\documentclass[moor,nonblindrev]{informs1} 



\usepackage{natbib}
 \NatBibNumeric
 \def\BIBand{and}%
 \bibpunct[, ]{[}{]}{,}{n}{}{,}%

\usepackage[colorlinks=true,breaklinks=true,bookmarks=true,urlcolor=blue,
     citecolor=blue,linkcolor=blue,bookmarksopen=false,draft=false]{hyperref}

\def\EMAIL#1{\href{mailto:#1}{#1}}  

\TheoremsNumberedThrough     

\EquationsNumberedThrough    

\usepackage{algorithm,algpseudocode}
\usepackage{booktabs} 
\usepackage{multicol, multirow} 
\usepackage{float} 
\usepackage{caption} 

\newcommand{\R}{\mathbb{R}}
\newcommand{\Rext}{\mathbb{R}\cup\{+\infty\}}
\newcommand{\Espace}{\mathbb{R}^p}
\newcommand{\Id}{\mathbb{I}}

\newcommand{\set}[1]{\left\{#1\right\}}
\newcommand{\sets}[1]{\{#1\}}
\newcommand{\iprods}[1]{\langle{ #1}\rangle}

\newcommand{\norms}[1]{\Vert#1\Vert}
\newcommand{\dom}[1]{\mathrm{dom}(#1)}
\newcommand{\Exp}[1]{\mathbb{E}\left[#1\right]}
\newcommand{\Exps}[2]{\mathbb{E}_{#1}\left[#2\right]}
\newcommand{\Prob}[1]{\mathbf{Prob}\left(#1\right)}
\newcommand{\zer}[1]{\mathrm{zer}(#1)}
\newcommand{\gra}[1]{\mathrm{gra}(#1)}
\newcommand{\Eproof}{\hfill $\square$}
\newcommand{\BigO}[1]{\mathcal{O}\left(#1\right)}
\newcommand{\BigOs}[1]{\mathcal{O}\big(#1\big)}
\newcommand{\SmallO}[1]{o\left(#1\right)}

\newcommand{\E}{\mathbb{E}}
\newcommand{\Sc}{\mathcal{S}}
\newcommand{\Lc}{\mathcal{L}}
\newcommand{\Qc}{\mathcal{Q}}
\newcommand{\Tc}{\mathcal{T}}
\newcommand{\Fc}{\mathcal{F}}
\newcommand{\Pc}{\mathcal{P}}
\newcommand{\Kc}{\mathcal{K}}
\newcommand{\Nc}{\mathcal{N}}
\newcommand{\Xc}{\mathcal{X}}

\newcommand{\mbf}[1]{\mathbf{#1}}

\newcommand{\beforesec}{\vspace{-1ex}}
\newcommand{\aftersec}{\vspace{-1ex}}
\newcommand{\beforesubsec}{\vspace{-1ex}}
\newcommand{\aftersubsec}{\vspace{-1ex}}
\newcommand{\beforesubsubsec}{\vspace{-1ex}}
\newcommand{\aftersubsubsec}{\vspace{-1ex}}

\newcommand{\revised}[1]{{\color{black}#1}}

\newcommand{\myeq}[2]{\vspace{-0.5ex}
\begin{equation}\label{#1}
{#2}
\vspace{-0.5ex}
\end{equation}
}
\newcommand{\myeqn}[1]{\vspace{-0.5ex}
\begin{equation*}
{#1}
\vspace{-0.5ex}
\end{equation*}
}


\begin{document}

\RUNAUTHOR{Quoc Tran-Dinh and Yang Luo}

\RUNTITLE{Randomized Block-Coordinate Optimistic Gradient Algorithms}

\TITLE{Randomized Block-Coordinate Optimistic Gradient Algorithms for Root-Finding Problems}

\ARTICLEAUTHORS{%
\AUTHOR{Quoc Tran-Dinh \textit{and} Yang Luo}
\AFF{Department of Statistics and Operations Research \\ The University of North Carolina at Chapel Hill (UNC), 318-Hanes Hall, Chapel Hill, NC27599-3260, USA.\\ 
\EMAIL{quoctd@email.unc.edu; yangluo@unc.edu}}
} 

\ABSTRACT{
In this paper, we develop two new randomized block-coordinate optimistic gradient algorithms to approximate a solution of nonlinear equations in large-scale settings, which are known as \textit{root-finding problems}.
Our first algorithm is non-accelerated with constant stepsizes, and achieves a $\mathcal{O}(1/k)$ best-iterate convergence rate on $\Exp{\norms{Gx^k}^2}$ when the underlying operator $G$ is Lipschitz continuous and possesses  a weak Minty solution, where $\Exp{\cdot}$ is the expectation and $k$ is the iteration counter.
Our second method is a new accelerated randomized block-coordinate optimistic gradient algorithm.
We establish both $\BigO{1/k^2}$ and $\SmallO{1/k^2}$ last-iterate convergence rates on both $\Exp{\norms{Gx^k}^2}$ and $\Exp{\norms{x^{k+1} - x^{k}}^2}$ for this algorithm under the co-coerciveness of $G$.
In addition, we prove that the iterate sequence  $\sets{x^k}$ converges to a solution almost surely and $\norms{Gx^k}$ attains a $\SmallO{1/k}$ almost sure convergence rate.
Then, we apply our methods to a class of large-scale finite-sum inclusions, which covers prominent applications in machine learning, statistical learning, and network optimization, especially in federated learning.
We obtain two new federated learning-type algorithms and their convergence rate guarantees for solving this problem class.
We test our algorithms on four numerical examples using both synthetic and real data, and compare them with related methods.
Our numerical experiments show some promising performance of the proposed methods against their competitors.
}

\KEYWORDS{
Randomized block-coordinate algorithm;  
optimistic gradient method;
Nesterov's accelerated method;
weak Minty solution;
co-coercive equation;
finite-sum inclusion.
}
\MSCCLASS{90C25, 90C06, 90-08}
\ORMSCLASS{Operations research; mathematical programming; game theory}

\maketitle

\vspace{-1ex}
\beforesec
\section{Introduction.}\label{sec:intro}
\aftersec
In this paper, we first develop two new randomized block-coordinate optimistic gradient-type algorithms to solve the following [non]linear equation: 
\myeq{eq:NE}{
\textrm{Find $x^{\star} \in \dom{G}$ such that:} \quad  Gx^{\star} = 0,
\tag{NE}
}
where $G : \Espace \to \Espace$ is a single-valued operator satisfying a given assumption: either Assumption~\ref{as:A1} in Section~\ref{sec:Na_RCOG_method} or Assumption~\ref{as:A2} in Section~\ref{sec:AcRCOG_method}, and $\dom{G}$ is the domain of $G$.

Next, we apply our methods to solve the following finite-sum [non]linear inclusion (FNI):
\myeq{eq:FedNI}{
\text{Find $x^{\star} \in \R^p$ such that:} \ 0 \in Gx^{\star} + Tx^{\star} \equiv \frac{1}{n} \sum_{i=1}^nG_ix^{\star} + Tx^{\star},
\tag{FNI}
}
where $T : \R^p  \rightrightarrows 2^{\R^p}$ is maximally monotone and $G_i : \R^p \rightrightarrows 2^{\R^p}$ ($\forall i\in [n]$) satisfy given conditions (possibly nonmonotone) specified later in Section~\ref{sec:FedNI_algorithms}.
If $n = 1$, then \eqref{eq:FedNI} is just a classical [non]linear inclusion \cite{Bauschke2011}.
Both \eqref{eq:NE} and \eqref{eq:FedNI} are called \textit{root-finding problems} in the literature.

Since we will focus on algorithmic development and convergence analysis, we assume throughout this paper that the solution sets $\zer{G} := \sets{x^{\star} \in \R^p : Gx^{\star} = 0}$ of \eqref{eq:NE} and  $\zer{G+T} := \sets{x^{\star} \in \R^p : 0 \in Gx^{\star} + Tx^{\star}}$ of \eqref{eq:FedNI} are nonempty. 

Though \eqref{eq:NE} and \eqref{eq:FedNI} look simple, it is well-known that \eqref{eq:NE} is equivalent to the problem of finding a fixed-point $x^{\star}$ of the operator $T := \Id - \eta G$, i.e. $x^{\star} = Tx^{\star}$, where $\Id$ is the identity operator, and $\eta > 0$ is any positive constant. 
Similarly, \eqref{eq:FedNI} also covers variational inequality problems as special cases (see Subsection~\ref{subsec:examples} for more concrete examples).
Therefore, \eqref{eq:NE} and \eqref{eq:FedNI} cope with  many fundamental problems across different fields by appropriately reformulating them into special cases of \eqref{eq:NE} or \eqref{eq:FedNI} (see, e.g., \cite{Bauschke2011,davis2022variance,reginaset2008,Facchinei2003,peng2016arock,phelps2009convex,Rockafellar2004,Rockafellar1976b,ryu2016primer}).
In addition, in this paper, we can handle a class of  nonmonotone operators $G$ (i.e. $G$ possesses a weak Minty solution), which allows us to tackle a  broader class of problems than recent works in \cite{davis2022variance,peng2016arock}.
Note that the finite-sum structure in \eqref{eq:FedNI} provides a unified template to model various optimization, game theory, and minimax problems in networks and distributed systems, statistical learning, and machine learning, including federated learning (FL), see, e.g., \cite{Bertsekas1989b,davis2022variance,mcmahan2021advances,Nedic2009,ryu2022large}.

\vspace{0.5ex}
\textbf{Motivation.}
This work is motivated by the following aspects.

\textit{Large-scale applications.}
We focus on settings where the operator $G$ in \eqref{eq:NE} resides in a high-dimensional space $\R^p$ ($p\gg 1$) making operations on full-dimensional vectors computationally intractable.
Such models often arise in modern applications of machine learning, statistics, and data science, especially when model parameters are matrices or even tensors \cite{Bottou2018,lan2020first,sra2012optimization}.
One common approach to tackle these problems is block-coordinate methods (BC), which iteratively update one or a small number of block-coordinates  instead of the full-dimensional vector or matrix of the model parameters. 
This approach is though very classical \cite{auslender1971methodes,Ortega2000}, it has garnered widespread attention in recent years in optimization, monotone inclusions, and fixed-point problems, see, e.g., \cite{beck2015cyclic,combettes2015stochastic,latafat2019block,Nesterov2012,nesterov2017efficiency,peng2016arock,richtarik2016parallel,wright2015coordinate}.
As a concrete example, \cite{hsieh2013big} illustrated that block-coordinate methods can solve large-scale inverse covariance estimation problems involving millions of variables (i.e. $p$ in a range of $10^{12}$).
Such a large-scale problem is very hard or prohibited   to handle by classical optimization techniques.
This aspect motivates us to develop block-coordinate methods for \eqref{eq:NE}.

\textit{Limitation of current BC methods.}
While deterministic methods for solving both \eqref{eq:NE} and \eqref{eq:FedNI} are well developed under both monotone and certain nonmonotone settings \cite{Bauschke2011,Combettes2005,Davis2015,Facchinei2003,Lions1979,malitsky2015projected,popov1980modification,tseng2000modified}, designing efficient block-coordinate variants of these methods for \eqref{eq:NE} and \eqref{eq:FedNI} remains \revised{limited}, especially when the underlying operators are nonmonotone, see \cite{combettes2018asynchronous,combettes2015stochastic,peng2016arock} as a few  examples.
Most existing works focus on special cases of \eqref{eq:NE} and \eqref{eq:FedNI} such as optimization, convex-concave minimax, and supervised learning problems, see e.g., \cite{beck2015cyclic,latafat2019block,nesterov2017efficiency,Nesterov2012,richtarik2016parallel,wright2015coordinate}.
Moreover, developing Nesterov's accelerated coordinate methods for  \eqref{eq:NE} and \eqref{eq:FedNI} is still open.

\textit{Limitation of convergence analysis.}
In deterministic methods for \eqref{eq:NE}, we often use the residual norm $\norms{Gx^k}$ to characterize an approximate solution $x^k$, making it difficult to adapt this analysis to randomized block-coordinate (RBC) schemes, especially in accelerated variants.
In fact, we can see from \cite{attouch2020convergence,yoon2021accelerated} that generalizing Nesterov's accelerated methods from convex optimization to root-finding problems  is not straightforward, especially for explicit methods such as forward or gradient-type schemes.
Apart from a momentum term, accelerated methods for \eqref{eq:NE} are often aided by a ``gradient'' correction term $Gx^k - Gx^{k-1}$, see \cite{tran2022connection} as an example.
Recent attempts on designing deterministic accelerated methods for monotone inclusions and  monotone VIPs have been made such as in \cite{attouch2020convergence,chen2017accelerated,kim2021accelerated,mainge2021fast,tran2021halpern}.
These algorithms often achieve faster convergence rates than their classical counterparts on an appropriate residual norm.
However, it remains unclear if one can still adapt the theoretical analysis of these algorithms to BC methods for \eqref{eq:NE} and \eqref{eq:FedNI}.

\textit{Better convergence guarantees.}
In terms of convergence rates, classical methods for solving \eqref{eq:NE} and \eqref{eq:FedNI} such as dual averaging, extragradient-type, and splitting schemes \revised{(including forward-backward and Douglas-Rachford splitting)} can achieve up to $\BigO{1/k}$  (or sometimes $\SmallO{1/k}$)  convergence rates on the squared residual norm or on a gap function under the monotonicity of $G$ or some special nonmonotone extensions, where $k$ is the iteration counter, see, e.g., \cite{bohm2022solving,Davis2015,diakonikolas2021efficient,Facchinei2003,Nemirovskii2004,Nesterov2007a,pethick2022escaping}.
In accelerated methods, these rates can be boosted up to $\BigO{1/k^2}$ (or faster, $\SmallO{1/k^2}$) under the monotonicity. 
The $\BigO{1/k^2}$ rate matches the lower bound of convergence rates in different settings, see \cite{kim2016optimized,Nesterov2004,ouyang2018lower}.
Moreover, a $\BigO{1/k^2}$ rate can be achieved if $G$ satisfies a co-hypomonotone condition as shown in \cite{lee2021fast,tran2023extragradient}.
Nevertheless, we are not aware of any accelerated RBC optimistic gradient-type method for \eqref{eq:NE} that achieves similar convergence rates as ours.
Note that the convergence of iterate sequences in accelerated RBC algorithms for \eqref{eq:NE} remains open.

\textit{Broader applications.}
As mentioned earlier, since \eqref{eq:NE} can be reformulated equivalently to a fixed-point problem, theory and solution methods from one field can be applied to another and vice versa.
This relation expands the applicable scope of our methods.
Note that the \textit{nonexpansiveness} in the fixed-point theory is equivalent to the co-coerciveness of $G$ in \eqref{eq:NE}, see \cite[Proposition 4.11]{Bauschke2011}.
Furthermore, both \eqref{eq:NE} and \eqref{eq:FedNI} can cover many common applications in scientific computing beyond optimization models as discussed in \cite{peng2016arock,ryu2016primer}. 
For instance, they can be specified to handle linear systems, [composite] smooth and nonsmooth  optimization, feasibility problems, decentralized optimization, federated learning, adversarial training and learning, and [distributionally] robust optimization  as discussed in Subsection \ref{subsec:examples}, see also \cite{Ben-Tal2009,madry2018towards,peng2016arock,ryu2016primer}.

\vspace{0.5ex}
\textbf{Our goal and  challenges.}
Our goal in this paper is to advance a recent development in variational inequality problems (VIPs) and apply it to design new randomized block-coordinate (RBC) schemes for solving \eqref{eq:NE} and \eqref{eq:FedNI}.
Our approach relies on Popov's past-extragradient method \cite{popov1980modification} (also equivalent to the optimistic gradient (OG) method from \cite{mokhtari2020unified} in our context).
We argue that developing RBC methods for \eqref{eq:NE} and \eqref{eq:FedNI} faces at least three main challenges.

First, unlike the gradient method in convex or nonconvex optimization, which guarantees to converge under only the Lipschitz continuity of the objective gradient, its counterpart, the forward or fixed-point scheme for \eqref{eq:NE} may not converge when $G$ is monotone and Lipschitz continuous, see, e.g., \cite{pethick2022escaping}.  
This issue prevents  us from adopting RBC methods for optimization to  \eqref{eq:NE}.

Second,  RBC methods for an optimization or a minimax problem often use its objective function to build a Lyapunov function for convergence analysis, while this object does not exist in \eqref{eq:NE}.
Hitherto, most convergence analysis of existing methods for \eqref{eq:NE} rely on a squared distance to solution $\norms{x^k - x^{\star}}^2$, a Bregman distance, or a gap or restricted gap function \cite{combettes2015stochastic,kotsalis2022simple,peng2016arock}. 
Nevertheless, it remains unclear if any of these metrics can play a similar role as the objective function in optimization to analyze the convergence of RBC methods for \eqref{eq:NE}.

Third, it is difficult to apply existing convergence analysis techniques from deterministic algorithms such as extragradient methods \cite{korpelevich1976extragradient} and Halpern's fixed-point iteration \cite{halpern1967fixed} to RBC variants for \eqref{eq:NE} due to the dependence on the operator value $Gx^{k+1}$ (but not on the iterate $x^{k+1}$ alone) of the underlying potential or Lyapunov function, where $x^{k+1}$ is random.
This dependence prevents us from passing the expectation operator $\E[\cdot]$ between $Gx^{k+1}$ and $x^{k+1}$.
In this paper, we aim at addressing these challenges by tackling two settings of \eqref{eq:NE} in the high-dimensional regime.

\vspace{0.5ex}
\textbf{Our contribution.}
Our main contribution in this paper can be summarized as follows.
\begin{itemize}
\item[(a)] First, we propose a new RBC optimistic gradient  scheme (called \ref{eq:naRCOG_scheme}) to approximate a solution of \eqref{eq:NE} when $G$ is Lipschitz continuous and possesses a weak Minty solution (\textit{cf.} Assumption~\ref{as:A1}) as in \cite{diakonikolas2021efficient}.
Under an appropriate condition on the problem parameters, we establish a $\BigO{1/k}$ best-iterate convergence rate on $\E[\norms{Gx^k}^2]$. 

\item[(b)] Second, we develop a novel accelerated RBC optimistic gradient  algorithm (called \ref{eq:ARCOG_scheme})  to solve \eqref{eq:NE}.
This method achieves both $\BigO{1/k^2}$ and $\SmallO{1/k^2}$ last-iterate convergence rates on $\E\big[ \norms{Gx^k}^2 \big]$ and $\E\big[\norms{x^{k+1} - x^k}^2\big]$ under the co-coerciveness of $G$ (\textit{cf.} Assumption~\ref{as:A2}).
In addition, we  establish that $\norms{Gx^k}$ attains a $\SmallO{1/k}$ almost sure convergence rate, and the iterate sequence $\sets{x^k}$ converges to $x^{\star}\in\zer{G}$ almost surely.
We also utilize a change of variable to derive a practical variant of \ref{eq:ARCOG_scheme}, which avoids full dimensional operations on the iterates. 

\item[(c)] Third, we apply our methods from (a) and (b) to solve \eqref{eq:FedNI} which forms a basic model of many supervised machine learning tasks, including federated learning \cite{konevcny2016federated,li2020federated,mcmahan_ramage_2017,mcmahan2021advances}. 
We obtain two new federated learning-type algorithms.
The first one achieves a $\BigO{1/k}$ convergence rate under the existence of a weak Minty-type solution (\emph{cf.} Assumption~\ref{as:A3}).
The second algorithm is an accelerated RBC variant of the Douglas-Rachford splitting method, which achieves both $\BigO{1/k^2}$ and $\SmallO{1/k^2}$ last-iterate convergence rates, as well as almost sure convergence results.
\end{itemize}

\vspace{0.5ex}
\textbf{Related work and comparison.}
Let us compare our contribution with the most related works.

First, our algorithms are very simple to implement due to their single loop and simple operations.
They also significantly differ from existing methods.
Indeed, one of the most related works to our methods is \cite{peng2016arock}, which extends the asynchronous RBC method to \eqref{eq:NE}.
Though this algorithm is asynchronous, it is non-accelerated, and therefore, in our context, achieves $\BigO{1/k}$ and at most $\SmallO{1/k}$ convergence rates on $\E[\norms{Gx^k}^2]$ under a quasi-co-coerciveness of $G$.
Moreover, the form of our algorithm is also different from  \cite{peng2016arock} due to a ``double forward'' step and a momentum term (see \ref{eq:ARCOG_scheme}), while achieving $\BigO{1/k^2}$ and $\SmallO{1/k^2}$ last-iterate convergence rates. 

Second, our algorithms are also different from the RBC fixed-point-type method in \cite{combettes2015stochastic}, which renders from fixed-point iteration $x^{k+1} := x^k - \eta Gx^k$ or its variants.
These methods use a stronger assumption (i.e. averaged operators \cite{Bauschke2011}) than Assumption~\ref{as:A1} in \ref{eq:naRCOG_scheme} and achieve asymptotically convergence results \textit{almost surely}.
Under Assumption~\ref{as:A1}, the method in \cite{combettes2015stochastic} does not have a convergence guarantee (or not known yet). 
Note that  \cite{combettes2015stochastic} uses equivalent assumptions as \ref{eq:ARCOG_scheme}, but it only has asymptotic convergence guarantees.
In contrast,  our \ref{eq:ARCOG_scheme} method achieves both $\BigO{1/k^2}$ and $\SmallO{1/k^2}$ convergence rates on $\E[\norms{Gx^k}^2]$,  a $\SmallO{1/k}$-\textit{almost sure} convergence rate on $\norms{x^{k+1}-x^k}$ and $\norms{Gx^k}$, and an almost sure convergence of  $\sets{x^k}$ to a solution of \eqref{eq:NE}.

Third, compared to a recent work \cite{kotsalis2022simple},  our non-accelerated scheme, \ref{eq:naRCOG_scheme}, has a similar form as Algorithm 3 in \cite{kotsalis2022simple}.
However,  \cite{kotsalis2022simple} requires two strong assumptions which exclude our class of problems covered by Assumption~\ref{as:A1}.
Theorem 4.2 in \cite{kotsalis2022simple} requires a quasi-strongly monotone condition (1.7), i.e. $\langle Gx, x - x^{\star}\rangle \geq \mu V(x, x^{\star})$ for all $x$, where $\mu > 0$ and $V$ is a given Bregman distance.
This is much stronger than Assumption~\ref{as:A1}, and is closely related to the strong monotonicity.
Alternatively, Theorem 4.4 in \cite{kotsalis2022simple} requires a monotone condition (2.28) to achieve a convergence guarantee on a gap function. 
Note that establishing a convergence rate via a gap function only works for the ``monotone'' case.
It is not applicable to the nonmonotone setting as in our Assumption~\ref{as:A1}. 
Therefore, the analysis in \cite{kotsalis2022simple} is not applicable to our \ref{eq:naRCOG_scheme} even when $\rho = 0$ in Assumption \ref{as:A1}.

Fourth, unlike methods for convex problems, convergence analysis of algorithms for nonlinear equations such as \eqref{eq:NE} fundamentally relies on an appropriate potential or Lyapunov function, which is often non-trivial  to construct.
Moreover, it is still unclear if some recent analysis techniques for VIPs and minimax problems, e.g., in \cite{diakonikolas2020halpern,lee2021fast,lieder2021convergence,yoon2021accelerated} can be extended to [randomized] BC variants. 
In this paper, we follow a different approach compared to those, including convergence analysis techniques. 
Our analysis is also different from works directly using dual averaging and mirror descent methods and gap function techniques for VIPs such as \cite{Nemirovskii2004,Nesterov2007a}, but our rates can be converted into gap function using a trick from \cite{diakonikolas2020halpern} provided that $\dom{G}$ is bounded.

Finally, our applications to the finite-sum inclusion \eqref{eq:FedNI} in Section~\ref{sec:FedNI_algorithms} are new compared to \cite{davis2022variance} since our problem setting is more general than that of \cite{davis2022variance} and covers also a nonmonotone case. 
In addition, the condition on the product $L\rho$ in Theorem~\ref{th:FedOG_convergence} of Algorithm~\ref{alg:A1} does not depend on the probability $\mbf{p}$ of choosing block coordinates as in Theorem~\ref{th:RCOG_convergence}.
Furthermore, our  Algorithm~\ref{alg:A2} for solving \eqref{eq:FedNI} relies on an accelerated RBC variant of the Douglas-Rachford splitting scheme instead of a forward-type method as in \cite{davis2022variance}.
Both proposed algorithms can be implemented in a [synchronous] federated learning fashion such as FedAvg \cite{mcmahan2017communication} and FedProx \cite{Li_MLSYS2020}, but they can overcome major challenges of FL (see Section~\ref{sec:FedNI_algorithms} for more details) and can solve more general problems than a majority of existing methods, including those in \cite{du2021fairness,Li_MLSYS2020,mcmahan2017communication,sharma2022federated,tarzanagh2022fednest}.

\vspace{0.5ex}
\textbf{Paper organization.}
The rest of this paper is organized as follows.
In Section~\ref{sec:background} we briefly review some background related to \eqref{eq:NE} and \eqref{eq:FedNI}, recall the optimistic gradient method and its variants, and present several relevant examples.
Section~\ref{sec:Na_RCOG_method} develops a new non-accelerated randomized block-coordinate optimistic gradient (RCOG) method to solve \eqref{eq:NE} and establishes its convergence.
Section~\ref{sec:AcRCOG_method} proposes a novel accelerated variant of our RCOG method and analyzes its convergence.
Section~\ref{sec:FedNI_algorithms} applies our algorithms to solve \eqref{eq:FedNI} in the context of federated learning and investigates their convergence.
Section~\ref{sec:num_examples} presents three numerical examples to verify our algorithms.
For the sake of presentation, most technical proofs are deferred to the appendix.

\vspace{0.5ex}
\textbf{Notations and terminologies.}
We work with a finite dimensional space $\Espace$ equipped with the standard inner product $\iprods{\cdot, \cdot}$ and Euclidean norm $\norms{\cdot}$.
For a multivalued mapping $G : \Espace \rightrightarrows 2^{\Espace}$, $\dom{G} = \set{x \in \Espace : Gx \not= \emptyset}$ denotes its domain, $\gra{G} = \set{(x, u) \in \dom{G} \times \Espace : u \in Gx}$ denotes its graph, where $2^{\Espace}$ is the set of all subsets of $\Espace$.
Furthermore, we assume that the variable $x$ of \eqref{eq:NE} is decomposed into $n$-blocks as $x = [x_1, x_2, \cdots, x_n]$, where $x_i \in \R^{p_i}$ is the $i$-th block subvector for $i\in [n] := \sets{1, 2, \cdots, n}$, $1 \leq n \leq p$, and $p_1 + \cdots + p_n = p$.
Given $G$ in \eqref{eq:NE}, we denote $[Gx]_i$ the $i$-th block coordinate of $Gx$ such that $Gx = [[Gx]_1, \cdots, [Gx]_n]$.
We also denote $G_{[i]}x = [0, \cdots, 0, [Gx]_i, 0, \cdots, 0]$ so that only the $i$-th block is presented and other blocks are zero.
For any vector $x = [x_1,\cdots, x_n] \in \R^p$, we define a block-weighted norm of $x$ as $\norms{x}_{\sigma} := \left( \sum_{i=1}^n\sigma_i\norms{x_i}^2 \right)^{1/2}$ associated with a weight vector $\sigma \in \R^n_{++}$ (the set of all positive real-valued vectors). We also use the conventions  $\sigma \circ x := [\sigma_1x_1, \cdots, \sigma_nx_n]$ and $\sigma \circ \norms{x}^2 := \sum_{i=1}^n\sigma_i\norms{x_i}^2$.
For a symmetric matrix $Q$, $\lambda_{\min}(Q)$ stands for the smallest eigenvalue of $Q$.
We denote $\Fc_k$ to be the smallest $\sigma$-algebra generated by $\sets{x^0, x^1, \cdots, x^k}$ collecting all the randomness up to the $k$-th iteration of our algorithms.
We also use $\Exps{k}{\cdot} := \E_{i_k}[ \cdot \mid \Fc_k ]$ for the conditional expectation and $\Exp{\cdot}$ for the total or full expectation.

\beforesec
\section{Background, Preliminary Results, and Examples.}\label{sec:background}
\aftersec
We first review necessary background on monotone operators and related concepts used in this paper.
Then, we recall the optimistic gradient method and its variants, including a Nesterov's accelerated variant.
Finally, we present the equivalence between \eqref{eq:NE} and \eqref{eq:FedNI} and some concrete examples of \eqref{eq:NE} and \eqref{eq:FedNI}.

\beforesubsec
\subsection{Monotone operators and related concepts.}
\aftersubsec
%
For a multivalued mapping $G : \Espace \rightrightarrows 2^{\Espace}$, we say that $G$ is monotone if $\iprods{u - v, x - y} \geq 0$ for all $(x, u), (y, v) \in \gra{G}$.
$G$ is said to be $\rho$-co-hypomonotone  if $\iprods{u - v, x - y} \geq -\rho\norms{u - v}^2$ for all $(x, u),  (y, v) \in \gra{G}$, where $\rho > 0$ is a given parameter.
Note that a $\rho$-co-hypomonotone operator is not necessarily monotone.
If $G$ is single-valued, then these conditions reduce to $\iprods{Gx - Gy, x - y} \geq 0$ and $\iprods{Gx - Gy, x - y} \geq -\rho\norms{Gx - Gy}^2$, respectively.
We say that $G$ is maximally [co-hypo]monotone if $\gra{G}$ is not properly contained in the graph of any other [co-hypo]monotone operator.

A single-valued operator $G$ is said to be $L$-Lipschitz continuous if $\norms{Gx - Gy} \leq L\norms{x - y}$ for all $x, y\in\dom{G}$, where $L \geq 0$ is a Lipschitz constant. 
If $L = 1$, then we say that $G$ is nonexpansive. 
We say that $G$ is $\beta$-co-coercive if $\iprods{u - v, x - y} \geq \beta\norms{u - v}^2$ for all $(x, u),  (y, v) \in\gra{G}$.
If $G$ is $\beta$-co-coercive, then by using the Cauchy-Schwarz inequality, $G$ is also monotone and $\frac{1}{\beta}$-Lipschitz continuous, but the reverse statement is not true in general.

The operator $J_Gx := \set{y \in \Espace : x \in y + Gy}$ is called the resolvent of $G$, often denoted by $J_Gx = (\Id + G)^{-1}x$, where $\Id$ is the identity mapping.
If $G$ is monotone, then $J_G$ is singled-valued, and if $G$ is maximally monotone, then $J_G$ is singled-valued and $\dom{J_G} = \Espace$.

\beforesubsec
\subsection{The optimistic gradient method and its variants.}\label{subsec:FRB_splitting}
\aftersubsec
Popov's past extragradient method in \cite{popov1980modification}  (also equivalent to the optimistic gradient \eqref{eq:OG_scheme}  scheme in \cite{mokhtari2020unified}) for solving \eqref{eq:NE} can be written as follows. 
Starting from $x^0 \in \dom{G}$, set $x^{-1} := x^0$, and for $k \geq 0$, we update
\myeq{eq:OG_scheme}{
x^{k+1} := x^k - \eta_k \big( Gx^k - \gamma_k  Gx^{k-1} \big),
\tag{OG}
}
where $\eta_k > 0$ and $\gamma_k \in (0, 1)$  are given parameters.
Though \ref{eq:OG_scheme} is a ``double forward'' scheme, it only requires one evaluation $Gx^k$ of $G$ per iteration.
If we denote $\hat{\eta}_k := \frac{\eta_k}{2}$ and choose $\gamma_k := \frac{1}{2}$, then \ref{eq:OG_scheme} becomes $x^{k+1} = x^k - \hat{\eta}_k(2Gx^k - Gx^{k-1})$, which exactly reduces to the forward-reflected-backward splitting method proposed in \cite{malitsky2020forward} for solving \eqref{eq:NE} (the backward step is identical) when $G$ is monotone and $L$-Lipschitz continuous, where $\hat{\eta}_k \in (0, \frac{1}{2L})$.
If $\hat{\eta}_k := \hat{\eta}$ is fixed, then \ref{eq:OG_scheme} can also be rewritten equivalently to $y^{k+1} = y^k - \hat{\eta} G(2y^k - y^{k-1})$ with $y^k := x^k + \hat{\eta} Gx^{k-1}$, which is known as the reflected gradient method in \cite{malitsky2015projected}.
The \ref{eq:OG_scheme} scheme can be rewritten as $x^{k+1} = x^k - \alpha_k Gx^k - \gamma_k\eta_k(Gx^k - Gx^{k-1})$, where $\alpha_k := \eta_k(1-\gamma_k)  > 0$.
This form can be viewed as a gradient/forward method with a ``gradient'' correction term $\gamma_k\eta_k(Gx^k - Gx^{k-1})$ to solve \eqref{eq:NE}.

We can modify \ref{eq:OG_scheme} by adding a momentum term $\theta_k(x^k - x^{k-1})$ to obtain the following momentum OG (or accelerated OG) method to solve \eqref{eq:NE}:
\myeq{eq:AOG_scheme}{
x^{k+1} := x^k + \theta_k(x^k - x^{k-1}) - \eta_k\big(Gx^k - \gamma_kGx^{k-1} \big),
\tag{AOG}
}
where $\theta_k \geq 0$ is a given parameter.
If we  additionally introduce $y^{k+1} := x^k - \alpha_kGx^k$, then we can equivalently transform \ref{eq:AOG_scheme} into the following form:
\myeq{eq:NesOG_scheme}{
\arraycolsep=0.2em
\left\{\begin{array}{lcl}
y^{k+1} &:= & x^k - \alpha_kGx^k, \vspace{1ex}\\
x^{k+1} & := & y^{k+1} + \theta_k(y^{k+1} - y^k) + \nu_k(x^k - y^{k+1}), 
\end{array}\right.
\tag{NesOG}
}
where $\alpha_k := \frac{\gamma_{k+1}\eta_{k+1}}{\theta_{k+1}}$ and $\nu_k := 1 + \theta_k - \frac{\theta_{k+1}\eta_k}{\gamma_{k+1}\eta_{k+1}}$.

If $\nu_k = 0$, then \ref{eq:NesOG_scheme} is identical to Nesterov's accelerated methods in convex optimization with $G(\cdot) = \nabla{f}(\cdot)$, see  \cite{Nesterov1983,Nesterov2004}.
However, as discussed in \cite{attouch2020convergence,mainge2021accelerated}, the term $\nu_k(x^k - y^{k+1})$ plays a crucial role to establish  the convergence of accelerated methods for solving \eqref{eq:NE}.

\beforesubsec
\subsection{The equivalence between \eqref{eq:NE} and \eqref{eq:FedNI}.}\label{subsec:NEvsNI}
\aftersubsec
It is clear that \eqref{eq:FedNI} covers \eqref{eq:NE} as a special case when $T = 0$.
However, under appropriate assumptions, \eqref{eq:FedNI} can equivalently be reformulated into \eqref{eq:NE}.
Let us discuss some common reformations below.

The first common equivalent form of \eqref{eq:FedNI} is the resolvent equation $J_{\lambda(G+T)}x = 0$, where $J_{\lambda(G+T)} := (\Id + \lambda(G+T))^{-1}$ is the resolvent of $\lambda(G+T)$ for any $\lambda > 0$.
This reformulation is used to develop proximal-point methods for solving \eqref{eq:FedNI}.
However, evaluating the resolvent $J_{\lambda(G+T)}$ is usually challenging in most cases, limiting the applicability of  proximal-point methods in practice.

Next, if $G$ is $\beta$-co-coercive, then finding a solution $x^{\star}$ of \eqref{eq:FedNI}  is equivalent to solving
\myeq{eq:FBS_operator}{
F_{\lambda}x^{\star} = 0, \qquad \text{where} \quad F_{\lambda}x :=  \lambda^{-1}(x - J_{\lambda T}(x - \lambda Gx)) \quad \text{for any} \quad \lambda > 0.
}
The operator $F_{\lambda}$ is called the forward-backward splitting (FBS) residual. 
It is well-known \cite{Bauschke2011} that $F_{\lambda}$ is $\frac{\lambda(4\beta - \lambda)}{4\beta}$-co-coercive, provided that $0 < \lambda < 4\beta$.
Hence, $F_{\lambda}$ fulfills Assumption~\ref{as:A2} below.
Alternatively, we can also reformulate \eqref{eq:FedNI} equivalently to $H_{\lambda}y^{\star} = 0$, where $H_{\lambda}(\cdot) := G ( J_{\lambda T}(\cdot)) + \lambda^{-1}(\Id - J_{\lambda T})(\cdot)$ is the backward-forward splitting (BFS) operator (see Proposition 2.4. in \cite{attouch2018backward}).
Then, $x^{\star}$ solves \eqref{eq:FedNI} iff $x^{\star} := J_{\lambda T}y^{\star}$ solves  $H_{\lambda}y^{\star} = 0$.
This reformulation is very useful to develop efficient stochastic algorithms for solving the finite-sum setting \eqref{eq:FedNI}, see \cite{davis2022variance} as an example.

If both $G$ and $T$ are maximally monotone, then we can reformulate \eqref{eq:FedNI} equivalently to 
\myeq{eq:DRS_operator}{
V_{\beta}u^{\star} = 0, \qquad \text{where} \quad V_{\beta}u := \beta^{-1}(J_{\beta T}u - J_{\beta T} (2J_{\beta G}u - u)) \quad \text{for any} \quad \beta > 0.
}
The operator $V_{\beta}$ is called the Douglas-Rachford splitting (DRS) residual, which is $\beta$-co-coercive.
Note that $u^{\star}$ solves \eqref{eq:DRS_operator} iff  $x^{\star} := J_{\beta T}u^{\star}$ solves \eqref{eq:FedNI}. 
Furthermore, if $T := C + \hat{T}$, where $\hat{T}$ is maximally monotone and $C$ is $\beta$-co-coercive, then we can reformulate \eqref{eq:FedNI} equivalently to \eqref{eq:NE} using  a three-operator splitting residual in \cite{Davis2015}.

Finally, if $G$ is possibly nonmonotone, e.g., $G$ satisfies a variant of Assumption~\ref{as:A1} below, then we can reformulate \eqref{eq:FedNI} equivalently to \eqref{eq:NE} using Tseng's forward-backward-forward splitting (FBFS) residual from \cite{tseng2000modified}.
We refer to Subsection~\ref{subsec:FOG} below for more details.

\beforesubsec
\subsection{Examples.}\label{subsec:examples}
\aftersubsec
Both settings \eqref{eq:NE} and \eqref{eq:FedNI} cover many fundamental problems in practice.
Let us recall some important special cases of these problems here.
We also refer to \cite{davis2022variance,peng2016arock,ryu2016primer} for further details and additional examples.

\beforesubsubsec
\subsubsection{Optimization problems.}\label{subsubsec:opt_probs}
\aftersubsubsec
Consider the following composite optimization problem:
\myeq{eq:opt_prob}{
\min_{x \in \R^p} \big\{ \phi(x) := f(x) + g(x) \big\},
}
where $f : \R^p \to \Rext$ is given and $g : \R^p\to\Rext$ is proper, closed, and convex.
Problem \eqref{eq:opt_prob} covers many applications such as linear and nonlinear least-squares, logistic regression, optimization over networks, and distributed optimization \cite{Bauschke2011,Bottou2018,Combettes2011,combettes2011proximal,sra2012optimization,wright2017optimization}.  

As a concrete example, if $f(x) := \frac{1}{n} \sum_{i=1}^n\ell(\iprods{Z_i, x}; y_i)$ represents an empirical  loss associated with a dataset $\sets{(Z_i, y_i)}_{i=1}^n$ and $g(x) := \tau R(x)$ is a regularizer to promote desired structures of $x$ (e.g., sparsity or low-rankness), then \eqref{eq:opt_prob} covers many learning problems such as linear regression with $\ell(\tau,s) := \frac{(\tau - s)^2}{2}$, logistic regression with $\ell(\tau, s) := \log(1 + \exp(-s\tau))$), and soft-margin support vector machine (SVM) with the hinge loss $\ell(\tau,s) := \max\sets{0, 1-\tau s}$.  

Let us recall $\partial{f}$ and $\partial{g}$ as the subdifferentials or gradients of $f$ and $g$, respectively.
Then, under appropriate regularity assumptions \cite{Bauschke2011,Rockafellar2004}, the optimality condition of \eqref{eq:opt_prob} is
\myeq{eq:opt_cond_of_opt_prob}{
0 \in \partial{f}(x^{\star}) + \partial{g}(x^{\star}).
}
If \eqref{eq:opt_prob} is convex, then $x^{\star}$ solves \eqref{eq:opt_cond_of_opt_prob} iff it solves \eqref{eq:opt_prob}.
Clearly, \eqref{eq:opt_prob} is a special case of \eqref{eq:FedNI} with $G := \partial{f}$ and $T := \partial{g}$.
If $g = \delta_{\Xc}$, the indicator of a nonempty and closed set $\Xc$, then \eqref{eq:opt_prob} also covers constrained optimization problems.
If $g = 0$ and $f$ is differentiable, then \eqref{eq:opt_cond_of_opt_prob} reduces to $\nabla{f}(x^{\star}) = 0$, which is a special case of \eqref{eq:NE} with $G := \nabla{f}$.
Our methods developed in this paper will reduce to new randomized block-coordinate gradient-type algorithms for solving \eqref{eq:opt_prob}.

\beforesubsubsec
\subsubsection{Minimax problems.}\label{subsubsec:minimax_probs}
\aftersubsubsec
Consider the following minimax problem:
\myeq{eq:minimax_prob}{
\min_{u\in\R^{p_1}}\max_{v \in \R^{p_2}}\Big\{ \Lc(u, v) := f(u) + \Psi(u, v) - g(v) \Big\},
}
where $f : \R^{p_1} \to\Rext$ and $g : \R^{p_2}\to\Rext$ are proper, closed, and convex, and $\Psi : \R^{p_1}\times\R^{p_2}\to\R$ is a given joint objective function.
This minimax problem is a central model in robust and distributionally robust optimization \cite{Ben-Tal2001,rahimian2019distributionally,namkoong2016stochastic}, two-person games \cite{ho2022game,kuhn1996work}, fairness machine learning \cite{du2021fairness,martinez2020minimax}, and generative adversarial networks (GANs) \cite{arjovsky2017wasserstein,daskalakis2018training,goodfellow2014generative}, among others.

As a concrete example, if $f(u) := \delta_{\Delta_{p_1}}(u)$ and $g(v) := \delta_{\Delta_{p_2}}(v)$ are the indicators of the standard simplexes $\Delta_{p_1}$ and $\Delta_{p_2}$, respectively, and $\Psi(u, v) := \iprods{Au, v}$ is a bilinear form, where $A$ is a pay-off matrix, then \eqref{eq:minimax_prob} covers the classical bilinear game problem.
Another common example is the following non-probabilistic robust optimization model obtained from  Wald's minimax model: $\min_{u \in \R^{p_1}}\big\{ \phi(u) := \psi(u) + f(u) \equiv \max_{ v \in \mathcal{V} } \Psi(u, v) + f(u) \big\}$, where $u$ is a decision variable, $v$ represents an uncertainty vector on $\mathcal{V}\subset\R^{p_2}$, and the objective term $\psi(u) := \max_{ v \in \mathcal{V} } \Psi(u, v)$ is the worst-case risk over all possible candidates $v$ in $\mathcal{V}$ (see \cite{Ben-Tal2001} for concrete instances of this model).

Again, under appropriate regularity conditions  \cite{Bauschke2011,Rockafellar2004}, the optimality condition of \eqref{eq:minimax_prob} becomes
\myeq{eq:minimax_opt_cond}{
0 \in \begin{bmatrix} \partial_u{\Psi}(u^{\star}, v^{\star})  \\  -\partial_v{\Psi}(u^{\star}, v^{\star}) \end{bmatrix} + \begin{bmatrix} \partial{f}(u^{\star}) \\   \partial{g}(v^{\star}) \end{bmatrix}.
}
If $\Psi$ is convex-concave, then \eqref{eq:minimax_opt_cond} is necessary and sufficient for $(u^{\star}, v^{\star})$ to be an optimal solution of \eqref{eq:minimax_prob}.
Otherwise, it is only a necessary condition.
If $f$ and $g$ are the indicators of given convex sets, then \eqref{eq:minimax_prob} also covers constrained minimax problems.
Clearly, if we define $x := [u, v]$, $G := [\partial_u{\Psi}, -\partial_v{\Psi}]$, and $T := [\partial{f}, \partial{g}]$, then \eqref{eq:minimax_opt_cond} can be viewed as a special case of \eqref{eq:FedNI}.
If $f = 0$ and $g = 0$, then the optimality condition \eqref{eq:minimax_opt_cond} reduces to $Gx^{\star} = 0$, a special case of \eqref{eq:NE}.

\beforesubsubsec
\subsubsection{Variational Inequality Problems (VIP).}\label{subsubsec:VIP}
\aftersubsubsec
If $T = \Nc_{\Xc}$, the normal cone of a nonempty, closed, and convex set $\Xc$, then \eqref{eq:FedNI} reduces to 
\myeq{eq:VIP}{
\textrm{Find $x^{\star} \in \Xc$ such that:} \   \iprods{Gx^{\star}, x - x^{\star}} \geq 0, \  \textrm{for all} \ x \in \Xc.
\tag{VIP}
}
More generally, if $T = \partial{g}$, the subdifferential of a convex function $g$, then \eqref{eq:FedNI} reduces to a mixed VIP.
The formulation \eqref{eq:VIP} covers many well-known problems in practice, including unconstrained and constrained optimization, minimax problems, complementarity problems, and Nash's equilibria, see also \cite{Facchinei2003,Konnov2001} for concrete examples in traffic networks and economics.

\beforesubsubsec
\subsubsection{Fixed-point problems.}\label{subsubsec:fixed_point}
\aftersubsubsec
The fixed-point problem is fundamental in computational mathematics, which has various applications in numerical analysis, ordinary and partial differential equations, engineering, and physics \cite{agarwal2001fixed,Bauschke2011,Combettes2011a}. 
The classical fixed-point problem is written as
\myeq{eq:fixed_point}{
\textrm{Find $x^{\star} \in \R^p$ such that:} \quad  x^{\star} = Fx^{\star},
\tag{FP}
}
where $F : \R^p \to \R^p$ is a given mapping.
This problem is equivalent to \eqref{eq:NE} with $G := \Id - F$, where $\Id$ is the identity mapping.
Our algorithms developed in this paper for \eqref{eq:NE} can be applied to solve certain classes of \eqref{eq:fixed_point}.
We can also generalize \eqref{eq:fixed_point} to a multivalued mapping fixed-point problem $x^{\star} \in Fx^{\star}$ (known as Kakutani's fixed-point problem), which is also equivalent to \eqref{eq:FedNI}.

\beforesec
\section{The Non-Accelerated Randomized Block-Coordinate OG Method.}\label{sec:Na_RCOG_method}
\aftersec
In this section, we develop a new randomized block-coordinate optimistic gradient (RCOG) method to solve \eqref{eq:NE} that satisfies the following assumption.

\begin{assumption}\label{as:A1}
The operator $G$ in \eqref{eq:NE}  is $L_i$-block coordinate-wise Lipschitz continuous, i.e. there exists $L := (L_1,\cdots, L_n) \in [0, +\infty)^n$  such that for any $x, y \in \dom{G}$:
\myeq{eq:CE_assumption_A1_a}{
\norms{ [Gx]_i - [Gy]_i }  \leq   L_i \norms{ x_i - y_i }, \quad \text{for all $i \in [n]$}. 
}
\textbf{Weak-Minty solution:} There exist $\rho := (\rho_1, \cdots, \rho_n) \in [0,  L]$ and $x^{\star}\in\zer{G}$ of \eqref{eq:NE} such that:
\myeq{eq:CE_assumption_A1_b}{
\begin{array}{ll}
\iprods{Gx, x - x^{\star}}  \geq - \rho \circ \norms{Gx}^2, \quad \text{for all $x \in \dom{G}$}.
\end{array}
}
\end{assumption}

The expression \eqref{eq:CE_assumption_A1_a} shows that $G$ is block-coordinate-wise Lipschitz continuous.
It also implies that $G$ is $\bar{L}$-Lipschitz continuous with $\bar{L} := (\sum_{i=1}^nL_i^2)^{1/2}$.
The Lipschitz continuity of $G$ is required in most numerical methods for solving \eqref{eq:NE} and its generalizations, see, e.g., \cite{Facchinei2003}. 

The condition \eqref{eq:CE_assumption_A1_b} is also referred to as a weak Minty solution of \eqref{eq:NE}, see \cite{diakonikolas2021efficient} for more details and examples.
If $\rho_i = \hat{\rho} \geq 0$ for all $i \in [n]$, then it exactly reduces to the weak Minty condition $\iprods{Gx, x - x^{\star}} \geq -\hat{\rho}\norms{Gx}^2$ used in many recent works, including \cite{bohm2022solving,diakonikolas2021efficient,pethick2022escaping}.
It is weaker than the $\hat{\rho}$-co-hypomonotonicity of $G$  \cite{lee2021fast,tran2023extragradient}.
We can refer to \eqref{eq:CE_assumption_A1_b} as a $\hat{\rho}$-``star'' co-hypomonotonicity of $G$.
Note that under condition \eqref{eq:CE_assumption_A1_b}, $G$ is not necessarily monotone.
If $\rho = 0$, then $G$ is star-monotone, i.e. $\iprods{Gx, x - x^{\star}} \geq 0$ for all $x \in \dom{G}$.
As a concrete example, if we consider a linear mapping $Gx := \mbf{G}x + \mbf{g}$ for an invertible symmetric matrix $\mbf{G}$ with $\lambda_{\min}(\mbf{G}^{-1}) < 0$, and a given $\mbf{g}$, then $G$ is not monotone, but $\hat{\rho}$-co-hypomonotone with $\hat{\rho} := -\lambda_{\min}(\mbf{G}^{-1}) > 0$. Hence, $G$ satisfies \eqref{eq:CE_assumption_A1_b}.

In contrast to \eqref{eq:CE_assumption_A1_a}, many works such as  \cite{kotsalis2022simple,peng2016arock} rely on a quasi-strongly monotone condition $\iprods{Gx, x - x^{\star}}  \geq \mu \norms{x - x^{\star}}^2$ for $x \in \dom{G}$, where $\mu > 0$.
This assumption is slightly weaker than the strong monotonicity of $G$, but significantly different from \eqref{eq:CE_assumption_A1_b} due to different right-hand side.
Note that the quasi-strong monotonicity of $G$ also implies the error bound condition $\norms{x - x^{\star}} \leq \mu^{-1} \norms{Gx}$ as in \cite{luo1993error}.
However, error bound conditions are often imposed locally to establish a linear convergence of  optimization methods, see, e.g., \cite{drusvyatskiy2018error} for further discussion.

\beforesubsec
\subsection{The derivation of algorithm.}\label{subsec:RCOG_development}
\aftersubsec
Since our method is randomized, we introduce a random variable $i_k$  on $[n] := \sets{1, 2, \cdots, n}$ to select the $i$-th block coordinate with the following probability:
\myeq{eq:prob_cond1}{
\Prob{i_k = i} = \mbf{p}_i, \quad \text{for all}~i \in [n],
}
where $\mbf{p}_i > 0$ for all $i\in [n]$ and $\sum_{i=1}^n\mbf{p}_i = 1$ is a given probability distribution.
We also denote $\mbf{p}_{\min} := \min_{i\in [n]} \mbf{p}_i > 0$.
If $\mbf{p}_i = \frac{1}{n}$, then $i_k$ is a uniform random variable.
Otherwise, we also cover non-uniform randomized block-coordinate methods.

Our \textbf{\textit{randomized block-coordinate optimistic gradient}} (RCOG) method is described as follows.
\textit{Given an initial point $x^0 \in \dom{G}$, we set $x^{-1} := x^0$, and at each iteration $k \geq 0$, we randomly sample an i.i.d. block-coordinate $i_k \in [n]$ satisfying \eqref{eq:prob_cond1}, and update:
\myeq{eq:naRCOG_scheme}{
x^{k+1} := x^k -  \tfrac{\eta_{i_k} }{\mbf{p}_{i_k}}  \left( G_{[i_k]}x^k - \gamma_{i_k}  G_{[i_k]}x^{k-1} \right),
\tag{RCOG}
}
where  $\eta_{i_k} > 0$ and $\gamma_{i_k} \in (0, 1)$ are given parameters corresponding to the $i_k$-th block coordinate $($determined later$)$, and $G_{[i]}x = [\mbf{0}, \mbf{0}, \cdots, \mbf{0}, [Gx]_i, \mbf{0}, \cdots, \mbf{0}]$ such that only the $i$th-block is $[Gx]_i$, and other blocks are zero.}

If $n = 1$ (i.e. one block), then \ref{eq:naRCOG_scheme} reduces to \ref{eq:OG_scheme} in the deterministic case.
For each block-coordinate $i$, we use different constant parameters $\eta_i$ and $\gamma_i$ at all iterations.
In terms of algorithmic form, \ref{eq:naRCOG_scheme} uses two block-coordinate evaluations $[Gx^k]_{i_k}$ and $[Gx^{k-1}]_{i_k}$ of $G$ evaluated at the two consecutive iterates $x^k$ and $x^{k-1}$, respectively.

\beforesubsec
\subsection{Lyapunov function and descent lemma.}\label{subsec:Na_RCOG}
\aftersubsec
We introduce a new Lyapunov function:
\begin{equation}\label{eq:RBC_PEG_potential}
\Pc_k := \norms{x^k  - x^{\star} + \gamma\eta  \circ Gx^{k-1} }^2 +  \norms{x^k - x^{k-1}}_{\sigma}^2,
\end{equation} 
where $\sigma := (\sigma_1,\cdots, \sigma_n) \in \R^n_{++}$ is a given weight vector. 
Then, the following lemma provides a key descent property to analyze the convergence of \ref{eq:naRCOG_scheme}, whose proof is in Appendix~\ref{apdx:subsec:le:RCOG_descent}.

\begin{lemma}\label{le:RCOG_descent}
Suppose that $\zer{G} \neq\emptyset$ and Assumption~\ref{as:A1} holds.
Let $\sets{x^k}$ be generated by \ref{eq:naRCOG_scheme} to solve \eqref{eq:NE} and $\Pc_k$ be defined by \eqref{eq:RBC_PEG_potential}.
If $L_i^2\gamma_i^2\eta_i^2 \big(  2 - \mbf{p}_i  + 2\sigma_i \big)  \leq \mbf{p}_i \sigma_i$ for all $i \in [n]$, then 
\myeq{eq:RCOG_descent}{
\arraycolsep=0.2em
\begin{array}{lcl}
\E_k\big[\Pc_{k+1}\big] &\leq & \Pc_k   -    \sum_{i=1}^n 2\gamma_i\eta_i(1 - \gamma_i)\big[ \eta_i  - \frac{\rho_i}{\gamma_i}  -  \frac{\eta_i(1 - \gamma_i)(1 + \sigma_i) }{\gamma_i\mbf{p}_i} \big] \cdot \norms{ [Gx^k]_i}^2.
\end{array}
}
\end{lemma}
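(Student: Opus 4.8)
The plan is to carry out a one-step analysis of the Lyapunov function $\Pc_k$ by expanding it around the shifted iterate $u^k := x^k + \omega\gamma Gx^{k-1}$, so that $\Pc_k = \norms{u^k - x^{\star}}^2 + \norms{x^k - x^{k-1}}_{\sigma}^2$. First I would use the exact identity $x^{k+1} + \omega\gamma Gx^k - x^{\star} = a + b$ with $a := u^k - x^{\star}$ (which is $\Fc_k$-measurable) and $b := (x^{k+1} - x^k) + \omega\gamma(Gx^k - Gx^{k-1})$, and write $\norms{a+b}^2 = \norms{a}^2 + 2\iprods{a,b} + \norms{b}^2$. Since \ref{eq:naRCOG_scheme} updates only block $i_k$, the increment $v^k := x^{k+1} - x^k$ is supported on that single block, which makes the conditional expectations of $\norms{v^k}^2$, $\norms{v^k}_{\sigma}^2$, and $\iprods{v^k, Gx^k - Gx^{k-1}}$ easy to compute: they become $\omega^2\sum_i \mbf{p}_i^{-1}\norms{\eta[Gx^k]_i - \gamma[Gx^{k-1}]_i}^2$, the same with $\sigma_i$ inserted, and $-\omega\iprods{\eta Gx^k - \gamma Gx^{k-1}, Gx^k - Gx^{k-1}}$, respectively.

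Next I would take $\E_k[\cdot]$ throughout. Using $\E_k[v^k] = -\omega(\eta Gx^k - \gamma Gx^{k-1})$ one gets $\E_k[b] = -\omega(\eta-\gamma)Gx^k$, hence $\E_k[\iprods{a,b}] = -\omega(\eta-\gamma)\iprods{Gx^k,\, x^k + \omega\gamma Gx^{k-1} - x^{\star}}$, and the weak Minty condition \eqref{eq:CE_assumption_A1_b} bounds $\iprods{Gx^k, x^k - x^{\star}} \geq -\rho\norms{Gx^k}^2$. Assembling the cross terms, I expect the crucial cancellation: after rewriting $\eta Gx^k - \gamma Gx^{k-1} = (\eta-\gamma)Gx^k + \gamma(Gx^k - Gx^{k-1})$, the two contributions $-2\omega^2(\eta-\gamma)\gamma\iprods{Gx^{k-1}, Gx^k}$ (from $2\E_k[\iprods{a,b}]$) and $-2\omega^2(\eta-\gamma)\gamma\iprods{Gx^k, Gx^k - Gx^{k-1}}$ (from $\E_k[\norms{b}^2]$) collapse to $-2\omega^2(\eta-\gamma)\gamma\norms{Gx^k}^2$, while the $\norms{Gx^k - Gx^{k-1}}^2$-pieces combine to a single $-\omega^2\gamma^2\norms{Gx^k - Gx^{k-1}}^2$. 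Bounding $-\norms{x^k - x^{k-1}}_{\sigma}^2 \leq -\sum_i \sigma_i L_i^{-2}\norms{[Gx^k]_i - [Gx^{k-1}]_i}^2$ via \eqref{eq:CE_assumption_A1_a}, this leaves, block by block, a bound of the shape $\sum_i\big[2\omega(\eta-\gamma)(\rho - \omega\gamma)\norms{[Gx^k]_i}^2 + \omega^2\mbf{p}_i^{-1}(1+\sigma_i)\norms{\eta[Gx^k]_i - \gamma[Gx^{k-1}]_i}^2 - (\omega^2\gamma^2 + \sigma_i/L_i^2)\norms{[Gx^k]_i - [Gx^{k-1}]_i}^2\big]$.

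To finish, I would apply $\norms{u+v}^2 \leq 2\norms{u}^2 + 2\norms{v}^2$ to $\eta[Gx^k]_i - \gamma[Gx^{k-1}]_i = (\eta-\gamma)[Gx^k]_i + \gamma([Gx^k]_i - [Gx^{k-1}]_i)$, so that the middle term is at most $2\omega^2\mbf{p}_i^{-1}(1+\sigma_i)\big[(\eta-\gamma)^2\norms{[Gx^k]_i}^2 + \gamma^2\norms{[Gx^k]_i - [Gx^{k-1}]_i}^2\big]$. The coefficient of $\norms{[Gx^k]_i - [Gx^{k-1}]_i}^2$ then equals $\tfrac{1}{\mbf{p}_iL_i^2}\big(\omega^2\gamma^2 L_i^2[2(1+\sigma_i) - \mbf{p}_i] - \sigma_i\mbf{p}_i\big)$, which is $\leq 0$ exactly under the stated hypothesis (the case $L_i = 0$ being trivial, since then $[Gx^k]_i = [Gx^{k-1}]_i$); discarding this nonpositive term leaves precisely $-2\omega(\eta-\gamma)\sum_i\big(\omega\gamma - \rho - \omega(\eta-\gamma)(1+\sigma_i)/\mbf{p}_i\big)\norms{[Gx^k]_i}^2$, which is \eqref{eq:RCOG_descent}.

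The main obstacle I anticipate is bookkeeping rather than a single hard estimate: one must keep straight which quantities are $\Fc_k$-measurable and which are random and block-supported, since $\omega\gamma(Gx^k - Gx^{k-1})$ inside $b$ is a full deterministic vector whereas $x^{k+1}-x^k$ is a single random block, so the mixed inner products must be reduced to full-vector norms before \eqref{eq:CE_assumption_A1_b} can be invoked. The other delicate point is spotting the regrouping $\eta G - \gamma G' = (\eta-\gamma)G + \gamma(G-G')$, which is used both on full vectors (to produce the telescoping cancellation) and block-wise (for the final Young step), and which is exactly what ties the resulting inequality to the parameter condition $\omega^2\gamma^2[2(1+\sigma_i)-\mbf{p}_i]L_i^2 \leq \sigma_i\mbf{p}_i$.
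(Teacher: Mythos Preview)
Your proposal is correct and follows essentially the same route as the paper: expand the shifted square norm $\norms{x^{k+1}+\omega\gamma Gx^k-x^{\star}}^2$, take conditional expectation using the block-supported structure of $x^{k+1}-x^k$, invoke the weak Minty condition, apply Young's inequality to $\norms{\eta[Gx^k]_i-\gamma[Gx^{k-1}]_i}^2$, and finally use the block Lipschitz bound to absorb $\norms{x^k-x^{k-1}}_\sigma^2$. Your $a+b$ grouping is a slightly cleaner way to organize the same cancellation that the paper obtains via the polarization identity $2\iprods{Gx^k,Gx^{k-1}}=\norms{Gx^k}^2+\norms{Gx^{k-1}}^2-\norms{Gx^k-Gx^{k-1}}^2$, but the two computations coincide term by term.
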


\beforesubsec
\subsection{Main result 1: Convergence of \ref{eq:naRCOG_scheme}.}\label{subsec:main_result1}
\aftersubsec
Utilizing Lemma~\ref{le:RCOG_descent}, we prove our first main result in the following theorem.

\begin{theorem}\label{th:RCOG_convergence}
Suppose that $\zer{G} \neq\emptyset$ and Assumption~\ref{as:A1} holds for \eqref{eq:NE}.
Let  $\Pc_k$ be defined by \eqref{eq:RBC_PEG_potential} for some $\sigma \in \R^n_{++}$ and $\gamma_i$ is chosen such that  $\frac{1+ \sigma_i}{1+\sigma_i + \mbf{p}_i} < \gamma_i < 1$ for $i \in [n]$.
Suppose that 
\myeq{eq:stepsize_cond1}{
L_i\rho_i < \kappa_i\sqrt{\mbf{p}_i}\left[ 1 -  \tfrac{(1+\sigma_i)(1-\gamma_i)}{\gamma_i\mbf{p}_i } \right] \quad \text{and} \quad 0 < \eta_i \leq \tfrac{\kappa_i\sqrt{\mbf{p}_i}}{\gamma_i L_i},
}
for all $i \in [n]$, where $\kappa_i^2 :=   \frac{\sigma_i}{2(1 + \sigma_i)}$.
Then, the following estimate holds:
\myeq{eq:RCOG_descent1}{
\arraycolsep=0.2em
\begin{array}{lcl}
\E\big[\Pc_{k+1}\big] &\leq & \E\big[ \Pc_k \big]  -   \psi \circ \E\big[ \norms{ Gx^k }^2 \big], 
\end{array}
}
where $\psi_i := 2 \gamma_i\eta_i(1 - \gamma_i)\big[ \eta_i - \frac{\rho_i}{\gamma_i} - \frac{\eta_i(1 + \sigma_i)(1-\gamma_i)}{\gamma_i\mbf{p}_i } \big] > 0$ and $\psi := (\psi_1, \cdots, \psi_n)$.
Moreover, for $\psi_{\min} := \min\sets{ \psi_i : i \in [n]} > 0$, the following convergence bound also holds:
\myeq{eq:RBC_PEG_ergodic_rate}{
\frac{\psi_{\min} }{ K + 1 }\sum_{k=0}^K \E\big[ \norms{Gx^k}^2 \big] \leq  \frac{1}{K+1}\sum_{k=0}^{K}  \psi \circ \E\big[\norms{ Gx^k }^2 \big] \leq  \frac{3\norms{x^0 - x^{\star}}^2}{ K+1 }.
}
\end{theorem}

\proof{\textbf{Proof.}}
First, we denote $\kappa_i^2 :=  \frac{  \sigma_i  }{  2(1 + \sigma_i) }$.
Since $\mbf{p}_i \in (0, 1)$, if $L_i\gamma_i\eta_i  \leq \kappa_i \sqrt{\mbf{p}_i}$ for all $i\in [n]$, then the condition $L_i^2\gamma_i^2\eta_i^2 [ 2(1+\sigma_i) - \mbf{p}_i  ]  \leq \mbf{p}_i\sigma_i$  in Lemma~\ref{le:RCOG_descent} holds.
Moreover, the condition $L_i\gamma_i\eta_i  \leq \kappa_i \sqrt{\mbf{p}_i}$  leads to $0 < \eta_i \leq \frac{\kappa_i\sqrt{\mbf{p}_i}}{\gamma_i L_i}$ as our choice of $\eta_i$ in \eqref{eq:stepsize_cond1} of Theorem~\ref{th:RCOG_convergence}.

Next, let us denote $\psi_i := 2\gamma_i\eta_i(1 - \gamma_i)\big[ \eta_i - \frac{\rho_i}{\gamma_i} - \frac{\eta_i(1 + \sigma_i)(1-\gamma_i)}{ \gamma_i\mbf{p}_i } \big]$.
To guarantee $\psi_i > 0$, we first need to impose that $\gamma_i\mbf{p}_i  > (1 + \sigma_i)(1-\gamma_i)$ and then choose $\eta_i > \frac{ \rho_i\mbf{p}_i}{ \gamma_i\mbf{p}_i  - (1 + \sigma_i)(1-\gamma_i)}$.
The first condition holds if we choose $\frac{1+ \sigma_i}{1+\sigma_i + \mbf{p}_i} < \gamma_i < 1$ as we stated.
Clearly, combining the second condition and $0 < \eta_i \leq \frac{\kappa_i\sqrt{\mbf{p}_i}}{ \gamma_i L_i }$, we have $L_i\rho_i <  \kappa_i\sqrt{\mbf{p}_i}\big[ 1 -  \frac{(1+\sigma_i)(1-\gamma_i)}{\gamma_i\mbf{p}_i } \big]$ as stated in \eqref{eq:stepsize_cond1} of Theorem~\ref{th:RCOG_convergence}.

Now, using $\psi_i := 2 \gamma_i\eta_i(1 - \gamma_i)\big[ \eta_i - \frac{\rho_i}{\gamma_i} - \frac{\eta_i(1 + \sigma_i)(1-\gamma_i)}{ \gamma_i\mbf{p}_i } \big] > 0$ and $\psi := (\psi_1,\cdots,\psi_n)$,  we obtain $\E_k\big[\Pc_{k+1}\big] \leq \Pc_k -  \psi  \circ \norms{Gx^k }^2$ from \eqref{eq:RCOG_descent}.
Taking the full expectation of this inequality, we get \eqref{eq:RCOG_descent1}.
Since $\Pc_k \geq 0$, letting  $\psi_{\min} := \min_{i \in [n]}\psi_i$, and then averaging \eqref{eq:RCOG_descent1} from $k := 0$ to $k := K$, we get
\myeqn{
\frac{\psi_{\min} }{K+1} \cdot \sum_{k=0}^{K} \E\big[\norms{Gx^k}^2 \big] \leq \frac{1}{K+1} \sum_{k=0}^K \psi \circ \E\big[ \norms{ Gx^k] }^2 \big]\leq  \frac{\Pc_0}{K+1}.
}
Finally, since $x^{-1} = x^0$ and $Gx^{\star} = 0$, by the Lipschitz continuity of $G$ and $2\gamma_i^2\eta_i^2L_i^2 \leq 2 \kappa_i\sqrt{ \mbf{p}_i } \leq 1$, we have $\Pc_0 = \norms{x^0 + \gamma\eta \circ Gx^0 - x^{\star}}^2 \leq 2\norms{x^0 - x^{\star}}^2 + 2\gamma^2\eta^2 \circ \norms{Gx^0 - Gx^{\star}}^2 \leq \sum_{i=1}^n(2 + 2\gamma_i^2\eta_i^2 L_i^2)\norms{x_i^0 - x_i^{\star}}^2 \leq 3\norms{x^0 - x^{\star}}^2$.
Substituting  this bound of $\Pc_0$ into the last inequality, we obtain \eqref{eq:RBC_PEG_ergodic_rate}.
\Eproof
\endproof

\vspace{1ex}
\begin{remark}[\textbf{Simple stepsizes and distribution}]\label{re:Alg1_rm1}
One simple choice of $\gamma_i$ and $\eta_i$ is 
\myeqn{
\gamma_i := \frac{2(1+ \sigma_i)}{2(1 + \sigma_i) + \mbf{p}_i} \quad \text{and} \quad \eta_i :=  \frac{\kappa_i \sqrt{\mbf{p}_i} }{ L_i }\Big( 1 + \frac{ \mbf{p}_i }{2(1 + \sigma_i)} \Big).
}
In this case, the condition on $L_i\rho_i$ becomes $2L_i\rho_i <  \kappa_i\sqrt{ \mbf{p}_i }$.
In particular, if we choose $\sigma_i := 1$, then $\gamma_i = \frac{4}{4+\mbf{p}_i}$ and $\eta_i = \frac{(4 + \mbf{p}_i) \sqrt{\mbf{p}_i}}{8 L_i}$, and the last condition is simplified as $4L_i\rho_i < \sqrt{\mbf{p}_i}$.
If $\rho_i = 0$, i.e. $\iprods{Gx, x - x^{\star}} \geq 0$ for all $x \in \dom{G}$, then the condition $4L_i\rho_i < \sqrt{\mbf{p}_i}$ automatically holds.

To see the dependence of the convergence bound \eqref{eq:RBC_PEG_ergodic_rate} on the number of block coordinates $n$, we again choose $\sigma_i := 1$ for all $i \in [n]$.
For simplicity, we also assume that $\rho = 0$ and $L_i = L$, and choose $\mbf{p}_i = \frac{1}{n}$  for all $i \in [n]$.
Then by Jensen's inequality, \eqref{eq:RBC_PEG_ergodic_rate} reduces to 
\myeq{eq:Alg1_rm1_bound}{
\min_{0 \leq k \leq K} \E\big[ \norms{Gx^k} \big]   \leq \Big( \frac{1}{K+1}\sum_{k=0}^{K} \E\big[\norms{Gx^k}^2 \big] \Big)^{1/2} \leq  \frac{4 \sqrt{3}  n L  \norms{x^0 - x^{\star}} }{ \sqrt{ K+1} }.
}
This bound shows that ${\displaystyle\min_{0 \leq k \leq K}} \E[ \norms{Gx^k} ] = \BigO{ \frac{n L\norms{x^0 - x^{\star}}}{ \sqrt{K} } }$, which means that the best-iterate convergence rate of $\E\big[ \norms{Gx^k} \big]$ depends linearly on the number of blocks $n$.
This linear dependence is consistent with previous known results in the literature, e.g., in \cite[Theorem 4.4]{kotsalis2022simple}, but for a different class of problems covered by Assumption~\ref{as:A1}.
Note that we can also choose the output $\hat{x}^k$ uniformly and randomly from $\sets{x^0, x^1, \cdots, x^k}$ generated by \ref{eq:naRCOG_scheme} to obtain $\E[ \norms{G\hat{x}^k} ] = \BigO{ \frac{n L\norms{x^0 - x^{\star}}}{ \sqrt{K} } }$.
From Lemma~\ref{le:RCOG_descent}, by applying the supermartingale theorem \cite[Theorem 1]{robbins1971convergence}, we can also prove that $\sum_{k=0}^{\infty} \norms{Gx^k}^2 < +\infty$ almost surely, leading to $\lim_{k\to\infty}\norms{Gx^k} = 0$ almost surely.
\end{remark}

\begin{remark}[\textbf{Non-uniform distribution of $\mbf{p}$}]\label{re:Alg1_rm2}
If $L_i$ in \eqref{eq:CE_assumption_A1_a} are different for all $i \in [n]$,  then we can choose $\mbf{p}_i := \frac{L_i^{\nu} }{S_{\nu}} \in (0, 1)$ for $i \in [n]$, where $S_{\nu} := \sum_{i=1}^nL_i^{\nu}$ for some $\nu > 0$.
For $\sigma_i := 1$, if we choose $\gamma_i := \frac{4}{4 + \mbf{p}_i}$ and $\eta_i := \frac{(4 + \mbf{p}_i) \sqrt{\mbf{p}_i}}{8 L_i}$ as in Remark~\ref{re:Alg1_rm1}, then $\psi_i := \frac{\mbf{p}_i^2}{16L_i^2} = \frac{L_i^{2\nu-2}}{16S_{\nu}^2}$.
If $\nu = 1$, then $\psi_i = \frac{1}{16S_1^2}$, and \eqref{eq:Alg1_rm1_bound} becomes ${\displaystyle\min_{0 \leq k \leq K}}  \E[ \norms{Gx^k}] \leq \frac{4\sqrt{3} S_1 \cdot \norms{x^0 - x^{\star}} }{  \sqrt{ K+1} }$, which only depends on $S_1 := \sum_{i=1}^nL_i$.
If $\nu \neq 1$, then \eqref{eq:Alg1_rm1_bound} reduces to ${\displaystyle\min_{0 \leq k \leq K}}  \E[ \norms{Gx^k}]  \leq \frac{4\sqrt{3} S_{\nu} \cdot \norms{x^0 - x^{\star}} }{ L_{\min}^{\nu-1} \sqrt{ K+1} }$, where $L_{\min} := \min\sets{L_i : i \in [n]}$.
\end{remark}

\beforesec
\section{The Accelerated Randomized Block-Coordinate OG Method.}\label{sec:AcRCOG_method}
\aftersec
Alternative to the non-accelerated method in Section~\ref{sec:Na_RCOG_method}, in this section, we develop an accelerated variant of \ref{eq:naRCOG_scheme} for solving \eqref{eq:NE} under the following assumption.

\begin{assumption}\label{as:A2}
\revised{The operator $G$ in \eqref{eq:NE} is $\bar{\beta}$-co-coercive, i.e.  there exists $\bar{\beta} := (\bar{\beta}_1, \cdots, \bar{\beta}_n) \in [0, +\infty)^n$ such that for any $x, y \in \dom{G}$:}
\myeq{eq:CE_assumption_A2}{
\iprods{Gx  - Gy, x  - y} \geq  \bar{\beta} \circ \norms{Gx - Gy}^2.
\tag{CP}
}
\end{assumption}

Assumption~\ref{as:A2} states that $G$ is $1$-co-coercive w.r.t. a weighted norm $\norms{\cdot}_{\bar{\beta}}$.
Clearly, under \eqref{eq:CE_assumption_A2}, $\iprods{Gx - Gy, x - y} \geq \bar{\beta}_{\min}\norms{Gx - Gy}^2$, showing that $G$ is $\bar{\beta}_{\min}$-co-coercive, where $\bar{\beta}_{\min} := \min\sets{\bar{\beta}_i : i \in [n]}$.
Conversely, if $G$ is $\hat{\beta}$-co-coercive, then it satisfies \eqref{eq:CE_assumption_A2} with $\bar{\beta}_i = \hat{\beta}$ for all $i \in [n]$.
Note that \eqref{eq:CE_assumption_A2} is weaker than the block-coordinate-wise co-coerciveness of $G$, i.e. $\iprods{[Gx]_i - [Gy]_i, x_i - y_i} \geq \bar{\beta}_i\norms{[Gx]_i - [Gy]_i}^2$ for all $x, y \in \dom{G}$ and $i \in [n]$.
The condition~\eqref{eq:CE_assumption_A2} also implies that $G$ is monotone and $\bar{L}$-Lipschitz continuous with $\bar{L} := \big( \min\set{\bar{\beta}_i : i \in [n]} \big)^{-1}$.
It is stronger than Assumption~\ref{as:A1}.

As shown in Subsection~\ref{subsec:examples}, \eqref{eq:NE} is equivalent to the fixed-point problem \eqref{eq:fixed_point} of $F := \Id - \eta G$ for any $\eta > 0$.
Moreover, the co-coerciveness of $G$ is equivalent to the non-expansiveness of $F$ \cite[Proposition 4.11]{Bauschke2011}.
Therefore, the problem of finding a fixed-point of a non-expansive operator $F$, can be cast into \eqref{eq:NE} under Assumption~\ref{as:A2}, which covers a broad range of applications.

In smooth and convex optimization, if $Gx = \nabla{f}(x)$, the gradient of the objective function $f$, then the convexity of $f$ and the $L$-Lipschitz continuity of $\nabla{f}$ is equivalent to the $\frac{1}{L}$-co-coerciveness of $G$, see \cite[Theorem 2.1.5]{Nesterov2004}.
Thus Assumption~\ref{as:A2} is not restrictive. 

\beforesubsec
\subsection{The derivation of algorithm.}\label{subsec:ARCOG_scheme} 
\aftersubsec
Our \textit{\textbf{accelerated randomized block-coordinate optimistic gradient}} \eqref{eq:ARCOG_scheme} scheme for solving \eqref{eq:NE} is presented as follows:
\textit{Starting from $x^0 \in \dom{G}$, we set $x^{-1} := x^0$, and at each iteration $k \geq 0$, we randomly sample a block-coordinate $i_k \in [n]$ using \eqref{eq:prob_cond1} and update
\myeq{eq:ARCOG_scheme}{
x^{k+1} := x^k + \theta_k (x^k - x^{k-1})  - \frac{ \eta_k  }{\mbf{p}_{i_k}}  \big(  G_{[i_k]}x^k  -  \gamma_k G_{[i_k]}x^{k-1} \big),
\tag{ARCOG}
}
where $\theta_k \in (0, 1)$,  $\eta_k > 0$, and $\gamma_k \in (0, 1)$ are given parameters at each iteration $k$ and for all block-coordinates $($determined later$)$, and $G_{[i]}x^k = [\mbf{0}, \cdots, \mbf{0}, [Gx^k]_i, \mbf{0}, \cdots, \mbf{0}]$ such that $[Gx^k]_i$ is the $i$-the block of $Gx^k$ ($i\in [n]$).
}

Compared to \ref{eq:naRCOG_scheme}, our \ref{eq:ARCOG_scheme} still requires to use simultaneously two block-coordinates $[Gx^k]_{i_k}$ and $[Gx^{k-1}]_{i_k}$ evaluated at the  two consecutive iterates $x^{k}$ and $x^{k-1}$, respectively but it now additionally has a momentum or an inertial term $\theta_k (x^k - x^{k-1})$.
These two features  make it different from existing randomized [block]-coordinate methods in  \cite{combettes2015stochastic,peng2016arock} for monotone inclusions as well as randomized coordinate-type methods for optimization such as \cite{fercoq2015accelerated,latafat2019block,Nesterov2012,nesterov2017efficiency,richtarik2016parallel,wright2015coordinate}.

\beforesubsec
\subsection{Practical variant of \ref{eq:ARCOG_scheme}.}\label{subsec:pracitice_RBC_scheme}
\aftersubsec
Due to the momentum term $\theta_k (x^k - x^{k-1})$, \ref{eq:ARCOG_scheme} still requires full vector update at each iteration $k$.
To resolve this issue, we derive an alternative form of \ref{eq:ARCOG_scheme} as in the following proposition so that it is easier to implement in practice. 
The proof of  Proposition~\ref{pro:practical_variant} is deferred to  Appendix~\ref{apdx:pro:practical_variant}.

\begin{proposition}\label{pro:practical_variant}
Given $x^0 \in \dom{G}$, we set $z^0 = z^{-1} := x^0$ and $w^0 = w^{-1} := 0$, and at each iteration $k \geq 0$, we update
\myeq{eq:ARCOG_practice_variant}{
\arraycolsep=0.3em
\left\{\begin{array}{lcl}
d^k_{i_k} &:= &  [G(z^k + c_kw^k)]_{i_k}  -  \gamma_k[G(z^{k-1} + c_{k-1}w^{k-1})]_{i_k}, \ \ (\text{only block $i_k$}) \vspace{1ex}\\
w_i^{k+1} & := & \left\{\begin{array}{ll}
w_i^k - \frac{  \eta_k }{\mbf{p}_{i} \tau_{k+1} } d^k_i,  & \text{if $i = i_k$}, \vspace{1ex}\\
w_i^k, &\text{otherwise}, 
\end{array}\right. \vspace{1ex}\\
z_i^{k+1} & := & \left\{\begin{array}{ll}
z_i^k + \frac{  \eta_k c_k}{\mbf{p}_{i}\tau_{k+1}} d^k_{i}, &\text{if $i = i_k$}, \vspace{1ex}\\
z_i^k, &\text{otherwise},
\end{array}\right.
\end{array}\right.
}
where $c_0 = c_{-1} := 0$ and $\tau_0 := 1$ and $\tau_k$ and $c_k$ are respectively updated as
\myeq{eq:ARCOG_practice_variant_param}{
\arraycolsep=0.3em
\begin{array}{lcl}
\tau_{k+1} := \tau_k\theta_k \quad \text{and} \quad c_k := c_{k-1} + \tau_k.
\end{array}
}
Then, $x^k := z^k + c_kw^k$ is identical to the one generated by \ref{eq:ARCOG_scheme} for all $k\geq 0$.
\end{proposition}

The scheme \eqref{eq:ARCOG_practice_variant} is different from accelerated randomized block-coordinate methods for convex optimization such as \cite{Alacaoglu2017,fercoq2015accelerated,Nesterov2012,Tran-Dinh2019spd}.
Nevertheless, it still uses block-coordinate evaluations of $G$ at $z^k + c_kw^k$ and $z^{k-1} + c_{k-1}w^{k-1}$ as in  methods for convex optimization.
\revised{Such evaluations do not necessarily require one to form the full vectors $z^k + c_kw^k$ and $z^{k-1} + c_{k-1}w^{k-1}$, see, e.g., \cite{fercoq2015accelerated} for concrete examples.}
Since \eqref{eq:ARCOG_practice_variant} does not require full dimensional updates of $w^k$ and $z^k$, we only update $z^k$ and $w^k$ at the active block $i_k$, and then compute $x^k$ at the last iteration. 
Note that one can also extend \ref{eq:ARCOG_scheme} and variant \eqref{eq:ARCOG_practice_variant} to update multiple blocks by randomly choosing a subset of block coordinates $\Sc_k \subset [n]$ such that $\Prob{i \in \Sc_k} = \mbf{p}_i > 0$ for $i\in [n]$.

\beforesubsec
\subsection{Lyapunov function and descent lemma.}\label{subsec:ARCOG_one_iter_analysis}
\aftersubsec
Given $\bar{\beta}$ in Assumption~\ref{as:A2}, let $\beta_i$ be given such that $0 < \beta_i \leq \bar{\beta}_i$ for all $i \in [n]$.
We introduce the following Lyapunov function:
\myeq{eq:ARCOG_scheme_Lyfunc}{
\arraycolsep=0.2em
\revised{\begin{array}{lcll}
\hat{\Pc}_k & := & 2 r t_k\eta_{k-1}\big[ \iprods{Gx^{k-1}, x^{k-1} - x^{\star}} -  \beta \circ \norms{Gx^{k-1}}^2 \big]  + \norms{r(x^{k-1} - x^{\star}) + t_k(x^k - x^{k-1})}^2 \vspace{1ex} \\
&& + {~} \mu r \norms{x^{k-1} - x^{\star}}^2, 
\end{array}}
}
where $x^{\star} \in \zer{G}$, $\beta := (\beta_1,\cdots, \beta_n)$, and $t_k > 0$, $r > 0$, and $\mu \geq 0$ are given parameters (determined later).
Clearly, under Assumption~\ref{as:A2} and the choice of $\beta_i$, we have $\hat{\Pc}_k \geq 0$ regardless the values of $x^{k-1}$ and $x^k$.
We first state the following descent lemma, whose proof is deferred to Appendix~\ref{apdx:subsec:le:ARCOG_key_est1}.

\begin{lemma}\label{le:ARCOG_key_est1}
Suppose that $\zer{G} \neq\emptyset$ and Assumption~\ref{as:A2} holds for \eqref{eq:NE}.
Let $\sets{x^k}$ be generated by \ref{eq:ARCOG_scheme} and $\hat{\Pc}_k$ be defined by \eqref{eq:ARCOG_scheme_Lyfunc}.
Let the parameters in \ref{eq:ARCOG_scheme} and \eqref{eq:ARCOG_scheme_Lyfunc} satisfy 
\myeq{eq:ARCOG_scheme_para_cond2}{
\hspace{-0ex}
\theta_k  :=  \frac{t_k - r - \mu}{t_{k+1}} \quad \text{and} \quad   \gamma_k :=  \frac{t_{k+1} \theta_k}{t_{k+1}\theta_k + r} \in (0, 1).
\hspace{-3ex}
}
Then, for all $k\geq 0$, the following estimate holds:
\myeq{eq:ARCOG_scheme_main_est1}{
\hspace{-4ex}
\arraycolsep=0.2em
\revised{\begin{array}{lcl}
\hat{\Pc}_k - \E_k\big[ \hat{\Pc}_{k+1} \big] & \geq & \mu (2t_k - r -  \mu) \norms{x^k - x^{k-1}}^2 + 2  t_{k+1}^2\theta_k\eta_k  (\bar{\beta} - \beta) \circ \norms{Gx^k  - Gx^{k-1} }^2 \vspace{1ex}\\
&& + {~} 2r \big( t_k\eta_{k-1} - t_{k+1}\gamma_k\eta_k \big)  \big[ \iprods{Gx^{k-1}, x^{k-1} - x^{\star}} -  \beta \circ \norms{ Gx^{k-1} }^2 \big] \vspace{1ex} \\
&& + {~}   \sum_{i=1}^n\frac{ t_{k+1}\eta_k [  2\beta_i \mbf{p}_i ( t_{k+1}\theta_k + r) - t_{k+1}\eta_k ] }{\mbf{p}_i} \norms{ [Gx^k]_i - \gamma_k [Gx^{k-1}]_i }^2.
\end{array}}
\hspace{-4ex}
}
\end{lemma}

\beforesubsec
\subsection{Main result 2: Convergence of \ref{eq:ARCOG_scheme}.}\label{subsec:main_result2}
\aftersubsec
For simplicity of our presentation, given $\bar{\beta}$ in  Assumption~\ref{as:A2} and $\mbf{p}$ in \eqref{eq:prob_cond1}, for $0 < \beta_i \leq \bar{\beta}_i$ ($i \in [n]$) and $0 < \omega < \min\sets{2\beta_i\mbf{p}_i : i \in [n]}$, we denote
\myeq{eq:ARCOG_scheme_Lambda_constants}{
\arraycolsep=0.2em
\left\{\begin{array}{lcllcllcl}
\Lambda_0 & :=  &   \max  \big\{\tfrac{1}{  2\beta_i} \; : \;  i\in[n] \big\},  & \quad\qquad & 
\Lambda_1^2  & := & \max \big\{  \frac{1}{ \bar{\beta}_i^2} \; : \; i\in[n] \big\}, \vspace{0.5ex}\\
\Lambda_2 & := &  \max \big\{ \frac{1 - \mbf{p}_i}{ 2\beta_i\mbf{p}_i - \omega} \; : \; i \in [n] \big\}, & \quad &
\Lambda_3 & := &  \max \big\{ \frac{1}{ 2\beta_i\mbf{p}_i - \omega} \; : \; i \in [n] \big\}.
\end{array}\right.
}
Next, for $r > 2$, we introduce the following constants:
\myeq{eq:ARCOG_scheme_constants}{
\arraycolsep=0.2em
\left\{\begin{array}{lcl}
C_0 & := &   \frac{\omega^2(r-1)^2(r-2) \Lambda_1^2 }{(r+1)} +  [ \omega r (r-1)\Lambda_0 +  r + 1 ] \cdot \max\big\{ \frac{4}{r}, \omega\Lambda_2\big\}, \vspace{1ex}\\
C_1 &:= & \frac{2(r+1)^2}{\omega^2(r-1)^2}\big[ \frac{C_0}{r-2} + \omega r(r-1)\Lambda_0 +  r + 1 \big], \vspace{1ex}\\
C_2 & := &    \Lambda_3 \left[  r(r-1)\Lambda_0 + \frac{r + 1 }{\omega} \right]  + r C_1.
\end{array}\right.
}
Now, we are ready to prove the convergence of \ref{eq:ARCOG_scheme} in the following theorem.

\begin{theorem}\label{th:ARCOG_convergence}
Suppose that $\zer{G} \neq\emptyset$  and Assumption~\ref{as:A2} holds for \eqref{eq:NE}.
Let $\beta_i$ be given such that $0 < \beta_i \leq \bar{\beta}_i$ for all $i \in [n]$.
For given $r > 2$ and $0 < \omega < \min \set{ 2\beta_i\mbf{p}_i  :  i \in [n]}$, let $\sets{x^k}$ be generated by \ref{eq:ARCOG_scheme} using the following parameters:
\myeq{eq:ARCOG_scheme_update_pars}{
\arraycolsep=0.2em
\theta_k := \frac{k}{k + r + 2}, \quad  \gamma_k := \frac{k}{k+r},  \quad\text{and} \quad \eta_k := \frac{\omega(k + r)}{k+r+2}. 
}
Then, for $\Lambda_0$, $C_0$, $C_1$, and $C_2$ given in \eqref{eq:ARCOG_scheme_Lambda_constants} and \eqref{eq:ARCOG_scheme_constants}, the following statements hold:
\vspace{0.5ex}
\begin{itemize}
\item[$\mathrm{(i)}$] \textbf{$[$Summable bounds$]$}
The following summable results are achieved:
\myeq{eq:ARCOG_scheme_result1}{
\hspace{-2ex}
\arraycolsep=0.2em
\begin{array}{lcl}
\sum_{k=1}^{+\infty}(2 k + r + 1) \cdot \Exp{\norms{x^k - x^{k-1}}^2}    & \leq & \big[ \omega r(r-1) \Lambda_0 +  r + 1 \big]  \cdot \norms{x^0 - x^{\star}}^2, \vspace{1.5ex} \\ 
\sum_{k=0}^{+\infty} (k + r)^2 \cdot \sum_{i=1}^n \frac{(2 \beta_i\mbf{p}_i - \omega)}{\mbf{p}_i} \cdot \Exp{\norms{ [Gx^k]_i -   \gamma_k[Gx^{k-1}]_i}^2} & \leq & \left[  r(r-1)\Lambda_0 + \frac{r + 1 }{\omega} \right]  \cdot \norms{x^0 - x^{\star}}^2, \vspace{1.5ex} \\
\sum_{k=0}^{+\infty}(k + r + 1) \cdot \Exp{\norms{Gx^k}^2 } & \leq & C_1 \cdot \norms{x^0 - x^{\star}}^2, \vspace{1.5ex} \\ 
\sum_{k=1}^{+\infty} k^2  \cdot \Exp{ \bar{\beta} \circ \norms{Gx^k  -  Gx^{k-1} }^2 }  & \leq & \frac{\omega C_2}{2} \cdot \norms{x^0 - x^{\star}}^2.
\end{array}
\hspace{-3ex}
}

\item[$\mathrm{(ii)}$] \textbf{$[$Big-O and small-o convergence rates$]$}~We have the following convergence rates:
\myeq{eq:ARCOG_scheme_main_est2b}{
\arraycolsep=0.3em
\left\{\begin{array}{lcl}
\Exp{\norms{x^{k+1} - x^k}^2 } &  \leq &   \frac{\omega^2 C_2}{2(k + r + 2)^2} \cdot \norms{x^0 - x^{\star}}^2,   \vspace{1.5ex}\\
\Exp{\norms{Gx^k}^2 } & \leq &     \frac{2(r+1)^2(C_0 + C_2)}{ (r-1)^2(k+r-1)(k+r+2)}  \cdot \norms{x^0 - x^{\star}}^2,   \vspace{1.5ex}\\
{\displaystyle\lim_{k\to\infty}} \big\{ (k + r + 2)^2 \cdot \Exp{\norms{x^{k+1} - x^k}^2 }  \big\} &   = & 0, \vspace{1.25ex}\\
{\displaystyle\lim_{k\to\infty}} \big\{ (k + r - 1)(k+r+2)   \cdot  \Exp{\norms{Gx^k}^2 } \big\}  &  = &   0.
\end{array}\right.
}

\item[ $\mathrm{(iii)}$] \textbf{$[$Almost sure convergence$]$}~
The following statements hold in the \textbf{almost sure} sense:
\myeqn{
\arraycolsep=0.3em
\begin{array}{ll}
& \sum_{k=0}^{+\infty}(k + r + 2)\norms{x^{k+1} - x^k}^2 < +\infty \quad \textrm{and} \quad \sum_{k=0}^{+\infty}(k + r + 1)\norms{Gx^k}^2 < +\infty, \vspace{1.5ex}\\
& \lim_{k\to\infty} k^2\norms{x^{k+1} - x^k}^2 = 0 \quad \textrm{and} \quad  \lim_{k\to\infty} k^2\norms{Gx^k}^2 = 0.
\end{array}
}
Moreover, $\sets{x^k}$ converges almost surely to $x^{\star} \in \zer{G}$.
\end{itemize}
\end{theorem}

Theorem~\ref{th:ARCOG_convergence} establishes the convergence rates of \ref{eq:ARCOG_scheme} on two main criteria $\Exp{\norms{Gx^k}^2}$ and $\Exp{\norms{x^{k+1} - x^{k}}^2}$, among other summable results as stated in \eqref{eq:ARCOG_scheme_result1} and the almost sure convergence stated in (iii).
While the first two lines of \eqref{eq:ARCOG_scheme_main_est2b} present the $\BigO{1/k^2}$-convergence rates, its last two lines show that $\Exp{\norms{x^{k+1} - x^k}^2 }  = \SmallO{\frac{1}{k^2}}$ and $\Exp{\norms{Gx^k}^2 } = \SmallO{\frac{1}{k^2}}$, respectively, which state $\SmallO{1/k^2}$-convergence rates.
Note that in accelerated randomized block-coordinate methods for convex optimization, it is not trivial to prove convergence of the iterative sequence.
The almost sure convergence of iterates was proven for non-accelerated randomized block-coordinate methods such as in \cite{combettes2015stochastic}.
However, to the best of our knowledge, Statement (iii) is the first result in accelerated randomized block-coordinate methods, even in the context of convex optimization.

\begin{remark}[\textbf{Dependence on $n$}]\label{re:ARCOG_rates1}
To see the dependence of the first and second bounds in \eqref{eq:ARCOG_scheme_main_est2b} on the number of blocks $n$, we assume that $\beta_i := \beta = \bar{\beta} > 0$, and choose $\mbf{p}_i := \frac{1}{n}$ for all $i \in [n]$,  and $r := 3$ for simplicity of our analysis.
Moreover, we also choose $\omega := \frac{\beta}{n}$, which satisfies the conditions of Theorem~\ref{th:ARCOG_convergence}.
Then, we can easily show that $\Lambda_0 = \frac{1}{2\beta}$, $\Lambda_1^2 = \frac{1}{\beta^2}$, $\Lambda_2 = \frac{n-1}{\beta}$, and  $\Lambda_3 = \frac{n}{\beta}$.
In this case, we also have 
$C_0 = \frac{16}{3} + \frac{4}{n} + \frac{1}{n^2} = \BigOs{1}$, $C_1 = \frac{8n^2}{\beta^2}\big( \frac{28}{3} + \frac{7}{n} + \frac{1}{n^2} \big) = \BigOs{\frac{n^2}{\beta^2}}$, and $C_2 = \frac{n(3+4n)}{\beta^2} + \frac{24n^2}{\beta^2}\big(\frac{28}{3} + \frac{7}{n} + \frac{1}{n^2} \big) = \BigOs{\frac{n^2}{\beta^2}}$.

Using these expressions, we can show that
\myeqn{
\Exp{\norms{x^{k+1} - x^k}^2 } = \BigO{ \frac{\norms{x^0 - x^{\star}}^2}{k^2} } \quad \text{and} \quad \Exp{\norms{Gx^k}^2 } = \BigO{ \frac{n^2 \cdot \norms{x^0 - x^{\star}}^2}{\beta^2 \cdot k^2} }.
}
Clearly, the first estimate is independent of both $n$ and $\beta$, while the second one depends on both $n$ and $\beta$.
The dependence of convergence rates on $n^2$ has been observed in Nesterov's accelerated RBC first-order methods for convex optimization, see, e.g., \cite{Alacaoglu2017,fercoq2015accelerated,Nesterov2012}.
\end{remark}

Note that we have not tried to tighten the constants defined in \eqref{eq:ARCOG_scheme_Lambda_constants} and \eqref{eq:ARCOG_scheme_constants} of Theorem~\ref{th:ARCOG_convergence} and Remark~\ref{re:ARCOG_rates1}, leading to some undesirably large values of $C_0$, $C_1$, and $C_2$.
They can be further improved by refining the proof of Theorem~\ref{th:ARCOG_convergence}.
In addition, one can choose different values for $\mbf{p}_i$, $\omega$, and $r$ to reduce these constants.
For example, we can choose $\mbf{p}_i := \frac{1}{\bar{\beta}_i^{\nu}S_{\nu}}$ for all $i \in [n]$ and some $\nu > 0$, where $S_{\nu} := \sum_{i=1}^n \bar{\beta}_i^{-\nu}$, $\omega := \frac{1}{S_{\nu}}$, and $r := 2.1$.
This choice of $\mbf{p}_i$ takes into account the non-homogeneity of $\bar{\beta}$ in \eqref{eq:CE_assumption_A2}, and can improve the dependence on $n$ of our convergence rates.

\vspace{0.75ex}
\proof{\textbf{The proof of Theorem~\ref{th:ARCOG_convergence}}.}
We divide our proof into several steps as follows.

\vspace{0.5ex}
(a)~\textit{The choice of parameters.}
First, for simplicity of our proof, we choose $\mu := 1$ in $\hat{\Pc}_k$ given by \eqref{eq:ARCOG_scheme_Lyfunc}.
Then, from \eqref{eq:ARCOG_scheme_para_cond2}, we have $\theta_k := \frac{t_k - r - 1}{t_{k+1}}$ and $\gamma_k := \frac{t_k - r - 1}{t_k - 1}$.
Next, to guarantee the non-negativity of the right-hand side of \eqref{eq:ARCOG_scheme_main_est1}, we impose the following two conditions:
\begin{equation}\label{eq:th41_proof0}
t_k\eta_{k-1} \geq t_{k+1}\gamma_k\eta_k \quad \text{and} \quad 2\beta_i \mbf{p}_i ( t_{k+1}\theta_k + r) - t_{k+1}\eta_k \geq 0.
\end{equation}
Now,  for a given $r \geq 1$, we update $t_k := k + r + 1$ and $\eta_k := \frac{\omega(t_k - 1)}{ t_{k+1} } = \frac{\omega(k+r)}{ k + r + 2}$ as in \eqref{eq:ARCOG_scheme_update_pars}, where $0 < \omega \leq \min \sets{ 2\beta_i \mbf{p}_i : i \in [n] }$.
Then, both conditions in \eqref{eq:th41_proof0} are satisfied. 
Finally, since  $t_k := k + r + 1$, we can easily show that $\theta_k = \frac{k}{k + r + 2}$ and $\gamma_k = \frac{k}{k + r}$ as stated in the update rule \eqref{eq:ARCOG_scheme_update_pars}.

(b)~\textit{Upper bounding $\hat{\Pc}_0$ and $\norms{Gx^0}$.}
From \eqref{eq:CE_assumption_A2} and $Gx^{\star} = 0$, we have $\bar{\beta}_i\norms{[Gx^0]_i} \leq \norms{x_i^0 - x_i^{\star}}$ for all $i \in [n]$.
By this relation and the Cauchy-Schwarz and Young inequalities, we can show that
\myeqn{
\arraycolsep=0.2em
\begin{array}{lcl}
\iprods{Gx^0, x^0 - x^{\star}} \leq \sum_{i=1}^n\norms{[Gx^0]_i}\norms{x_i^0 - x^{\star}_i} \leq \sum_{i=1}^n \beta_i\norms{[Gx^k]_i}^2 + \tfrac{1}{4} \cdot {\displaystyle\max_{i\in[n]}} \big\{ \frac{1}{\beta_i} \big\} \cdot \norms{x^0 - x^{\star}}^2.
\end{array}
}
Moreover, from \eqref{eq:ARCOG_scheme_update_pars} we have $t_0\eta_{-1} = \omega(r-1)$.
Using this fact, $\mu = 1$, $x^0 = x^{-1}$, $\Lambda_0 := \max \sets{ \frac{1}{2\beta_i} : i\in[n] }$, and $\Lambda_1^2 := \max\sets{  \frac{1}{ \bar{\beta}_i^2 } : i\in[n] }$ from \eqref{eq:ARCOG_scheme_Lambda_constants}, we can derive from \eqref{eq:ARCOG_scheme_Lyfunc} that
\myeq{eq:P0_ub}{
\arraycolsep=0.2em
\begin{array}{lcl}
\hat{\Pc}_0 & \leq &  \big[ \omega r(r-1)\Lambda_0 +  r + 1 \big] \cdot \norms{x^0 - x^{\star}}^2 \quad \text{and} \quad \norms{Gx^0}^2  \leq    \Lambda_1^2 \cdot \norms{x^0 - x^{\star}}^2.
\end{array}
}

(c)~\textit{The first two summable bounds of \eqref{eq:ARCOG_scheme_result1}.}
By \eqref{eq:ARCOG_scheme_update_pars}, \eqref{eq:th41_proof0} holds.
Hence, \eqref{eq:ARCOG_scheme_main_est1} reduces to
\myeqn{
\arraycolsep=0.2em
\begin{array}{lcl}
\hat{\Pc}_k - \E_k\big[ \hat{\Pc}_{k+1} \big] & \geq & \omega(k+r)^2 \cdot \sum_{i=1}^n\frac{  ( 2\beta_i \mbf{p}_i  - \omega ) }{\mbf{p}_i} \norms{ [Gx^k]_i - \gamma_k [Gx^{k-1}]_i }^2 +  (2k + r + 1) \cdot \norms{x^k - x^{k-1}}^2.
\end{array}
}
Taking the full expectation of this inequality, we obtain
\myeq{eq:ARCOG_scheme_proof2001}{
\arraycolsep=0.2em
\begin{array}{lcl}
\E \big[ \hat{\Pc}_k \big] - \E\big[ \hat{\Pc}_{k+1} \big] & \geq &  \omega(k+r)^2 \cdot \sum_{i=1}^n\frac{  ( 2\beta_i \mbf{p}_i  - \omega ) }{\mbf{p}_i} \cdot \E\big[\norms{ [Gx^k]_i - \gamma_k [Gx^{k-1}]_i }^2 \big] \vspace{1ex}\\
&& + {~} (2k + r + 1) \cdot \E\big[ \norms{x^k - x^{k-1}}^2 \big].
\end{array}
}
Since $\E\big[ \hat{\Pc}_k \big] \geq 0$, summing up \eqref{eq:ARCOG_scheme_proof2001} from $k := 0$ to $k := K$, and noting that $x^{-1} = x^0$, we obtain
\myeq{eq:ARCOG_scheme_proof2002}{
\arraycolsep=0.2em
\begin{array}{lcl}
\sum_{k=0}^{K} (k+r)^2 \cdot \sum_{i=1}^n \frac{(2 \beta_i\mbf{p}_i - \omega)}{\mbf{p}_i} \cdot \E\big[\norms{   [Gx^k]_i -  \gamma_k [Gx^{k-1}]_i}^2 \big] &\leq & \frac{1}{ \omega} \cdot \E\big[\hat{\Pc}_0 \big], \vspace{1.5ex}\\
\sum_{k=1}^{K}(2k + r + 1) \cdot \Exp{\norms{x^k - x^{k-1}}^2} &\leq &  \E\big[\hat{\Pc}_0 \big].
\end{array}
}
These inequalities imply the first two lines of \eqref{eq:ARCOG_scheme_result1} after taking the limit as $K\to+\infty$ and using $\E\big[\hat{\Pc}_0 \big] = \hat{\Pc}_0 \leq  [ \omega r(r-1) \Lambda_0 +  r + 1 ]  \cdot \norms{x^0 - x^{\star}}^2$ from  \eqref{eq:P0_ub}.

(d)~\textit{The third summable bound of \eqref{eq:ARCOG_scheme_result1}.}
Let us define the following full vector:
\myeq{eq:RCHP_scheme00_barx_plus}{
\arraycolsep=0.3em
\begin{array}{lcl}
\bar{x}^{k+1} := x^k + \theta_k(x^k - x^{k-1}) - \eta_k ( Gx^k  -  \gamma_k Gx^{k-1} ) = z^k - \eta_k d^k,
\end{array}
}
where $z^k := x^k + \theta_k(x^k - x^{k-1})$ and $d^k :=   Gx^k  -  \gamma_kGx^{k-1}$.
Then, from \ref{eq:ARCOG_scheme} we have $x^{k+1} = z^k - \frac{\eta_k }{\mbf{p}_{i_k}} d_{[i_k]}^k$.
Therefore, for any vector $u^k$ independent of $i_k$, we can derive that
\myeq{eq:RCHP_scheme00b}{
\hspace{-4ex}
\arraycolsep=0.2em
\begin{array}{lcl}
\Exps{k}{\norms{x^{k+1} - u^k}^2} &= & \norms{z^k - u^k}^2 - 2 \eta_k \E_k\big[ \iprods{\mbf{p}_{i_k}^{-1}d_{[i_k]}^k, z^k - u^k} \big] + \eta_k^2 \cdot \E_k\big[ \norms{\mbf{p}_{i_k}^{-1} d_{[i_k]}^k}^2 \big] \vspace{0.5ex}\\
& \overset{\tiny\eqref{eq:le4.1_proof2}}{=} & \norms{z^k - u^k}^2 - 2 \eta_k \iprods{d^k, z^k - u^k} + \eta_k^2 \cdot \sum_{i=1}^n\frac{1}{\mbf{p}_i}\norms{d^k_i}^2 \vspace{1ex}\\
&= & \norms{z^k - \eta_k d^k - u^k}^2 + \eta_k^2 \cdot \sum_{i=1}^n \big( \frac{1}{\mbf{p}_i} - 1 \big)\norms{d^k_i}^2 \vspace{1ex}\\
&= & \norms{\bar{x}^{k+1} - u^k}^2 +  \eta_k^2 \cdot \sum_{i=1}^n\big(\frac{1}{\mbf{p}_i} - 1 \big)\norms{d^k_i}^2.
\end{array}
\hspace{-4ex}
}
Now, if we denote $\tau_k := \frac{\gamma_k \eta_k }{\eta_{k-1}}$, then, from \eqref{eq:RCHP_scheme00_barx_plus} we can easily show that 
\myeqn{
\arraycolsep=0.3em
\begin{array}{lcl}
\bar{x}^{k+1} - x^k + \eta_k Gx^k & = &  \tau_k (x^k - x^{k-1} +  \eta_{k-1} Gx^{k-1}) +  (1 - \tau_k ) \cdot \frac{\tau_k - \theta_k}{1-\tau_k} (x^{k-1} - x^k ).
\end{array}
}
Since $r \geq 1$, utilizing \eqref{eq:ARCOG_scheme_update_pars},  we can show that $\tau_k = \frac{(k+r+1)k}{(k+r+2)(k+r-1)}  \in (0, 1)$, $1 - \tau_k =  \frac{rk + (r-1)(r+2)}{(k+r-1)(k+r+2)} \leq \frac{r(k+r+1)}{(k+r-1)(k+r+2)}$, and $\frac{\tau_k - \theta_k}{1-\tau_k} = \frac{2k}{rk + (r+2)(r-1)} \leq \frac{2}{r}$.
Hence, by the convexity of $\norms{\cdot}^2$, we have
\myeqn{
\hspace{-1ex}
\arraycolsep=0.2em
\begin{array}{lcl}
\norms{\bar{x}^{k+1} - x^k + \eta_k Gx^k}^2 \leq  \tau_k \norms{x^k - x^{k-1} + \eta_{k-1} Gx^{k-1}}^2 + \frac{4(1-\tau_k)}{r^2}\norms{x^k - x^{k-1}}^2.
\end{array}
\hspace{-1ex}
}
Using a shorthand $v^k := x^{k+1} - x^k$ and  substituting $u^k := x^k -  \eta_k  Gx^k$ and $d^k :=   Gx^k  -  \gamma_kGx^{k-1}$ into \eqref{eq:RCHP_scheme00b}, and then combining the result with the last inequality, we can show that
\myeqn{
\arraycolsep=0.1em
\begin{array}{lcl}
\Exps{k}{ \norms{v^k +  \eta_k Gx^k}^2 } & \leq & \frac{k (k+r+1)}{ (k+r-1)(k+r+2) } \cdot \norms{v^{k-1} + \eta_{k-1} Gx^{k-1}}^2  +  \frac{4(k + r + 1)}{r (k+r-1) (k+r+2) } \cdot \norms{x^k - x^{k-1}}^2 \vspace{1ex}\\
&& + {~} \frac{\omega^2(k+r)^2}{(k+r+2)^2} \cdot \sum_{i=1}^n  \big( \frac{1-\mbf{p}_i}{\mbf{p}_i} \big) \cdot \norms{  [Gx^k]_i   -  \gamma_k [Gx^{k-1}]_i}^2. 
\end{array}
}
Taking the full expectation  both sides of this inequality and using $k+r-1 \leq k + r + 2$, we get
\myeqn{
\arraycolsep=0.3em
\begin{array}{lcl}
\Exp{ \norms{v^k +  \eta_k  Gx^k}^2 } & \leq &   \frac{k (k+r+1)}{ (k+r-1)(k+r+2) } \cdot \Exp{\norms{v^{k-1} +  \eta_{k-1}Gx^{k-1}}^2} +  \frac{4(k + r + 1)}{r (k+r-1) (k+r+2) } \cdot  \Exp{\norms{x^k - x^{k-1}}^2} \vspace{1ex}\\
&& + {~}  \frac{\omega^2(k+r)^2}{ (k+r-1)(k+r+2) } \cdot \sum_{i=1}^n \big( \frac{1-\mbf{p}_i}{\mbf{p}_i} \big) \cdot \Exp{\norms{ [Gx^k]_i  -  \gamma_k [Gx^{k-1}]_i }^2}. 
\end{array}
}
Multiplying this inequality by $(k+r-1)(k+r+2)$ and rearranging the result, we obtain
\myeq{eq:ARCOG_scheme_2023_proof1}{
\hspace{-3ex}
\arraycolsep=0.1em
\begin{array}{ll}
(k+r-1)(k+r+2) & \Exp{\norms{v^k + \eta_k Gx^k}^2}  \leq   (k+r-2)(k + r + 1) \Exp{\norms{v^{k-1} +  \eta_{k-1} Gx^{k-1}}^2}  \vspace{1ex}\\
& - {~} (r-2)(k + r + 1) \Exp{\norms{ v^{k-1} + \eta_{k-1} Gx^{k-1}}^2} \vspace{1ex}\\
& + {~}  \omega^2 (k + r)^2 \cdot \sum_{i=1}^n \big( \frac{1-\mbf{p}_i}{\mbf{p}_i} \big) \cdot  \Exp{\norms{  [Gx^k]_i  -  \gamma_k [Gx^{k-1}]_i }^2} \vspace{1ex}\\
&  + {~} \frac{ 4(k+r+1)}{r} \cdot \Exp{\norms{x^k - x^{k-1}}^2}.
\end{array}
\hspace{-6ex}
}
Summing up \eqref{eq:ARCOG_scheme_2023_proof1} from $k := 0$ to $k := K$, and then using \eqref{eq:ARCOG_scheme_proof2002}, $x^{-1} = x^0$, $\eta_{-1} = \frac{\omega(r-1)}{r+1}$, \eqref{eq:P0_ub}, and $\Lambda_1^2$ and $\Lambda_2$ from \eqref{eq:ARCOG_scheme_Lambda_constants}, we get 
\myeq{eq:RCHP_scheme00_proof3}{
\hspace{-4ex}
\arraycolsep=0.2em
\begin{array}{ll}
(K + r - 1)(K+r+2) & \Exp{\norms{v^K +  \eta_K Gx^K}^2} + (r - 2) \sum_{k=0}^{K} (k + r + 1) \cdot \Exp{ \norms{v^{k-1} + \eta_{k-1} Gx^{k-1}}^2} \vspace{1ex}\\
& \leq  \frac{\omega^2(r-1)^2(r-2)}{(r+1)} \cdot \Exp{\norms{Gx^0}^2} + \frac{4}{r} \cdot \sum_{k=0}^K  (2k+r+1) \cdot \Exp{\norms{x^k - x^{k-1}}^2} \vspace{1ex}\\
& +  \sum_{k=0}^K \omega (k + r)^2 \sum_{i=1}^n  \frac{2\beta_i\mbf{p}_i - \omega}{\mbf{p}_i } \cdot \frac{\omega( 1-\mbf{p}_i) }{ 2\beta_i\mbf{p}_i - \omega }  \cdot  \Exp{\norms{  [Gx^k]_i  -  \gamma_k [Gx^{k-1}]_i }^2} \vspace{1ex}\\ 
& \overset{\tiny\eqref{eq:ARCOG_scheme_proof2002}}{\leq}   \frac{\omega^2(r-1)^2(r-2)}{(r+1)} \cdot \Exp{\norms{Gx^0}^2}  + \max\Big\{ \frac{4}{r} ,\   {\displaystyle\max_{i \in [n]}} \big\{ \frac{\omega(1 - \mbf{p}_i)}{ 2\beta_i\mbf{p}_i - \omega} \big\}  \Big\} \cdot \E\big[\hat{\Pc}_0 \big] \vspace{0ex}\\  
&\overset{\tiny\eqref{eq:P0_ub}}{\leq} \Big[ \frac{\omega^2(r-1)^2(r-2) \Lambda_1^2 }{(r+1)}  +  [ \omega r (r-1)\Lambda_0 +  r + 1 ] \cdot \max\sets{ \frac{4}{r}\; , \; \omega \Lambda_2 }  \Big] \cdot \norms{x^0 - x^{\star}}^2.
\end{array}
\hspace{-4ex}
}
If we define $C_0 :=  \frac{\omega^2(r-1)^2(r-2) \Lambda_1^2 }{(r+1)} +  [ \omega r (r-1)\Lambda_0 +  r + 1 ] \cdot \max\big\{ \frac{4}{r}, \omega\Lambda_2\big\} $ as in \eqref{eq:ARCOG_scheme_constants}, then \eqref{eq:RCHP_scheme00_proof3} implies 
\myeq{eq:RCHP_scheme2023_proof2}{
\arraycolsep=0.3em
\begin{array}{ll}
& \Exp{\norms{x^{k+1} - x^k +  \eta_k Gx^k}^2} \leq \frac{C_0\norms{x^0 - x^{\star} }^2}{(k+r-1)(k+r+2)}, \vspace{1ex}\\
& (r - 2) \sum_{k=0}^{+\infty} (k + r + 1) \cdot \Exp{ \norms{ x^k - x^{k-1} + \eta_{k-1} Gx^{k-1}}^2} \leq C_0 \cdot \norms{x^0 - x^{\star}}^2.
\end{array}
}
Next, noting that $\eta_{k-1} = \frac{\omega(k+r-1)}{k+r+1} \geq \frac{\omega(r-1)}{r+1}$, by Young's inequality, we can show that
\myeq{eq:RCHP_scheme00_proof4}{
\arraycolsep=0.3em
\begin{array}{lcl}
\frac{\omega^2(r-1)^2}{(r+1)^2}\norms{Gx^{k-1}}^2 \leq \eta_{k-1}^2\norms{Gx^{k-1}}^2 & \leq & 2\norms{x^k - x^{k-1} + \eta_{k-1} Gx^{k-1}}^2 + 2\norms{x^k - x^{k-1}}^2.
\end{array}
}
Utilizing this inequality, \eqref{eq:RCHP_scheme2023_proof2} and the third line of \eqref{eq:ARCOG_scheme_proof2002}, we obtain
\myeq{eq:RCHP_scheme00_proof5}{
\arraycolsep=0.3em
\begin{array}{lcl}
\sum_{k=1}^{K}(k + r + 1) \cdot \Exp{\norms{Gx^k}^2 } \leq  \frac{2(r+1)^2}{\omega^2(r-1)^2}\big[ \frac{C_0}{r-2} + \omega r(r-1)\Lambda_0 +  r + 1 \big] \cdot \norms{x^0 - x^{\star}}^2.
\end{array}
}
Using $C_1$ defined by \eqref{eq:ARCOG_scheme_constants}, this inequality proves the third line of \eqref{eq:ARCOG_scheme_result1}.

(e)~\textit{The fourth summable bounds of \eqref{eq:ARCOG_scheme_result1}.}
From $x^{k+1} - x^k = \theta_k(x^k - x^{k-1}) - \eta_k \mbf{p}^{-1}_{i_k}d^k_{[i_k]}$ of \ref{eq:ARCOG_scheme}, we can easily show that
\myeqn{
\arraycolsep=0.3em
\begin{array}{lcl}
\norms{x^{k+1} - x^k}^2 &= & \theta_k^2\norms{x^k - x^{k-1}}^2 - 2\eta_k \theta_k\iprods{\mbf{p}^{-1}_{i_k}d^k_{[i_k]}, x^k - x^{k-1}} + \eta_k^2\norms{\mbf{p}^{-1}_{i_k}d^k_{[i_k]}}^2.
\end{array}
}
Taking the conditional expectation $\Exps{k}{\cdot}$, and using \eqref{eq:le4.1_proof2} below, we have
\myeqn{
\arraycolsep=0.2em
\begin{array}{lcl}
\Exps{k}{\norms{x^{k+1} - x^k}^2} &= & \theta_k^2\norms{x^k - x^{k-1}}^2 - 2\eta_k \theta_k\iprods{d^k, x^k - x^{k-1}} + \eta_k^2\sum_{i=1}^n\frac{1}{\mbf{p}_i}\norms{d^k_i}^2 \vspace{1ex}\\
&= & \theta_k^2\norms{x^k - x^{k-1}}^2 + \eta_k^2 \cdot \sum_{i=1}^n \frac{1}{\mbf{p}_i} \norms{ [Gx^k]_i - \gamma_k[Gx^{k-1}]_i}^2 \vspace{1ex}\\
&& - {~} 2\gamma_k \eta_k\theta_k\iprods{Gx^k - Gx^{k-1}, x^k - x^{k-1}} - 2(1 - \gamma_k)\eta_k \theta_k \iprods{Gx^k, x^k - x^{k-1}}.
\end{array}
}
Utilizing \eqref{eq:CE_assumption_A2} and Young's inequality into the last expression, we can show that
\myeqn{
\arraycolsep=0.3em
\begin{array}{lcl}
\Exps{k}{\norms{x^{k+1} - x^k}^2} &\leq & \big[ \theta_k^2 +  \frac{(1 - \gamma_k)\eta_k\theta_k}{\omega} \big] \norms{x^k - x^{k-1}}^2 +  \omega(1 - \gamma_k)\eta_k\theta_k\norms{Gx^k}^2 \vspace{1ex}\\
&& - {~} 2 \gamma_k \eta_k\theta_k \cdot \sum_{i=1}^n \bar{\beta}_i\norms{[Gx^k]_i - [Gx^{k-1}]_i}^2 \vspace{1ex}\\
&&  + {~} \eta_k^2 \cdot \sum_{i=1}^n \frac{1}{\mbf{p}_i} \norms{ [Gx^k]_i - \gamma_k[Gx^{k-1}]_i}^2.
\end{array}
}
By \eqref{eq:ARCOG_scheme_update_pars}, we have $\gamma_k \eta_k\theta_k = \frac{\omega k^2}{(k+r+2)^2}$, $\theta_k^2 + \frac{ \eta_k\theta_k(1 - \gamma_k) }{\omega} = \frac{k(k+r)}{(k+r+2)^2}$, and $(1 - \gamma_k)\eta_k\theta_k = \frac{\omega rk }{(k+r+2)^2}$.
Using these expressions into the last inequality, and taking the full expectation, we can show that
\myeq{eq:RCHP_scheme00_proof7_01}{
\hspace{-2ex}
\arraycolsep=0.2em
\begin{array}{lcl}
(k+r+2)^2\Exp{\norms{x^{k+1} - x^k}^2} & \leq & (k+r+1)^2 \Exp{\norms{x^k - x^{k-1}}^2} - (r+2)(k + r) \Exp{\norms{x^k - x^{k-1}}^2} \vspace{1ex}\\
&& + {~} \sum_{i=1}^n\frac{\omega^2(k+r)^2}{\mbf{p}_i} \cdot \Exp{\norms{[Gx^k]_i - \gamma_k [Gx^{k-1}]_i}^2} \vspace{1ex}\\
&& -  {~} 2 \omega k^2\sum_{i=1}^n \bar{\beta}_i \Exp{\norms{[Gx^k]_i - [Gx^{k-1}]_i}^2}  + \omega^2r k \cdot \Exp{\norms{Gx^k}^2}.
\end{array}
\hspace{-4ex}
}
Summing up \eqref{eq:RCHP_scheme00_proof7_01}  from $k := 0$ to $k := K$ and noting that  $x^{0} = x^{-1}$, we obtain
\myeq{eq:RCHP_scheme00_proof7}{  
\hspace{-2ex}
\arraycolsep=0.1em
\begin{array}{ll}
(K + r + 2)^2 & \Exp{\norms{x^{K+1} - x^K}^2} +   (r+2) \sum_{k=1}^K (k + r) \Exp{\norms{x^k - x^{k-1}}^2} \vspace{1ex}\\
& + {~} 2\omega \sum_{k=1}^K k^2 \sum_{i=1}^n \bar{\beta}_i \Exp{ \norms{ [Gx^k]_i  - [Gx^{k-1}]_i }^2 } \vspace{1ex}\\
& \leq ~ \sum_{k=0}^K \omega^2 (k + r)^2 \sum_{i=1}^n\frac{1}{\mbf{p}_i}  \E\big[ \norms{ [Gx^k]_i  - \gamma_k [Gx^{k-1}]_i }^2 \big] +  \omega^2 r \sum_{k=1}^K k \cdot \Exp{\norms{Gx^k}^2} \vspace{1ex}\\
& \leq ~ {\displaystyle\max_{i \in [n]}} \big\{\frac{\omega^2 }{2\beta_i\mbf{p}_i - \omega} \big\} \cdot \sum_{k=0}^K  (k + r)^2 \sum_{i=1}^n\frac{2\beta_i\mbf{p}_i - \omega}{\mbf{p}_i} \cdot  \E\big[ \norms{ [Gx^k]_i  - \gamma_k [Gx^{k-1}]_i }^2 \big]  \vspace{1ex} \\
& + {~} \omega^2 r \sum_{k=1}^K (k + r + 1) \cdot \Exp{\norms{Gx^k}^2} \vspace{1ex}\\
&\overset{\tiny\eqref{eq:ARCOG_scheme_constants}}{\leq } \Big[ \omega^2\Lambda_3\left[  r(r-1)\Lambda_0 + \frac{r + 1 }{\omega} \right]  + \omega^2 r C_1 \Big] \cdot \norms{x^0 - x^{\star}}^2 \equiv \omega^2 C_2 \cdot \norms{x^0 - x^{\star}}^2,
\end{array}
\hspace{-2ex}
}
where $\Lambda_3 :=  {\displaystyle\max_{i \in [n]}} \big\{ \frac{1}{ 2\beta_i\mbf{p}_i - \omega} \big\}$ as defined in \eqref{eq:ARCOG_scheme_Lambda_constants} and $C_2$ is given in \eqref{eq:ARCOG_scheme_constants}.
In this case, \eqref{eq:RCHP_scheme00_proof7} leads to the fourth line of \eqref{eq:ARCOG_scheme_result1}.

(f)~\textit{The $\BigO{1/k^2}$ and $\SmallO{1/k^2}$ rates in \eqref{eq:ARCOG_scheme_main_est2b}.}
The first bound of \eqref{eq:ARCOG_scheme_main_est2b} comes from \eqref{eq:RCHP_scheme00_proof7}.
Now, utilizing \eqref{eq:RCHP_scheme00_proof7} and the second line of \eqref{eq:ARCOG_scheme_proof2002}, we can conclude that  $\lim_{k\to\infty}(k + r + 2)^2 \cdot \Exp{\norms{x^{k+1} - x^k}^2}$ exists.
Combining the existence of this limit and the summable result $\sum_{k=1}^{\infty}(2k + r + 1) \cdot \Exp{\norms{x^k - x^{k-1}}^2} \leq  [ \omega r(r-1) \Lambda_0 +  r + 1 ] \cdot \norms{x^0 - x^{\star}}^2$, we obtain $\lim_{k\to\infty}(k + r + 2)^2 \cdot \Exp{\norms{x^{k+1} - x^k}}^2 = 0$, which proves a $\SmallO{1/k^2}$-convergence rate of $\Exp{\norms{x^{k+1} - x^k}^2}$ in \eqref{eq:ARCOG_scheme_main_est2b}.

Utilizing the first line of \eqref{eq:RCHP_scheme2023_proof2} and the first bound of \eqref{eq:ARCOG_scheme_main_est2b}, we can show that
\myeqn{
\arraycolsep=-0.0em
\begin{array}{lcl}
\Exp{\norms{Gx^{k}}^2} & \overset{\tiny\eqref{eq:RCHP_scheme00_proof4}}{\leq} &  \frac{2(r+1)^2}{\omega^2(r-1)^2} \cdot \Exp{\norms{x^{k+1} - x^k + \eta_{k} Gx^{k}}^2} + \frac{2(r+1)^2}{\omega^2(r-1)^2} \cdot \Exp{\norms{x^{k+1} - x^k}^2} \vspace{1ex}\\
& \leq & \frac{2(r+1)^2}{ (r-1)^2}  \Big[ \frac{C_0}{(k+r-1)(k+r+2)} + \frac{ C_2}{(k+r+2)^2} \Big] \cdot \norms{x^0 - x^{\star}}^2,
\end{array}
}
which proves the second bound of \eqref{eq:ARCOG_scheme_main_est2b}.
From the first and second lines  of \eqref{eq:RCHP_scheme2023_proof2}, we get
\myeqn{
\lim_{k\to\infty}(k + r - 1)(k+r+2) \cdot \Exp{\norms{x^{k+1} - x^k +  \eta_k Gx^k}^2} = 0.
}
Exploiting this limit, the third line $\lim_{k\to\infty}(k + r + 2)^2 \cdot \Exp{ \norms{x^{k+1} - x^k }^2 } = 0$ of \eqref{eq:ARCOG_scheme_main_est2b}, and \eqref{eq:RCHP_scheme00_proof4}, we can deduce that $\lim_{k\to\infty}(k + r - 1)(k+r+2) \cdot \Exp{ \norms{Gx^k}^2 } = 0$, which proves the last line of \eqref{eq:ARCOG_scheme_main_est2b}, showing a $\SmallO{1/k^2}$-convergence rate of $\Exp{ \norms{Gx^k}^2 }$.

(g)~\textit{Almost sure convergence.}
As proven in Lemma~\ref{le:ARCOG_xk_convergence}, we have that $\sets{x^k}$ is bounded almost surely.
Let $x^{\star}$ be an almost sure cluster point of $\sets{x^k}$, i.e. there exists a subsequence $\sets{x^k}_{k\in\Kc}$ such that $\lim_{k\in\Kc\to\infty}x^{k} = x^{\star}$ almost surely.
Since we have proven  in Lemma~\ref{le:ARCOG_xk_convergence} that $\lim_{k\to\infty}k^2 \norms{Gx^{k}}^2 = 0$ almost surely, we have $\lim_{k\in\Kc\to\infty} \norms{Gx^{k}} = 0$ almost surely.
By Lemma~\ref{le:ARCOG_xk_convergence}, $\lim_{k\to\infty}  \norms{x^k - x^{\star}}^2 $ exists almost surely for any $x^{\star} \in \zer{G}$.
Applying Lemma~\ref{le:opial_lemma}, we conclude that $\sets{x^k}$ converges almost surely to $x^{\star} \in \zer{G}$ (a $\zer{G}$-valued random variable).
The remaining conclusions in Statement (iii) are already proven in \eqref{eq:ARCOG_almost_sure} of Lemma~\ref{le:ARCOG_xk_convergence}.
\Eproof
\endproof

\vspace{1ex}
\begin{remark}[\textbf{Application to fixed-point problems}]\label{re:fixed_point}
We can apply \ref{eq:ARCOG_scheme} to approximate a fixed-point $x^{\star}$ of a nonexpansive operator $F$, i.e. $x^{\star} = Fx^{\star}$.
Indeed, $F$ is nonexpansive iff $G := \Id - F$ is $1/2$-co-coercive \cite[Proposition 4.11]{Bauschke2011}, and $x^{\star} = Fx^{\star}$ iff $Gx^{\star} = 0$, where $\Id$ is the identity operator.
We can apply \ref{eq:ARCOG_scheme} to solve $Gx^{\star} = 0$, which is equivalent to finding a fixed point $x^{\star}$ of $F$.
Alternatively, we can also apply \ref{eq:naRCOG_scheme} to solve this fixed-point problem, but under the condition $\iprods{x  - Fx, x - x^{\star}}  \geq -\rho \circ \norms{x - Fx}^2$ for all $x\in\R^p$ and the Lipschitz continuity of $\Id - F$. 
\end{remark}

\vspace{1ex}
\begin{remark}[\textbf{Application to operator splitting methods}]\label{re:monotone_inclusion}
Subsection~\ref{subsec:NEvsNI} shows several ways to reformulate \eqref{eq:FedNI} equivalently to \eqref{eq:NE} by imposing appropriate assumptions on $G$ and $T$.
However, specifying \ref{eq:naRCOG_scheme} and \ref{eq:ARCOG_scheme} to these reformulations are straightforward, so that we omit their derivation and convergence analysis here, and only highlight the main idea.

If the sum $G+T$ of \eqref{eq:FedNI} satisfies Assumption~\ref{as:A1} and $G$ is Lipschitz continuous, then we can reformulate \eqref{eq:FedNI} into $S_{\lambda}x = 0$, where $S_{\lambda}$ is Tseng's FBFS operator defined by \eqref{eq:S_operator2} in Subsection~\ref{subsec:FOG} below.
In this case, we can apply \ref{eq:naRCOG_scheme} to solve $S_{\lambda}x^{\star} = 0$ and obtain a new variant of \ref{eq:naRCOG_scheme} for solving a class of nonmonotone \eqref{eq:FedNI} (see also Section~\ref{sec:num_examples}).

If $G$ is $\beta$-co-coercive, then finding a solution $x^{\star}$ of \eqref{eq:FedNI} is equivalent to solving $F_{\lambda}x^{\star} = 0$, where $F_{\lambda}$ is the FBS operator defined by \eqref{eq:FBS_operator}.
In this case, if we apply \ref{eq:ARCOG_scheme} to $F_{\lambda}x^{\star} = 0$, then we obtain a new accelerated RBC-FBS algorithm for solving \eqref{eq:FedNI}.
Alternatively, we can reformulate \eqref{eq:FedNI} into $H_{\lambda}y^{\star} = 0$, where $H_{\lambda}$ is the BFS operator defined in \cite{attouch2018backward}.
If we apply \ref{eq:ARCOG_scheme} to $H_{\lambda}y^{\star} = 0$, then we obtain a new accelerated RBC-BFS variant to solve \eqref{eq:FedNI}.

If both $G$ and $T$ in \eqref{eq:FedNI} are maximally monotone, then by applying \ref{eq:ARCOG_scheme} to solve $V_{\beta}u^{\star} = 0$ defined by the DRS operator $V_{\beta}$ in \eqref{eq:DRS_operator}, we obtain a new randomized block-coordinate DRS method.
Note that it is also possible to apply \ref{eq:ARCOG_scheme} to three-operator splitting methods in \cite{Davis2015}.
\end{remark}

\beforesec
\section{Application to Finite-Sum Root-Finding Problems.}\label{sec:FedNI_algorithms}
\aftersec
The finite-sum inclusion \eqref{eq:FedNI} provides a unified template to cope with many applications in different fields, including networks, distributed systems, statistical learning, and machine learning as mentioned earlier.
In this work, we are interested in instances of \eqref{eq:FedNI} arising from \textit{federated learning} (FL), where the number of users $n$ is sufficiently large.

Briefly, FL is a recent paradigm in machine learning to train a learning algorithm that coordinates different users (also called workers or agents) to perform a given task without sharing  users' raw data.
A key point that distinguishes FL from traditional distributed or parallel algorithms is the privacy of data, where the raw data of users must not be shared.
Moreover, due to a large number of users and a heterogeneity of the system, only one or a small number of users can participate in each communication round (i.e. each iteration $k$). 
Existing FL algorithms are often designed to solve FL optimization problems such as \cite{konevcny2016federated,li2020federated,mcmahan_ramage_2017,mcmahan2021advances}. 
There is only a limited number of works studying minimax problems and other extensions, e.g., \cite{du2021fairness,sharma2022federated,tarzanagh2022fednest} but these works often require strong assumptions and generally do not cover our model  \eqref{eq:FedNI}.

Our goal  is to apply both \ref{eq:naRCOG_scheme} and \ref{eq:ARCOG_scheme} to develop new FL-type algorithms for solving \eqref{eq:FedNI}.
Our algorithms can solve a more general class of problems covered by \eqref{eq:FedNI} beyond optimization under milder assumptions than many previous works.
Moreover, they only require exchanging certain intermediate vectors between users, which fulfills the privacy requirement.
They also do not require the full participation of all users at each communication round $k$.

\beforesubsec
\subsection{Reformulation and solution characterization.}\label{subsec:FedNI_reform}
\aftersubsec
To develop new variants of  \ref{eq:naRCOG_scheme} and \ref{eq:ARCOG_scheme} for solving \eqref{eq:FedNI}, we need to reformulate \eqref{eq:FedNI} into an appropriate form.
By duplicating $x$ as $\mbf{x} := [x_1, x_2, \cdots, x_n]$, where $x_i \in\R^p$,  we can reformulate \eqref{eq:FedNI} equivalently to
\myeq{eq:FedNI_reform}{
0 \in  \mbf{G}\mbf{x}^{\star} + \mbf{T}_1\mbf{x}^{\star} + \partial{\delta}_{\Lc}(\mbf{x^{\star}}),
}
where $\mbf{G}\mbf{x} := [G_1 x_1, \cdots, G_nx_n]$, $\mbf{T}_1\mbf{x} := [n Tx_1, 0, \cdots, 0]$, and $\partial{\delta}_{\Lc}$ is the subdifferential of the indicator $\delta_{\Lc}$ of the linear subspace 
$\Lc := \sets{\mbf{x} = [x_1, \cdots, x_n] \in \R^{np} : x_i = x_1, ~\forall i = 2, \cdots, n}$.
It is obvious to show that $x^{\star}$ is a solution of \eqref{eq:FedNI} if and only if $\mbf{x}^{\star} = [x^{\star}, \cdots, x^{\star}]$ solves \eqref{eq:FedNI_reform}, see, e.g., \cite{pham2021federated}.
Moreover, $\mbf{G}$ and $\mbf{T}_1$ in \eqref{eq:FedNI_reform} are separable. 

\begin{lemma}\label{le:appox_sol}
We characterize a solution of \eqref{eq:FedNI} using \eqref{eq:FedNI_reform} in two cases as follows.
\begin{itemize}
\item[$\mathrm{(a)}$] For all $i \in [n]$, given $(x^{\star}_i, v_i^{\star}) \in \gra{G_i}$, denote $u_i^{\star} := x_i^{\star} - \lambda v^{\star}_i$, and for any $\lambda > 0$ define
\myeq{eq:u_star}{
\hat{u}^{\star} := J_{\lambda T}\Big( \tfrac{1}{n}\sum_{i=1}^nu_i^{\star} \Big).
}
Then, if $\sum_{i=1}^n\norms{x_i^{\star} - \hat{u}^{\star}}^2 = 0$,  then $x_i^{\star} = \hat{u}^{\star}$ for all $i\in [n]$,  and $\hat{u}^{\star}$ is a solution of \eqref{eq:FedNI}.

\item[$\mathrm{(b)}$]
For all $i \in [n]$, given $u^{\star}_i \in \dom{G_i}$, we denote $\hat{u}^{\star}$ as in \eqref{eq:u_star}.
Then, if we have $\sum_{i=1}^n\norms{\hat{u}^{\star} - J_{\lambda G_i}(2\hat{u}^{\star} - u_i^{\star})}^2 = 0$,  then $\hat{u}^{\star}$ is a solution of \eqref{eq:FedNI}.
\end{itemize}
\end{lemma}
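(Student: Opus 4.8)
The plan is to derive both statements directly from the definition of the resolvent $J_{\lambda G} := (\Id + \lambda G)^{-1}$ recalled in Section~\ref{sec:background}; no detour through the reformulation \eqref{eq:FedNI_reform} is actually needed, although the same conclusion can be read off from it. Throughout I would rely on $B$ being maximally monotone and on the standing hypotheses on the $A_i$ so that $J_{\lambda B}$ and $J_{\lambda A_i}$ are single-valued, and I would note in passing that the auxiliary quantities $u_i^{\star}$ and $\hat{u}^{\star}$ are well-defined for any $\lambda > 0$.

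For part $\mathrm{(a)}$, the first step is immediate: since $\sum_{i=1}^n\norms{x_i^{\star} - \hat{u}^{\star}}^2 = 0$ is a sum of nonnegative terms, every term vanishes, so $x_i^{\star} = \hat{u}^{\star}$ for all $i \in [n]$. Next, from $(x_i^{\star}, v_i^{\star}) \in \gra{A_i}$ and $x_i^{\star} = \hat{u}^{\star}$ we get $v_i^{\star} \in A_i\hat{u}^{\star}$, hence $\tfrac{1}{n}\sum_{i=1}^n v_i^{\star} \in \tfrac{1}{n}\sum_{i=1}^n A_i\hat{u}^{\star}$. The key step is to use $u_i^{\star} = x_i^{\star} - \lambda v_i^{\star} = \hat{u}^{\star} - \lambda v_i^{\star}$, so that $\tfrac{1}{n}\sum_{i=1}^n u_i^{\star} = \hat{u}^{\star} - \tfrac{\lambda}{n}\sum_{i=1}^n v_i^{\star}$, and then unwind $\hat{u}^{\star} = J_{\lambda B}\big(\tfrac{1}{n}\sum_i u_i^{\star}\big)$, which by the defining relation of the resolvent means $\tfrac{1}{\lambda}\big(\tfrac{1}{n}\sum_i u_i^{\star} - \hat{u}^{\star}\big) \in B\hat{u}^{\star}$, i.e., $-\tfrac{1}{n}\sum_i v_i^{\star} \in B\hat{u}^{\star}$. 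Adding this to $\tfrac{1}{n}\sum_i v_i^{\star} \in \tfrac{1}{n}\sum_i A_i\hat{u}^{\star}$ yields $0 \in \tfrac{1}{n}\sum_{i=1}^n A_i\hat{u}^{\star} + B\hat{u}^{\star}$, i.e., $\hat{u}^{\star}$ solves \eqref{eq:FedNI}.

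For part $\mathrm{(b)}$, I would observe that it reduces to part $\mathrm{(a)}$. The vanishing of $\sum_{i=1}^n\norms{\hat{u}^{\star} - J_{\lambda A_i}(2\hat{u}^{\star} - u_i^{\star})}^2$ forces $\hat{u}^{\star} = J_{\lambda A_i}(2\hat{u}^{\star} - u_i^{\star})$ for every $i$; unwinding this resolvent gives $\tfrac{1}{\lambda}(\hat{u}^{\star} - u_i^{\star}) \in A_i\hat{u}^{\star}$, so setting $v_i^{\star} := \tfrac{1}{\lambda}(\hat{u}^{\star} - u_i^{\star})$ we recover exactly the data of part $\mathrm{(a)}$: $v_i^{\star} \in A_i\hat{u}^{\star}$ and $u_i^{\star} = \hat{u}^{\star} - \lambda v_i^{\star}$. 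The remaining computation — namely $\tfrac{1}{n}\sum_i u_i^{\star} = \hat{u}^{\star} - \tfrac{\lambda}{n}\sum_i v_i^{\star}$, then $-\tfrac{1}{n}\sum_i v_i^{\star} \in B\hat{u}^{\star}$ from $\hat{u}^{\star} = J_{\lambda B}\big(\tfrac{1}{n}\sum_i u_i^{\star}\big)$, then summing the two memberships — is identical to part $\mathrm{(a)}$. The main (and essentially only) obstacle here is bookkeeping rather than mathematics: one must make sure the resolvents in the statement are single-valued and that the points $2\hat{u}^{\star} - u_i^{\star}$ lie in their domains, which is guaranteed by the paper's assumptions on the $A_i$ and on $B$; everything else is a short chain of resolvent identities.
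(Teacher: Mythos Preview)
Your proof is correct and follows essentially the same approach as the paper: both parts unwind the resolvent identities for $J_{\lambda B}$ and $J_{\lambda A_i}$ and combine the resulting inclusions to obtain $0 \in \tfrac{1}{n}\sum_i A_i\hat{u}^{\star} + B\hat{u}^{\star}$. The only cosmetic difference is that you explicitly frame part~(b) as a reduction to part~(a), whereas the paper repeats the short computation directly; the underlying steps are identical.
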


\proof{\textbf{Proof.}}
(a)~
Since $(x^{\star}_i, v_i^{\star}) \in \gra{G_i}$ for $i \in [n]$, we have $u^{\star}_i := x_i^{\star} - \lambda v_i^{\star} \in x_i^{\star} - \lambda G_ix_i^{\star}$ for all $i\in [n]$.
However, since $\sum_{i=1}^n\norms{x_i^{\star} - \hat{u}^{\star}}^2 = 0$, we obtain $x_i^{\star} = \hat{u}^{\star}$ for all $i \in [n]$.
Using this relation, we can show that $\frac{1}{n}\sum_{i=1}^nu_i^{\star} \in \hat{u}^{\star} - \frac{\lambda}{n}\sum_{i=1}^n G_i\hat{u}^{\star}$.
Alternatively,  $\hat{u}^{\star} := J_{\lambda T}(\frac{1}{n}\sum_{i=1}^nu^{\star}_i)$ is equivalent to $\frac{1}{n}\sum_{i=1}^nu_i^{\star} \in \hat{u}^{\star} + \lambda T\hat{u}^{\star}$.
Combining the last two expressions, we arrive at $0 \in \frac{1}{n}\sum_{i=1}^nG_i\hat{u}^{\star} + T\hat{u}^{\star}$, showing that $\hat{u}^{\star}$ solves  \eqref{eq:FedNI}.

(b)~For $\hat{u}^{\star} := J_{\lambda T}(\frac{1}{n}\sum_{i=1}^nu_i^{\star})$, if $\sum_{i=1}^n\norms{ \hat{u}^{\star} - J_{\lambda G_i}(2\hat{u}^{\star} - u_i^{\star})}^2 = 0$, then $\hat{u}^{\star} = J_{\lambda G_i}(2\hat{u}^{\star} - u_i^{\star})$, which is equivalent to $\hat{u}^{\star} - u_i^{\star} \in \lambda G_i\hat{u}^{\star}$ for $i\in [n]$.
Hence, $\hat{u}^{\star} + \lambda T\hat{u}^{\star} - \frac{1}{n}\sum_{i=1}^nu_i^{\star} \in \frac{\lambda}{n} \sum_{i=1}^nG_i\hat{u}^{\star} + \lambda T\hat{u}^{\star}$.
However, $\hat{u}^{\star} := J_{\lambda T}(\frac{1}{n}\sum_{i=1}^nu_i^{\star})$ is equivalent to $0 \in \hat{u}^{\star} + \lambda T\hat{u}^{\star} - \frac{1}{n}\sum_{i=1}^nu_i^{\star}$.
Summing up the last two expressions, we obtain $0 \in  \frac{1}{n} \sum_{i=1}^nG_i\hat{u}^{\star} +  T\hat{u}^{\star}$.
Therefore, $\hat{u}^{\star}$ is a solution to \eqref{eq:FedNI}.
\Eproof
\endproof

For a given tolerance $\epsilon > 0$, from Lemma~\ref{le:appox_sol} we can say that if $\sum_{i=1}^n\norms{x_i^{*} - \hat{u}^{*}}^2 \leq \epsilon^2$, then $\hat{u}^{*}$ can be viewed as an $\epsilon$-[approximate] solution of \eqref{eq:FedNI}.
Alternatively, if $\sum_{i=1}^n\norms{\hat{u}^{*} - J_{\lambda G_i}(2\hat{u}^{*} - u_i^{*})}^2 \leq \epsilon^2$, then $\hat{u}^{*}$ is also an $\epsilon$-solution to \eqref{eq:FedNI}.
These criteria will be used in the sequel. 

In order to apply \ref{eq:naRCOG_scheme} and \ref{eq:ARCOG_scheme} to solve \eqref{eq:FedNI_reform}, we need to reformulate it equivalently to \eqref{eq:FedNI}.
Let us  exploit the structures of $\mbf{G}$ and $ \partial{\delta}_{\Lc}$ and consider two cases: applying the FBFS operator to  \eqref{eq:FedNI_reform} in Subsection~\ref{subsec:FOG} and applying the DRS operator to  \eqref{eq:FedNI_reform} in Subsection~\ref{subsec:FedDR}, respectively.
Then, we customize \ref{eq:naRCOG_scheme} for the first setting and \ref{eq:ARCOG_scheme} for the second one. 

\beforesubsec
\subsection{Federated optimistic gradient algorithm.}\label{subsec:FOG}
\aftersubsec
For given $\mbf{T}_1$ and $\partial{\delta}_{\Lc}$ from \eqref{eq:FedNI_reform}, let us denote $\mbf{T} := \mbf{T}_1 + \partial{\delta}_{\Lc}$ and define 
\myeq{eq:S_operator2}{
\revised{\mbf{S}_{\lambda}\mbf{x} := \mbf{x} - J_{\lambda\mbf{T}}(\mbf{x} - \lambda \mbf{G}\mbf{x}) - \lambda\big(\mbf{G}\mbf{x} - \mbf{G}\circ \mbf{J}_{\lambda \mbf{T}}( \mbf{x} - \lambda \mbf{G}\mbf{x}) \big), }
}
where $\lambda > 0$ is determined later and $\circ$ is the composition operator.
Note that we use the FBFS operator $\mbf{S}_{\lambda}$ instead of a forward-backward splitting or DRS operator to avoid computing the resolvent of $G_i$ and to handle a nonmonotone case of  $G_i$ for $i \in [n]$ (see Assumption~\ref{as:A3} below).
By Lemma~\ref{le:property_of_S_oper}, $\mbf{x}^{\star}$ solves \eqref{eq:FedNI_reform} iff $\mbf{S}_{\lambda}\mbf{x}^{\star} = 0$.

First, we apply \ref{eq:naRCOG_scheme} to solve $\mbf{S}_{\lambda}\mbf{x}^{\star} = 0$, leading to the following scheme:
\myeq{eq:FedOG_scheme1}{
\mbf{x}^{k+1} := \mbf{x}^k - \tfrac{\eta}{\mbf{p}_{i_k}}\big( (\mbf{S}_{\lambda})_{[i_k]}\mbf{x}^k - \gamma (\mbf{S}_{\lambda})_{[i_k]}\mbf{x}^{k-1}\big),
}
where $(\mbf{S}_{\lambda})_{[i_k]}\mbf{x} = [0, 0, \cdots, 0, [\mbf{S}_{\lambda}\mbf{x} ]_{i_k}, 0, \cdots, 0]$ and $[\mbf{S}_{\lambda}\mbf{x} ]_{i} = x_i - [J_{\lambda\mbf{T}}(\mbf{x} - \lambda\mbf{G}\mbf{x})]_i - \lambda( G_ix_i - G_i([J_{\lambda\mbf{T}}(\mbf{x} - \lambda \mbf{A}\mbf{x})]_i))$.
Here, we also choose the same parameters $\eta$ and $\gamma$ for all block-coordinates.

Next, by Lemma~\ref{le:resolvent_oper}, if we define $u_i := x_i - \lambda G_ix_i$ for $i \in [n]$ and $\hat{u} := J_{\lambda T}(\frac{1}{n}\sum_{i=1}^nu_i)$, then  $[\mbf{S}_{\lambda}\mbf{x} ]_{i} = x_i - \hat{u} - \lambda( G_ix_i - G_i\hat{u})$.
Moreover, since at the $k$-th iteration, only $u^k_{i_k}$ is updated to $u_{i_k}^{k+1}$, we have $\bar{u}^{k+1} := \frac{1}{n}\sum_{i=1}^nu_i^{k+1} = \frac{1}{n}\sum_{i=1}^nu_i^k + \frac{1}{n}(u_{i_k}^{k+1} - u_{i_k}^k) = \bar{u}^k + \frac{1}{n}\Delta{u}^k_{i_k}$, where $\Delta{u}^k_{i_k} := u_{i_k}^{k+1} - u_{i_k}^k$.
Utilizing these expressions, we can describe our first \textit{federated optimistic gradient method}
(FedOG) for solving \eqref{eq:FedNI} as in Algorithm~\ref{alg:A1}.

\begin{algorithm}[hpt!]\caption{(Federated Optimistic Gradient Algorithm (\textbf{FedOG}))}\label{alg:A1}
\normalsize
\begin{algorithmic}[1]
\State\label{step:A1_i0}{\bfseries Initialization:} Input an initial point $x^0 \in \R^p$.
\State \hspace{2.5ex}Initialize each user $i$ with $x_i^0 = x_i^{-1}  := x^0$ and compute $u_i^0 := x^0 - \lambda G_ix^0$.
\State \hspace{2.5ex}Initialize sever with $\bar{u}^0 := \frac{1}{n}\sum_{i=1}^nu_i^0$ and $\hat{u}^0 := J_{\lambda T}(\bar{u}^0 )$.
Set  $\hat{u}^{-1} := \hat{u}^0$.
\State\hspace{0ex}\label{step:A1_o1}{\bfseries For $k := 0,\cdots, k_{\max}$ do}
\vspace{0.25ex}   
\State\hspace{2.5ex}\label{step:A1_o2}Sample an active user $i_k \in [n]$ following the probability distribution \eqref{eq:prob_cond1}.
\State\hspace{2.5ex}\label{step:A1_o3}[\textit{Communication}] Server sends $\hat{u}^k$ to user $i_k$.
\State\hspace{2.5ex}\label{step:A1_o4}[\textit{Local update}] User $i_k$ updates its iterates $x_{i_k}^k$ and $u_{i_k}^k$ as
\vspace{-0.5ex}
\myeqn{
\arraycolsep=0.2em
\left\{\begin{array}{lcl}
g_{i_k}^{k-1} & := &  x^{k-1}_{i_k} - \hat{u}^{k-1} - \lambda(G_{i_k}x^{k-1}_{i_k} - G_{i_k}\hat{u}^{k-1}), \vspace{1ex}\\
g_{i_k}^k & := &  x^k_{i_k} - \hat{u}^k - \lambda(G_{i_k}x^k_{i_k} - G_{i_k}\hat{u}^k), \vspace{1ex}\\
x^{k+1}_{i_k} & := & x_{i_k}^k - \frac{\eta }{\mbf{p}_{i_k}}(  g_{i_k}^k - \gamma g_{i_k}^{k-1}), \vspace{1ex}\\
u_{i_k}^{k+1} & := & x^{k+1}_{i_k} - \lambda G_{i_k}x^{k+1}_{i_k}.
\end{array}\right.
\vspace{-0.5ex}
}
\State\hspace{2.5ex}\label{step:A1_o6}[\textit{Communication}] User $i_k$ sends  $\Delta{u}_{i_k}^k := u^{k+1}_{i_k} - u^k_{i_k}$ back to server.
\State\hspace{2.5ex}\label{step:A1_o7}[\textit{Server update}] Server updates $\bar{u}^{k+1} := \bar{u}^k + \tfrac{1}{n}\Delta{u}^k_{i_k}$ and $\hat{u}^{k+1} := J_{\lambda T}(\bar{u}^{k+1})$. 
\State\hspace{0ex}{\bfseries End For}
\end{algorithmic}
\end{algorithm}

Finally, we apply Theorem~\ref{th:RCOG_convergence} to establish the convergence of Algorithm~\ref{alg:A1}.
However, we need to interpret Assumption~\ref{as:A1} in the context of \eqref{eq:FedNI} and \eqref{eq:S_operator2} as follows. 

\begin{assumption}\label{as:A3}
For \eqref{eq:FedNI}, each $G_i$ is $L$-Lipschitz continuous for $i\in [n]$ and $T$ is maximally monotone.
For any $(x_1, u_1) \in \gra{G_1 + n T}$ and $(x_i, u_i) \in \gra{G_i}$ for $i = 2, \cdots, n$ and $s_i \in \Espace$ $(i\in [n])$ such that $\sum_{i=1}^ns_i = 0$,  there exist $\rho > 0$ and  a solution $x^{\star} \in \zer{G+T}$ such that $8L\rho \leq 1$ and $\iprods{u_i + s_i, x_i - x^{\star}} \geq -\rho\norms{u_i + s_i}^2$ for all $i\in [n]$.
\end{assumption}
Note that Assumption~\ref{as:A3} does not guarantee the monotonicity of $G_i$.
It is equivalent to a $\rho$-``star'' co-hypomonotone of $\mbf{G} + \mbf{T}_1 + \partial{\delta}_{\Lc}$ in \eqref{eq:FedNI_reform} (or equivalently, a weak Minty solution of \eqref{eq:FedNI_reform} exists).
Hence, it also covers a class of nonmonotone operators $G_i$ for all $i \in [n]$.

\begin{theorem}\label{th:FedOG_convergence}
\revised{
Assume that $\zer{G + T}\neq\emptyset$ and Assumption~\ref{as:A3} holds for \eqref{eq:FedNI}.
Let $\set{ (x_i^k, \hat{u}^k)}$ be generated by Algorithm~\ref{alg:A1} so that its parameters satisfy
\myeq{eq:FedOG_param}{
\begin{array}{l}
\underline{\lambda} \leq \lambda \leq \bar{\lambda}, \quad \frac{4}{4 + \mbf{p}_{\min}} <  \gamma < 1, \quad \text{and} \quad 0 < \eta \leq \frac{(4+\mbf{p}_{\min} )\sqrt{\mbf{p}_{\min}}}{8\gamma L_s},
\end{array}
}
where $\underline{\lambda} := \frac{1 - 2\rho L - \sqrt{1 - 8L\rho}}{2L(1+L\rho)} > 0$, $\bar{\lambda} := \frac{1 - 2\rho L + \sqrt{1 - 8L\rho}}{2L(1+L\rho)} \geq \underline{\lambda}$, and $L_s := (1 + \lambda L)(2 + \lambda L)$.
Then, for given $\psi_{\min} := \frac{\mbf{p}_{\min}^2 }{16L_s^2} > 0$, the following bound holds:
\myeq{eq:FedOG_convergence}{
\frac{1}{K+1}\sum_{k=0}^K \E\Big[ \sum_{i=1}^n \norms{x_i^k - \hat{u}^k}^2 \Big] \leq \frac{3 \norms{x^0 - x^{\star}}^2 }{\psi_{\min} (1 - L\lambda)^2 ( K+1) }.
}
}
\end{theorem}

\proof{\textbf{Proof.}}
First, given $\mbf{G}$ in \eqref{eq:FedNI_reform} and $\mbf{T}$ in \eqref{eq:S_operator2}, since $G_i$ is $L$-Lipschitz continuous for all $i \in [n]$, so is $\mbf{G}$. 
Alternatively, since $T$ is maximally monotone, so is $\mbf{T}$.
Moreover, $x^{\star}$ solves \eqref{eq:FedNI} iff $\mbf{x}^{\star} = [x^{\star}, \cdots, x^{\star}]$ solves \eqref{eq:FedNI_reform}.

Next, since $\mbf{S}_{\lambda}$ defined by \eqref{eq:S_operator2} is the same as the one in \eqref{eq:S_operator2_new} of Lemma~\ref{le:property_of_S_oper},  for any $\mbf{x} = [x_1, \cdots, x_n]$, if we take any $\mbf{v} \in \mbf{G}\mbf{x} + \mbf{T}_1\mbf{x} + \delta_{\Lc}\mbf{x}$, then by Assumption~\ref{as:A3}, we have $v_1 = u_1 + s_1 \in G_1x_1 + nTx_1 + s_1$ and $v_i = u_i + s_i \in G_ix_i + s_i$ for $i=2,\cdots, n$ and $\sum_{i=1}^ns_i = 0$.
Thus the condition$\iprods{u_i + s_i, x_i - x^{\star}} \geq -\rho\norms{u_i + s_i}^2$ of Assumption~\ref{as:A3} is equivalent to the condition $\iprods{\mbf{v}, \mbf{x} - \mbf{x}^{\star}} \geq -\rho\norms{\mbf{v}}^2$ of Lemma~\ref{le:property_of_S_oper}.

Now, since $8L\rho \leq 1$, let choose $\underline{\lambda} := \frac{1 - 2\rho L - \sqrt{1 - 8L\rho}}{2L(1+L\rho)} \leq \lambda \leq \bar{\lambda} := \frac{1 - 2\rho L + \sqrt{1 - 8L\rho}}{2L(1+L\rho)}$.
Then, it is obvious to check that $\underline{\lambda} > 0$ and $L\lambda  \leq L\bar{\lambda} < 1$.
By Lemma~\ref{le:property_of_S_oper}, we have $\iprods{\mbf{S}_{\lambda}\mbf{x}, \mbf{x} - \mbf{x}^{\star}} \geq 0$. 

From our derivation, we know that Algorithm~\ref{as:A1} is exactly equivalent to \ref{eq:naRCOG_scheme} applying to solve $\mbf{S}_{\lambda}\mbf{x}^{\star} = 0$ for $\mbf{S}_{\lambda}$ defined by \eqref{eq:S_operator2}.
By Theorem~\ref{th:RCOG_convergence}, since $\sigma_i = 1$, we can choose $\frac{4}{4 + \mbf{p}_{\min}} <  \gamma < 1$ and  $0 < \eta \leq \frac{(4+\mbf{p}_{\min} )\sqrt{\mbf{p}_{\min}}}{8\gamma L_s}$ as shown in \eqref{eq:FedOG_param}, where $L_s := (1 + \lambda L)(2 + \lambda L) > 0$.
Then, given $\psi_{\min}$ as in Theorem~\ref{th:FedOG_convergence}, utilizing \eqref{eq:RBC_PEG_ergodic_rate}, we get 
\myeqn{
\frac{1}{K+1}\sum_{k=0}^{K} \E\big[\norms{\mbf{S}_{\lambda}\mbf{x}^k}^2 \big] \leq  \frac{3}{\psi_{\min} ( K+1) }\norms{\mbf{x}^0 - \mbf{x}^{\star}}^2.
}
Finally, by the $L$-Lipschitz continuity of $G_i$, we have $\norms{\mbf{S}_{\lambda}\mbf{x}^k}^2 = \sum_{i=1}^n\norms{x_i^k - \hat{u}^k - \lambda(G_ix^k_i - G_i\bar{u}^k)}^2 \geq \sum_{i=1}^n(1 - L\lambda)^2\norms{x_i^k - \hat{u}^k}^2$.
In addition, it is obvious that $\norms{\mbf{x}^0 - \mbf{x}^{\star}}^2 = n\norms{x^0 - x^{\star}}^2$.
Using these relations into the last inequality, we obtain \eqref{eq:FedOG_convergence}.
\Eproof
\endproof

\beforesubsec
\subsection{Accelerated federated Douglas-Rachford splitting algorithm.}\label{subsec:FedDR}
\aftersubsec
We consider the case $G_i$ ($i\in [n]$) in \eqref{eq:FedNI} are maximally monotone and possibly multivalued.
Hence, we use the following DRS operator to reformulate \eqref{eq:FedNI_reform} into \eqref{eq:FedNI}:
\myeq{eq:DR_residual}{
\mbf{V}_{\beta}\mbf{u} := \tfrac{1}{\beta}\left( J_{\beta\mbf{T}}\mbf{u} - J_{\beta\mbf{G}}(2 J_{\beta\mbf{T}}\mbf{u} - \mbf{u}) \right),
}
where $\beta > 0$ is given, and $J_{\beta \mbf{G}}$ and $J_{\beta\mbf{T}}$ are the resolvents of $\beta\mbf{G}$ and $\beta\mbf{T}$, respectively with $\mbf{T} := \mbf{T}_1 + \partial{\delta}_{\Lc}$.
Here, we also use $\mbf{u} := [u_1, \cdots, u_n]$.
As shown in \cite{tran2022connection}, $\mbf{V}_{\beta}$ is $\beta$-co-coercive and $\dom{\mbf{V}_{\beta}} = \R^{np}$.
Moreover, $\mbf{x}^{\star}$ is a solution of \eqref{eq:FedNI_reform} iff there exists $\mbf{u}^{\star}$ such that $\mbf{V}_{\beta}\mbf{u}^{\star} = 0$ and $\mbf{x}^{\star} = J_{\beta \mbf{T}}\mbf{u}^{\star}$.

First, we directly apply  \ref{eq:ARCOG_scheme} to solve $\mbf{V}_{\beta}\mbf{u}^{\star} = 0$ and exploit the separable structure of $\mbf{G}$ and the form of $\mbf{T}$ to obtain the following scheme:
\myeq{eq:ArcDRS_scheme}{
\arraycolsep=0.2em
\left\{\begin{array}{lcl}
\hat{u}^{k-1} & := & J_{\beta T}\big( \frac{1}{n}\sum_{i=1}^nu_i^{k-1} \big), \quad\qquad\qquad\text{(previously computed -- no cost)}\vspace{1ex}\\
 \hat{u}^k & := & J_{\beta T}\big( \frac{1}{n}\sum_{i=1}^nu_i^k \big), \qquad\qquad\qquad\text{(centralized step on server)}\vspace{1ex}\\
\hat{g}^{k-1}_{i_k} &:= & \hat{u}^{k-1} - J_{\beta G_{i_k}}(2\hat{u}^{k-1} - u_{i_k}^{k-1}), \ \quad \text{only block $i_k$}, \vspace{1ex}\\
g^k_{i_k} &:= & \hat{u}^k -  J_{\beta G_{i_k}}(2\hat{u}^k - u^k_{i_k}), \ \ \, \quad\qquad \text{only block $i_k$}, \vspace{1ex}\\
u^{k+1}_i & := & 
\left\{\begin{array}{ll}
u^k_i + \theta_k(u^k_i - u^{k-1}_i) - \frac{\eta_k }{\beta\mbf{p}_{i}}\left( g^k_i - \gamma_k\hat{g}^{k-1}_i \right), &\text{if $i = i_k$}, \vspace{1ex}\\
u^k_i + \theta_k(u^k_i - u^{k-1}_i), &\text{otherwise},
\end{array}\right.
\end{array}\right.
\tag{AcFedDR}
}
where $\mbf{u}^0 := [u^0, \cdots, u^0]$ with $u^0 \in \R^p$ is given, $\mbf{u}^{-1} := \mbf{u}^0$, $i_k \in [n]$ is randomly sampled  based on \eqref{eq:prob_cond1}, and $\theta_k$, $\eta_k$, and $\gamma_k$ are chosen as in \ref{eq:ARCOG_scheme}.
We call this scheme the \underline{ac}celerated \underline{fed}erated \underline{DR} method (abbreviated by \textbf{\ref{eq:ArcDRS_scheme}}).

Next, we use the trick in Subsection~\ref{subsec:pracitice_RBC_scheme} to eliminate $u_i^k$ in  \ref{eq:ArcDRS_scheme}.
Since $u_i^k = z^k_i + c_kw^k_i$, we have $\bar{u}^k := \frac{1}{n}\sum_{i=1}^nu_i^k = \frac{1}{n}\sum_{i=1}^n(z^k_i + c_kw_i^k) = \bar{z}^k + c_k\bar{w}^k$, where $\bar{z}^k := \frac{1}{n}\sum_{i=1}^nz^k_i$ and $\bar{w}^k := \frac{1}{n}\sum_{i=1}^nw_i^k$.
However, at each iteration $k$, only $w_{i_k}^k$ and $z_{i_k}^k$ are updated, we have
\myeqn{
\arraycolsep=0.2em
\left\{\begin{array}{lcl}
\bar{z}^{k+1} := \frac{1}{n}\sum_{i=1}^nz^{k+1}_i  = \frac{1}{n}\sum_{i=1}^nz_i^k + \frac{1}{n}(z^{k+1}_{i_k} - z^k_{i_k}) = \bar{z}^k + \frac{1}{n}\Delta{z}^k_{i_k}, \vspace{1.5ex}\\
\bar{w}^{k+1} := \frac{1}{n}\sum_{i=1}^nw^{k+1}_i  = \frac{1}{n}\sum_{i=1}^nw_i^k + \frac{1}{n}(w^{k+1}_{i_k} - w^k_{i_k}) = \bar{w}^k + \frac{1}{n}\Delta{w}^k_{i_k},
\end{array}\right.
}
where $\Delta{z}_{i_k}^k := z^{k+1}_{i_k} - z^k_{i_k}$ and $\Delta{w}^k_{i_k} := w^{k+1}_{i_k} - w^k_{i_k}$.

Now, we are ready to specify  \ref{eq:ArcDRS_scheme} for solving \eqref{eq:FedNI} as in Algorithm~\ref{alg:A2}.

\begin{algorithm}[hpt!]\caption{(Accelerated Federated Douglas-Rachford Algorithm (\textbf{\ref{eq:ArcDRS_scheme}}))}\label{alg:A2}
\normalsize
\begin{algorithmic}[1]
\State\label{step:i0}{\bfseries Initialization:} Input an initial point $u^0 \in \R^p$ and set $c_0 := c_{-1} := 0$ and $\tau_0 := 1$.
\State \hspace{2.5ex}Initialize each user $i$ with $z_i^0 = z_i^{-1} := u^0$ and $w^0_i = w_i^{-1} := 0$ for $i \in [n]$.
\State \hspace{2.5ex}Initialize sever with $\hat{u}^0 = \hat{u}^{-1} := u^0$, $\bar{z}^0 := 0$, and $\bar{w}^0 := 0$.
\State\hspace{0ex}\label{step:o1}{\bfseries For $k := 0,\cdots, k_{\max}$ do}
\vspace{0.25ex}   
\State\hspace{2.5ex}\label{step:o2}Sample an active user $i_k \in [n]$ following the probability distribution \eqref{eq:prob_cond1}.
\State\hspace{2.5ex}\label{step:o3}[\textit{Communication}] Server sends $\hat{u}^k$ and $\hat{u}^{k-1}$ to the active user $i_k$.
\State\hspace{2.5ex}\label{step:o4}[\textit{Local update}] User $i_k$ updates its iterates $w_{i_k}^{k+1}$ and $z^{k+1}_{i_k}$ as
\vspace{-0.5ex}
\myeqn{
\arraycolsep=0.2em
\left\{\begin{array}{lcl}
\hat{g}^{k-1}_{i_k} &:= & \hat{u}^{k-1} -  J_{\beta G_{i_k}}(2\hat{u}^{k-1} - z^{k-1}_{i_k} - c_{k-1}w^{k-1}_{i_k}), \vspace{1ex}\\
g^k_{i_k} &:= & \hat{u}^k -  J_{\beta G_{i_k}}(2\hat{u}^k - z^k_{i_k} - c_kw^k_{i_k}), \vspace{1ex}\\
w^{k+1}_{i_k} &:= & w^k_{i_k} +  \Delta{w}^k_{i_k}, \quad \text{where} \quad \Delta{w}^k_{i_k} := - \frac{\eta_k}{\beta\tau_{k+1}\mbf{p}_{i_k}}( g^k_{i_k} - \gamma_k \hat{g}^{k-1}_{i_k}), \vspace{1ex}\\
z^{k+1}_{i_k} &:= & z^k_{i_k} + \Delta{z}_{i_k}^k, \quad \text{where}\quad \Delta{z}_{i_k}^k := \frac{\eta_k c_k}{\beta\tau_{k+1}\mbf{p}_{i_k}}( g^k_{i_k} - \gamma_k\hat{g}^{k-1}_{i_k}).
\end{array}\right.
\vspace{-0.5ex}
}
\State\hspace{2.5ex}\label{step:o5}[\textit{Communication}] User $i_k$ sends $\Delta{w}^k_{i_k}$ and $\Delta{z}_{i_k}^k$ back to server.
\State\hspace{2.5ex}\label{step:o6}[\textit{Server update}] Server updates $\bar{w}^{k+1} := \bar{w}^k + \frac{1}{n}\Delta{w}^k_{i_k}$ and $\bar{z}^{k+1} := \bar{z}^k + \frac{1}{n}\Delta{z}^k_{i_k}$.
\Statex\hspace{18ex}\label{step:o6b}Then, it computes $\hat{u}^{k+1} := J_{\beta T}(\bar{z}^{k+1} + c_{k+1}\bar{w}^{k+1})$.
\State\hspace{0ex}{\bfseries End For}
\end{algorithmic}
\end{algorithm}

\begin{remark}\label{re:A1_and_A2_algs}
Both Algorithms~\ref{alg:A1} and \ref{alg:A2} are still synchronous but they only require the participation of one user $i_k$ $($or a subset of users$)$ at each communication round $k$.
They share some similarity  as SAGA \cite{Defazio2014} and its variant for co-coercive equations in \cite{davis2022variance}.
However, our algorithms can solve a more general class of problems  \eqref{eq:FedNI}, where $G$ is not necessarily monotone or co-coercive as in \cite{davis2022variance}.
In particular, since \eqref{eq:FedNI} is more general than that of \cite{davis2022variance}, we can specialize our algorithms to solve several special cases of \eqref{eq:FedNI} as in \cite{davis2022variance}.
Nevertheless, we omit these derivations.
\end{remark}

Finally, we are ready to state the convergence of  Algorithm~\ref{alg:A2}.

\begin{theorem}\label{th:AcFedDR_convergence}
Assume that $\zer{G + T}\neq\emptyset$ and $G_i$ $(i\in [n])$ and $T$ in \eqref{eq:FedNI} are maximally monotone.
For given $r > 3$, $\beta > 0$, and $0 < \omega <  2\beta \cdot \min\{ \mbf{p}_i : i \in [n] \}$, let $\sets{ (w_i^k , z_i^k)}$ be generated by  Algorithm~\ref{alg:A2} using the update rules of $\theta_k$, $\eta_k$, and $\gamma_k$ as in \eqref{eq:ARCOG_scheme_update_pars}.
Let $u_i^k := z_i^k + c_kw^k_i$ for all $i\in[n]$ and $\hat{u}^k := J_{\beta T}\left(\frac{1}{n}\sum_{i=1}^nu_i^k \right)$.
Then, we have the following summable results:
\myeq{eq:ARCOG_scheme_result1_app3}{
\hspace{-2ex}
\arraycolsep=0.2em
\begin{array}{lcl}
\sum_{k=0}^{+\infty}(k + r + 1) \cdot \Exp{ \sum_{i=1}^n\norms{\hat{u}^k - J_{\beta G_i}(2\hat{u}^k - u_i^k) }^2 } & \leq & n \beta^2 C_1 \cdot \norms{u^0 - u^{\star}}^2, \vspace{1.5ex}\\
\sum_{k=0}^{+\infty}(2k + r + 1) \cdot \Exp{\sum_{i=1}^n\norms{u_i^{k+1} - u_i^k}^2} & \leq  & n [\omega r(r-1)\Lambda_0 + r + 1 ] \cdot \norms{u^0 - u^{\star}}^2,
\end{array}
\hspace{-3ex}
}
where $\Lambda_0$ and $C_1$ are given in Theorem~\ref{th:ARCOG_convergence} with $\bar{\beta}_i := \beta$ for $i \in [n]$.

Moreover, for given $C_0$ and $C_2$ in Theorem~\ref{th:ARCOG_convergence}, the following $\BigO{1/k^2}$-rates holds:
\myeq{eq:ARCOG_scheme_main_est2b_app3_BigO}{
\arraycolsep=0.3em
\left\{\begin{array}{lclclcl}
\Exp{ \sum_{i=1}^n\norms{\hat{u}^k - J_{\beta G_i}(2\hat{u}^k - u_i^k) }^2 }  & \leq &   \frac{2 n \beta^2 (r+1)^2 (C_0 +  C_2)  }{(r-1)^2 (k+ r - 1)(k + r + 2) } \ \cdot \norms{u^0 - u^{\star}}^2, \vspace{1.5ex}\\
\Exp{ \sum_{i=1}^n \norms{u_i^{k+1} - u_i^k}^2} & \leq &  \frac{ n \omega^2 C_2}{2(k + r + 2)^2} \cdot \norms{u^0 - u^{\star}}^2, \vspace{1.5ex}\\
\Exp{\norms{\hat{u}^{k+1} - \hat{u}^k }^2} & \leq &   \frac{ \omega^2 C_2}{2(k + r + 2)^2} \cdot \norms{u^0 - u^{\star}}^2, 
\end{array}\right.
}
and the following $\SmallO{1/k^2}$-convergence rates also hold:
\myeq{eq:ARCOG_scheme_main_est2b_app3_SmallO}{
\arraycolsep=0.3em
\left\{\begin{array}{lclclcl}
{\displaystyle\lim_{k\to\infty}} \Big\{ (k + r - 1)(k + r + 2) \cdot \Exp{ \sum_{i=1}^n\norms{\hat{u}^k - J_{\beta G_i}(2\hat{u}^k - u_i^k) }^2 } \Big\} & = & 0, \vspace{1ex}\\
{\displaystyle\lim_{k\to\infty}} \Big\{ (k + r + 2)^2 \cdot \Exp{ \sum_{i=1}^n \norms{u_i^{k+1} - u_i^k}^2} \Big\} & = & 0, \vspace{1ex}\\
{\displaystyle\lim_{k\to\infty}} \Big\{ (k + r + 2)^2 \cdot \Exp{\norms{\hat{u}^{k+1} - \hat{u}^k }^2} \Big\} & = &   0.
\end{array}\right.
}
Moreover, the iterate sequence $\sets{\hat{u}^k}$ also converges almost surely to $x^{\star} \in \zer{G+T}$.
\end{theorem}

\proof{\textbf{Proof.}}
Since Algorithm~\ref{alg:A2} is equivalent to \ref{eq:ARCOG_scheme} applying to $\mbf{V}_{\beta}\mbf{u}^{\star} = 0$ for $\mbf{V}_{\beta}$ defined by \eqref{eq:DR_residual}, utilizing Theorem~\ref{th:ARCOG_convergence}, we obtain $\sum_{k=0}^{+\infty}(k + r + 1) \cdot \Exp{\norms{ \mbf{V}_{\beta}\mbf{u}^k }^2 } \leq C_1 \norms{\mbf{u}^0 - \mbf{u}^{\star}}^2$.
However, by Lemma~\ref{le:resolvent_oper}, we have $\sum_{i=1}^n\norms{\hat{u}_i^k - J_{\beta G_i}( 2\hat{u}^k - u_i^k)}^2 = \beta^2\norms{ \mbf{V}_{\beta}\mbf{u}^k }^2$.
Combining both expressions and the fact that $\norms{\mbf{u}^0 - \mbf{u}^{\star}}^2 = n \norms{u^0 - u^{\star}}^2$, we obtain the first line of \eqref{eq:ARCOG_scheme_result1_app3}.

The second line of \eqref{eq:ARCOG_scheme_result1_app3} is a direct consequence of the first line of \eqref{eq:ARCOG_scheme_result1} and $\norms{\mbf{u}^0 - \mbf{u}^{\star}}^2 = n \norms{u^0 - u^{\star}}^2$.
The bounds in \eqref{eq:ARCOG_scheme_main_est2b_app3_BigO} and the limits in \eqref{eq:ARCOG_scheme_main_est2b_app3_SmallO}  are consequences of \eqref{eq:ARCOG_scheme_main_est2b} and the fact that 
\myeqn{
\arraycolsep=0.2em
\begin{array}{lcl}
\norms{\hat{u}^{k+1} - \hat{u}^k}^2 & = & \norms{J_{\beta T}(\frac{1}{n}\sum_{i=1}^nu_i^{k+1}) - J_{\beta T}(\frac{1}{n}\sum_{i=1}^nu_i^k ) }^2 \vspace{1ex}\\
& \leq & \frac{1}{n^2}\norms{\sum_{i=1}^n( u_i^{k+1} - u_i^k)}^2 \vspace{1ex}\\
& \leq & \frac{1}{n}\sum_{i=1}^n\norms{u_i^{k+1} - u_i^k}^2.
\end{array}
} 

Similar to the proof of Statement (iii) in Theorem~\ref{th:ARCOG_convergence} we can show that $\sets{\mbf{u}^k}$ converges to $\mbf{u}^{\star} \in \zer{\mbf{V}_{\beta}}$ almost surely.
However, we have 
\myeqn{
\arraycolsep=0.2em
\begin{array}{lcl}
\norms{\hat{u}^k - x^{\star}}^2 & = &  \norms{J_{\beta T}(\frac{1}{n}\sum_{i=1}^nu_i^k) - J_{\beta T}(u^{\star}) }^2 \leq \frac{1}{n^2}\norms{\sum_{i=1}^n( u_i^k - u_i^{\star})}^2 \vspace{1ex}\\
& \leq & \frac{1}{n}\sum_{i=1}^n\norms{u_i^k - u_i^{\star}}^2 = \frac{1}{n}\norms{\mbf{u}^k - \mbf{u}^{\star}}^2.
\end{array}
} 
This relation and the almost sure convergence of $\sets{\mbf{u}^k}$ to $\mbf{u}^{\star}$ imply that $\sets{\hat{u}^k}$ also converges almost surely to $x^{\star} \in \zer{G + T}$, a solution of \eqref{eq:FedNI}.
\Eproof
\endproof

By  Lemma~\ref{le:appox_sol}, we can see that $\hat{u}^k$ obtained from either Theorem~\ref{th:FedOG_convergence} or Theorem~\ref{th:AcFedDR_convergence} is an approximation to a solution $x^{\star}$ of \eqref{eq:FedNI}, but with different convergence rates.

\beforesec
\section{Numerical Experiments.}\label{sec:num_examples}
\aftersec
In this section, we verify both \ref{eq:naRCOG_scheme} and \ref{eq:ARCOG_scheme}  and compare them with the most related existing methods  in \cite{combettes2015stochastic,daskalakis2018training,kotsalis2022simple,peng2016arock}.
We also compare Algorithm~\ref{alg:A1} and Algorithm~\ref{alg:A2} with \texttt{FedAvg} a baseline FL algorithm in  \cite{mcmahan2017communication}.
All algorithms are implemented in Python and run on a MacBookPro. 2.8GHz Quad-Core Intel Core I7, 16Gb Memory.

\beforesubsec
\subsection{Minimax problems.}\label{subsec:num_exp_RCOG}
\aftersubsec
Our first mathematical model for numerical experiments is the following minimax problem, which is possibly nonconvex-nonconcave:
\myeq{eq:minimax_exam1}{
\min_{u \in \R^{p_1}}\max_{v \in \R^{p_2}}\Big\{ \Lc(u, v) := f(u) + \tfrac{1}{2} u^{\top}Pu + b^{\top}u + u^{\top}Hv - \tfrac{1}{2}v^{\top}Qv - c^{\top}v - g(v) \Big\},
}
where $f : \R^{p_1}\to\Rext$ and $g : \R^{p_2}\to\Rext$ are two proper, closed, and convex functions, often used to handle constraints or regularizers,  $P\in \R^{p_1\times p_1}$ and $Q \in \R^{p_2\times p_2}$ are two given symmetric matrices, $H \in \R^{p_1\times p_2}$ is a given matrix, and $b\in\R^{p_1}$ and $c \in \R^{p_2}$ are given vectors.

We denote $x := [u, v] \in \R^p$ for $p := p_1 + p_2$ as the concatenation of the primal variable $u$ and its dual variable $v$.
We also define  $\mbf{G} := [[P, H], [-H, Q]]$ as the KKT matrix in $\R^{p\times p}$ formed from 4 blocks $P$, $H$, $-H$, and $Q$, and $\mbf{g} := [b, c] \in \R^p$ as the concatenation of $b$ and $c$.
When $f$ and $g$ are present, we denote $T := [\partial{f}; \partial{g}]$ as a maximally monotone mapping formed from the subdifferentials of $f$ and $g$.
Then,  the optimality condition of \eqref{eq:minimax_exam1} can be written as $0 \in \mbf{G}x + \mbf{g} + Tx \equiv Gx + Tx$, which is a special form of \eqref{eq:FedNI}, where $Gx := \mbf{G}x + \mbf{g}$ is a linear mapping.
If $T = 0$, i.e. $f$ and $g$ are absent in \eqref{eq:minimax_exam1}, then this optimality condition reduces to $Gx = 0$ (i.e. $\mbf{G}x + \mbf{g} = 0$ -- a linear system) as a special case of \eqref{eq:NE}.
Since $H \neq 0$,  $\mbf{G}$ is non-symmetric.
In addition, $P$ and $Q$ are possibly not positive semidefinite, implying that $G$ may not be monotone.

\beforesubsec
\subsection{Unconstrained minimax problems with synthetic data.}\label{subsec:unconstrained_minimax}
\aftersubsec
Since our \ref{eq:naRCOG_scheme} can solve a class of non-monotone \eqref{eq:NE}, we first apply it to solve a nonconvex-nonconcave instance of \eqref{eq:minimax_exam1} when $f = 0$ and $g = 0$.
As we are not aware of existing coordinate methods for non-monotone \eqref{eq:NE}, we only compare  \ref{eq:naRCOG_scheme} with its deterministic counterpart, i.e. the optimistic gradient method \eqref{eq:OG_scheme} in Subsection~\ref{subsec:FRB_splitting}.
Next, we apply our \ref{eq:ARCOG_scheme} to solve a convex-concave instance of \eqref{eq:minimax_exam1} as a special case of \eqref{eq:NE} under Assumption~\ref{as:A2} and compare it with methods in \cite{combettes2015stochastic,kotsalis2022simple,peng2016arock}.

Our synthetic data is generated as follows.
All matrices and vectors are generated randomly from standard normal distribution.
Matrix $P$ is averaged from $N$ samples as $P := \frac{1}{N}\sum_{i=1}^NU_iD_iU_i^{\top}$, where $U_i$ is an orthonormal matrix obtained from a QR factorization of a random matrix, and $D_i := \mathrm{diag}(d_{i1}, \cdots, d_{ip_1})$ is a diagonal matrix such that $d_{ij}$ is generated randomly and then clipped so that $d_{ij} \geq \underline{d}$.
Here, the lower bound $\underline{d}$ is used to control the nonconvexity-nonconcavity of \eqref{eq:minimax_exam1}.
If $\underline{d} < 0$, then \eqref{eq:minimax_exam1} is possibly nonconvex-nonconcave, or equivalently $G$ is possibly nonmonotone. 
Matrix $Q$ is also generated by the same way as $P$.
Matrix $H := \frac{1}{N}\sum_{i=1}^NH_i$, and vectors $b := \frac{1}{N}\sum_{i=1}^Nb_i$ and $c := \frac{1}{N}\sum_{i=1}^Nc_i$ are generated randomly.
Note that these finite-sums often appear in statistical learning and machine learning applications such as linear least-squares and support vector machine.

In our first test, we carry out two experiments: \textit{Experiment} 1 with $N=5000$, $p=1000$, and $\underline{d} = -0.1$, and \textit{Experiment 2} with $N = 10000$, $p = 2000$, and $\underline{d} := -0.1$.
Since it is challenging to estimate the parameter $\rho$ in Assumption~\ref{as:A1} and $\bar{\beta}$ in Assumption~\ref{as:A2}, we have manually tuned the stepsizes in all algorithms by a grid search to obtain the best possible performance for these algorithms in each experiment.
We always choose the same starting point $x^0 := 0.01\cdot \texttt{ones}(p)$.

For each experiment, we run \ref{eq:OG_scheme} and three variants of \ref{eq:naRCOG_scheme} corresponding to $n=10$, $n=50$, and $n=100$ block-coordinates on 10 problem instances and report the mean of the relative operator norm $\norms{Gx^k}/\norms{Gx^0}$ against the number of full dimension passes (i.e. one pass corresponds to $n$-block coordinates, which is equal to the full dimension $p$).
The results are depicted in Figure~\ref{fig:exam1} for two experiments after 200 full dimension passes (the same cost as 200 iterations of \ref{eq:OG_scheme}).

\begin{figure}[ht!]
\vspace{-3ex}
\centering
\includegraphics[width=1\textwidth]{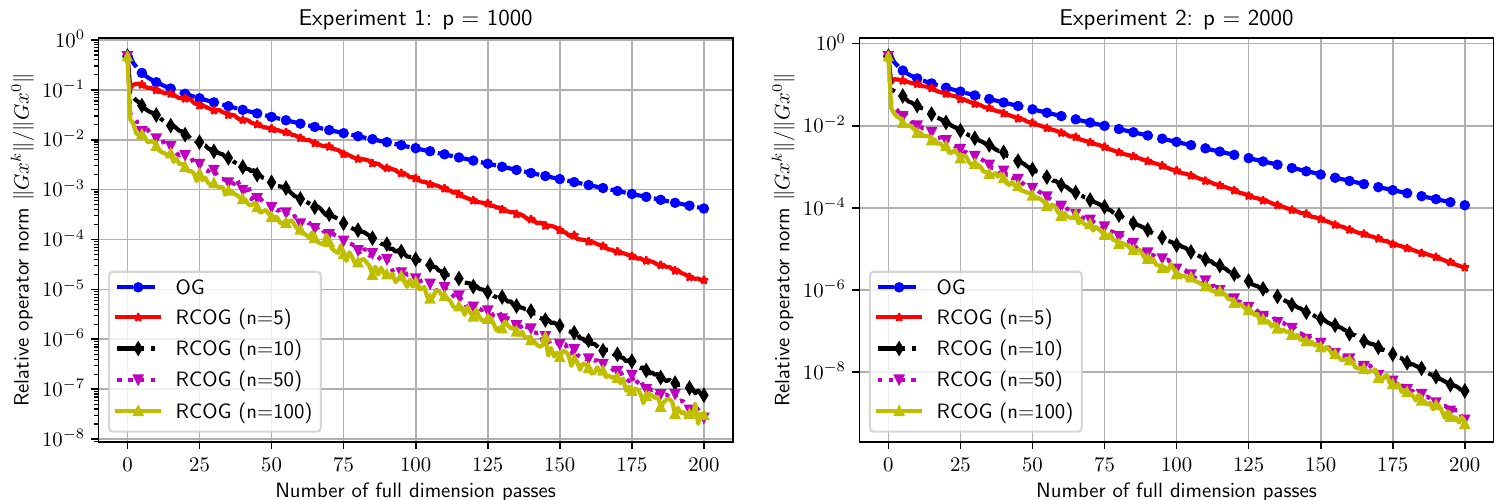}
\vspace{-2ex}
\caption{
Comparison of \ref{eq:OG_scheme} and three variants of \ref{eq:naRCOG_scheme} to solve a unconstrained nonconvex-nonconcave instance of \eqref{eq:minimax_exam1} on $2$ experiments.
The plot reveals  the mean of $10$ problem instances.
}
\label{fig:exam1}
\vspace{-2.5ex}
\end{figure}

As we can see from Figure~\ref{fig:exam1} that our \ref{eq:naRCOG_scheme} with $n \geq 10$ works well, and outperforms \ref{eq:OG_scheme}.
Its performance increases when we increase the number of block coordinates, but not significantly when $n$ goes from $10$ to $100$ in this experiment.

Next, we implement the randomized coordinate forward algorithm (RCFA) in \cite{combettes2015stochastic} (which is also equivalent to the synchronous variant of ARock in \cite{peng2016arock}), the Stochastic Block Operator Extrapolation (SBOE) in \cite[Algorithm 3]{kotsalis2022simple},  and our \ref{eq:ARCOG_scheme} to solve a convex-concave instance of \eqref{eq:minimax_exam1}.
The data is generated by the same way as in the first test, but we choose $\underline{d} = 0$ to guarantee the monotonicity of $G$.
Figure \ref{fig:exam2} shows the results of three algorithms on two experiments using $10$ problem instances.
Each algorithm has two variants corresponding to $n=50$ and $n=100$.

\begin{figure}[ht!]
\vspace{-3ex}
\centering
\includegraphics[width=1\textwidth]{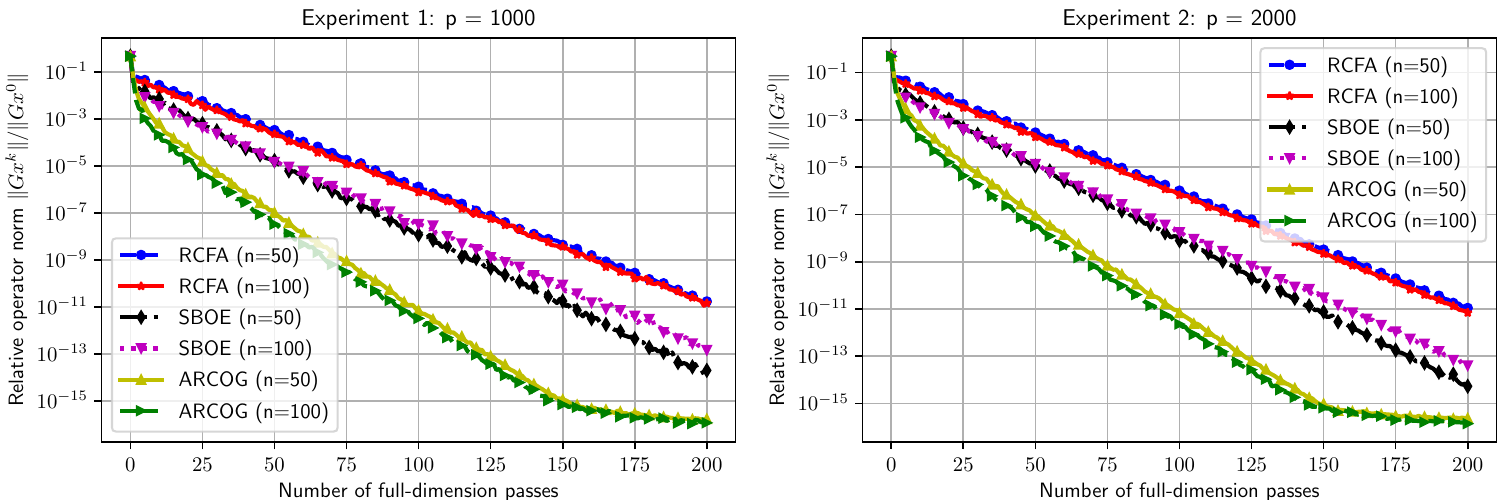}
\vspace{-2ex}
\caption{
Comparison of three algorithms: RCFA  \cite{combettes2015stochastic,peng2016arock}, SBOE \cite{kotsalis2022simple}, and \ref{eq:ARCOG_scheme}  to solve a convex-concave case of \eqref{eq:minimax_exam1} on $2$ experiments.
The plot is the mean of $10$ problem instances.
}
\label{fig:exam2}
\vspace{-2ex}
\end{figure}

Figure~\ref{fig:exam2} shows that our \ref{eq:ARCOG_scheme} highly outperforms its competitors in both experiments.
The solution accuracy of our method is already in the range of $10^{-15}$ after $200$ passes, which is sufficiently high.
We also see that SBOE has a better performance than RCFA in these two experiments. 
However, as a compensation, the cost per-pass of \ref{eq:ARCOG_scheme} and SBOE is twice that of RCFA.

\beforesubsec
\subsection{Constrained minimax problems with synthetic data.}\label{subsec:constrained_minimax_problem}
\aftersubsec
In this test, we consider the minimax problem \eqref{eq:minimax_prob} with constraints $u \in \Delta_{p_1}$ and $v \in\Delta_{p_2}$, where $\Delta_{p_1}$ and $\Delta_{p_2}$ are the standard simplexes in $\R^{p_1}$ and $\R^{p_2}$, respectively.
These constraints correspond to  $f(u) := \delta_{\Delta_{p_1}}(u)$ and $g(v) := \delta_{\Delta_{p_2}}(v)$ in \eqref{eq:minimax_exam1}, where $\delta_{\Xc}$ is the indicator of $\Xc$.
Therefore, if we define $x := [u, v] \in \R^p$ with $p := p_1+p_2$, $G$ as in Subsection~\ref{subsec:num_exp_RCOG}, and $T := [ \partial\delta_{\Delta_{p_1}}, \partial{\delta}_{\Delta_{p_2}}]$, then  the optimality condition of \eqref{eq:minimax_exam1} becomes $0 \in Gx + Tx$, which is in the form \eqref{eq:FedNI}.
Now, we reformulate $0 \in Gx + Tx$ into \eqref{eq:NE} as  $S_{\lambda}x^{\star} = 0$  using Tseng's FBFS operator $S_{\lambda}$ defined by\eqref{eq:S_operator2} and $\lambda := 0.5$.

We first apply our \ref{eq:naRCOG_scheme} to solve $S_{\lambda}x = 0$ and compare it with \ref{eq:OG_scheme} as in Subsection~\ref{subsec:num_exp_RCOG}.
We generate $Gx = \mbf{G}x + \mbf{g}$, where $\mbf{G} := [P, H; -H; Q]$ and $\mbf{g} := [b, c]$.
Matrix $P$ is constructed as $P := UD_1U^{\top}$, where $U$ is an orthonormal matrix computed from a QR factorization of a random matrix, and $D_1 = \mathrm{diag}(d_{11}, \cdots, d_{1p_1})$ is a diagonal matrix generated randomly and clipped such that $d_{ij} \geq \underline{d}$. 
Matrix $Q$ is generated similarly as $P$.
Matrix $H$ and vectors $b$ and $c$ are generated randomly as in Subsection~\ref{subsec:num_exp_RCOG}, but we do not take average of samples.
Then, we normalize $\mbf{G}$ as $\mbf{G}/\sqrt{p}$.
We first choose $\underline{d} := -0.5$ so that $G$ is possibly nonmonotone.

We carry out two experiments as in Subsection~\ref{subsec:num_exp_RCOG} but with large dimensions: \textit{Experiment 1} with $p = 5000$ and \textit{Experiment 2} with $p = 10000$.
We run all algorithms on these two experiments for $k_{\max} := 100$ full dimension passes using $10$ problem instances.
Then, we report the mean of $\Vert S_{\lambda}x^k\Vert/\Vert S_{\lambda}x^0\Vert$ against the number of full dimension passes.
The results are shown in Figure~\ref{fig:exam3}.

\begin{figure}[ht!]
\vspace{-2ex}
\centering
\includegraphics[width=1\textwidth]{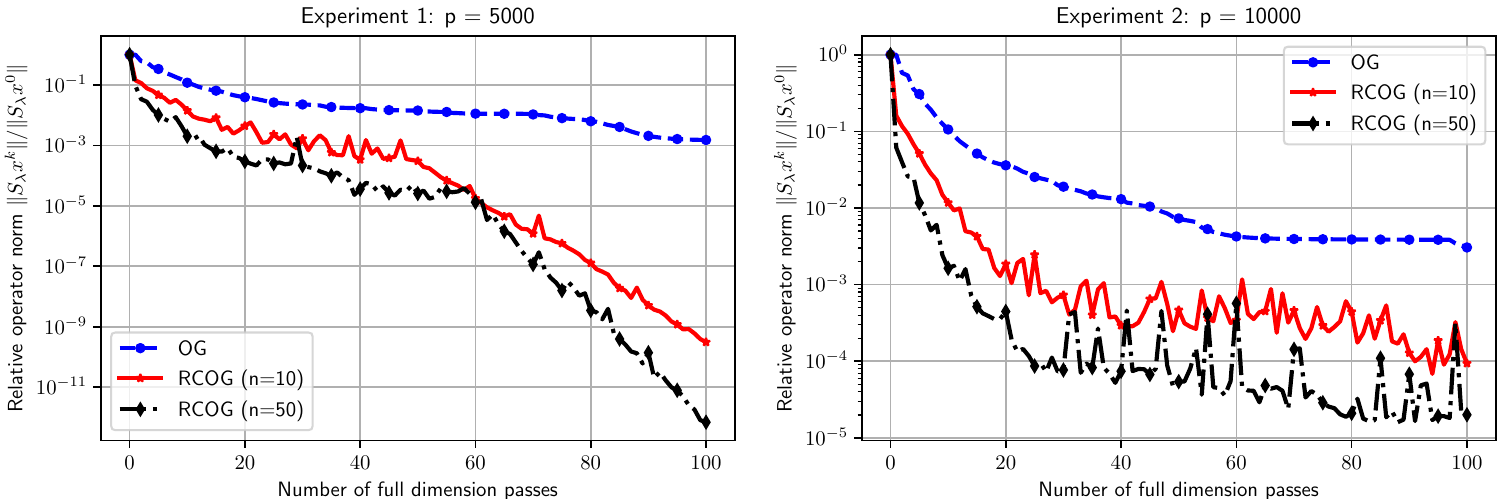}
\vspace{-1ex}
\caption{
Comparison of \ref{eq:OG_scheme} and two variants of \ref{eq:naRCOG_scheme} to solve a constrained nonconvex-nonconcave instance of \eqref{eq:minimax_exam1} on $2$ experiments with $p=5000$ and $p=10000$, respectively.  
The plot is on the average of $10$ problem instances.
}
\label{fig:exam3}
\vspace{-2ex}
\end{figure}

We can see that in both experiments, \ref{eq:naRCOG_scheme} performs much better than \ref{eq:OG_scheme}.
Moreover, \ref{eq:naRCOG_scheme} can also achieve high accuracy in the first experiment, while \ref{eq:OG_scheme} is saturated for a long period.

Next, we compare our \ref{eq:ARCOG_scheme} and SBOE in \cite{kotsalis2022simple} on the constrained convex-concave setting of \eqref{eq:minimax_exam1}.
We generate our data as in the previous test, but using $\underline{d} = 0$ to ensure that $G$ is monotone.
The results of this test is plotted in Figure~\ref{fig:exam4} for the case of $n=50$ block coordinates.

\begin{figure}[ht!]
\vspace{-0ex}
\centering
\includegraphics[width=1\textwidth]{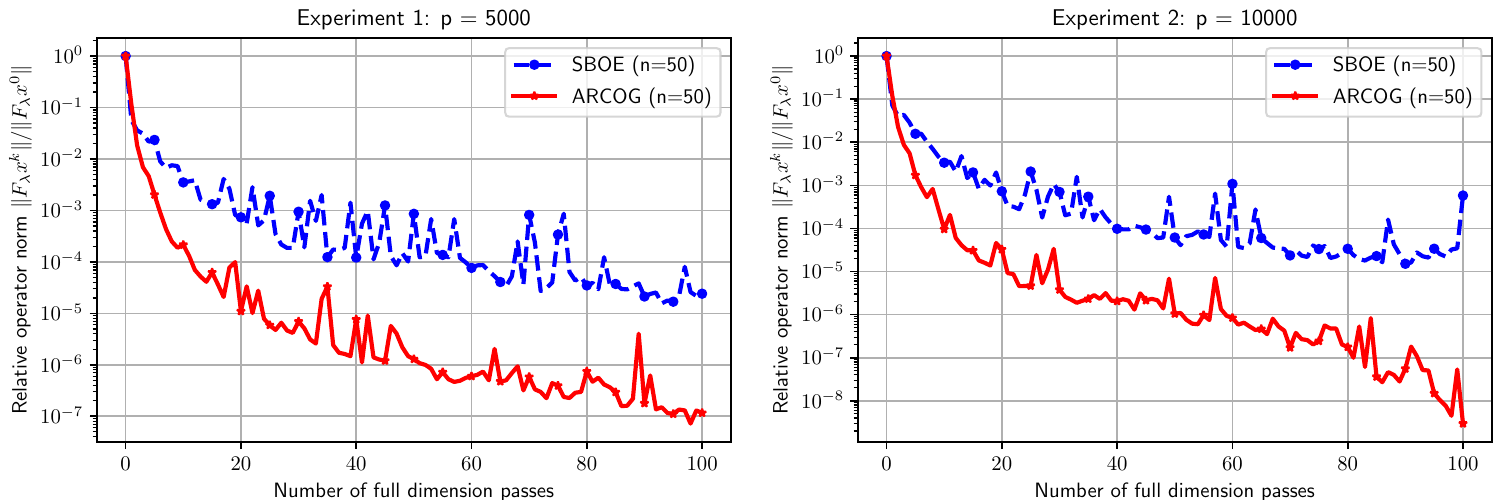}
\vspace{-2ex}
\caption{
Comparison of two algorithms: SBOE in \cite{kotsalis2022simple} and our \ref{eq:ARCOG_scheme}  to solve a constrained convex-concave instance of \eqref{eq:minimax_exam1} on $2$ experiments.
The plot is on the average of $10$ problem instances.
}
\label{fig:exam4}
\vspace{-2ex}
\end{figure}

Figure~\ref{fig:exam4} shows that while both methods essentially have the same per-iteration cost, \ref{eq:ARCOG_scheme} highly outperforms  SBOE on this experiment.
Clearly, this observation is not surprised since \ref{eq:ARCOG_scheme} is an accelerated method with a faster convergence rate, while SBOE is not.

\beforesubsec
\subsection{Regularized logistic regression with ambiguous features.}\label{subsec:logistic_reg_with_ambiguous_feature}
\aftersubsec
We consider a standard regularized logistic regression model associated with a given dataset $\sets{(\hat{X}_i, y_i)}_{i=1}^N$, where $\hat{X}_i$ is an i.i.d. sample of a feature vector and $y_i \in \{0, 1\}$ is the corresponding label of $\hat{X}_i$.
We assume that $\hat{X}_i$ is ambiguous, i.e. it belongs to one of $m$ possible examples  $\{X_{ij} \}_{j=1}^m$.
Since we do not know $\hat{X}_i$ to evaluate the loss, we consider the worst-case loss $f_i(w) := \max_{1\leq j \leq m}\ell( \iprods{X_{ij}, w}, y_i)$ computed from $m$ examples, where $\ell(\tau, s) := \log(1 + \exp(\tau)) - s\tau$ is the standard logistic loss.

Using the fact that $\max_{1\leq j\leq m}\ell_j(\cdot) = \max_{v\in\Delta_m}\sum_{j=1}^m v_j \ell_j(\cdot)$, where $\Delta_m$ is  the standard simplex in $\R^m$, we can model this regularized logistic regression into the following minimax problem:
\myeq{eq:logistic_reg_exam}{
\min_{w \in\R^d} \max_{v\in \R^m } \Big\{ \mathcal{L}(w, v) := \frac{1}{N} \sum_{i=1}^{N} \sum_{j=1}^m v_j  \ell ( \iprods{X_{ij}, w}, y_i ) + \tau R(w) - \delta_{\Delta_m}(v) \Big\},
}
where $R(w) := \norms{w}_1$ is an $\ell_1$-norm regularizer, $\tau > 0$ is a regularization parameter, and $\delta_{\Delta_m}$ is the indicator of $\Delta_m$ that handles the constraint $v \in \Delta_m$.

First, we denote $x := [w; v]$, and
\myeqn{
\begin{array}{lcl}
G_i := \big[ \sum_{j=1}^mv_j \ell'(\iprods{X_{ij},w}, y_i) X_{ij};\ -\ell(\iprods{X_{i1}, w}, y_i); \cdots; -\ell(\iprods{X_{im}, w}, y_i) \big], \ \textrm{and} \  T := [\tau \partial{R}(\cdot); \partial{\delta_{\Delta_m}}(\cdot)],
\end{array}
}
 where $\ell'(\tau, s) = \frac{\exp(\tau)}{1+\exp(\tau)} - s$.
Then, the optimality condition of \eqref{eq:logistic_reg_exam} can be written as \eqref{eq:FedNI}.
Next,  we reformulate this condition into \eqref{eq:NE} as $S_{\lambda}x^{\star} = 0$, where $S_{\lambda}$ is  Tseng's FBFS operator defined by \eqref{eq:S_operator2} and $\lambda := 0.5$.
Finally,  we implement \ref{eq:OG_scheme}, \ref{eq:naRCOG_scheme}, and \ref{eq:ARCOG_scheme} to solve $S_{\lambda}x^{\star} = 0$.
We use a fine tuning procedure to select appropriate stepsizes for all methods.

We test these algorithms on two real datasets: \texttt{w8a} (300 features and 49749 samples) and \texttt{leukemia} (7129 features and 38 samples) downloaded from \texttt{LIBSVM} \cite{CC01a}.
We first normalize the feature vector $\hat{X}_i$ such that each sample has unit norm, and add a column of  all ones to address the bias term.
To generate ambiguous features, we take the nominal feature vector $\hat{X}_i$ and add a random noise generated from a normal distribution of zero mean and variance $\sigma = 0.25$.
In our test, we choose $\tau := 5\times 10^{-4}$ and $m := 10$.
We run two variants of each algorithm: \ref{eq:naRCOG_scheme} or \ref{eq:ARCOG_scheme}  with $n=5$ and $n=10$, respectively. 
The relative norm $\norms{S_{\lambda}x^k}/\norms{S_{\lambda}x^0}$ against the number of full dimension passes is  plotted in Figure~\ref{fig:exam5} for both datasets.

\begin{figure}[ht!]
\vspace{-0ex}
\centering
\includegraphics[width=1\textwidth]{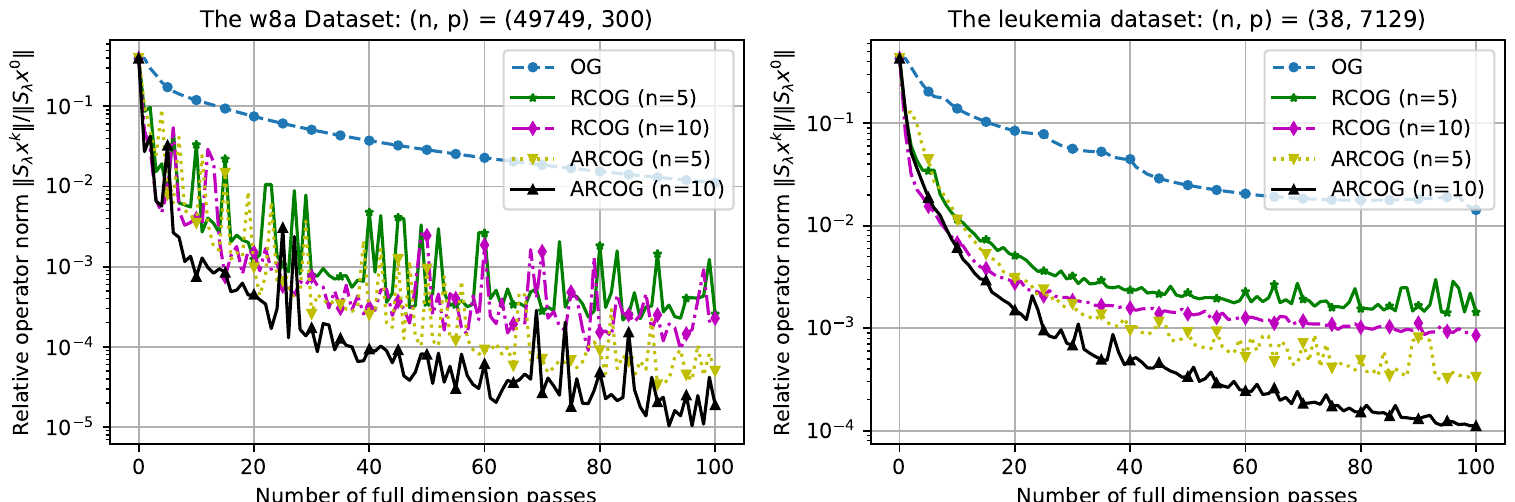}
\vspace{-2ex}
\caption{
The performance of 3 algorithms: \ref{eq:OG_scheme}, \ref{eq:naRCOG_scheme}, and \ref{eq:ARCOG_scheme}  for solving \eqref{eq:logistic_reg_exam} on \texttt{w8a}  and \texttt{leukemia}.
}
\label{fig:exam5}
\vspace{-2ex}
\end{figure}

As we can observe from Figure~\ref{fig:exam5} that both \ref{eq:naRCOG_scheme} and \ref{eq:ARCOG_scheme} perform much better than \ref{eq:OG_scheme}, but  \ref{eq:ARCOG_scheme} performs best.
If the number of block coordinates is $n=10$, our methods seem to work  better than the case $n=5$.
For \texttt{w8a}, both \ref{eq:naRCOG_scheme} and \ref{eq:ARCOG_scheme} are oscillated, while for \texttt{leukemia}, they have a slightly smoother progress.
Note that three methods have similar computational costs at each full dimension pass, but we do not implement any enhancement such as restart for  \ref{eq:ARCOG_scheme}.

\beforesubsec
\subsection{Experiments of Algorithms \ref{alg:A1} and \ref{alg:A2}}\label{subsec:logistic_reg_for_A1_and_A2}
\aftersubsec
Finally, we validate our Algorithm \ref{alg:A1} and Algorithm \ref{alg:A2} and compare them with a baseline optimization algorithm in federated learning, called \textbf{Fed}erated \textbf{Av}era\textbf{g}ing (\texttt{FedAvg}) method in \cite{mcmahan2017communication}.
Since \texttt{FedAvg} was developed to solve optimization, we test three algorithms on a federated logistic regression problem as in \cite{mcmahan2017communication}.
Specifically, given a dataset $\{ (X_i, y_i) \}_{i=1}^N$, we partition it into $n$ [nearly] equal sub-datasets $\{ (X_{ij}, y_{ij}) \}_{j=1}^{N_i}$, $i = 1, \cdots, n$, and $\sum_{i=1}^n N_i = N$, where each sub-dataset corresponds to an user in a FL system. 
The underlying optimization problem of this FL system becomes
\begin{equation}\label{eq:fed_logistic_reg_min}
{\displaystyle\min_{w \in \R^p}} \Big\{ f(w; (\mathbf{X}, \mathbf{y})) = \frac{1}{n} \sum_{i=1}^n f_i \big( w; \{ (X_{ij}, y_{ij}) \}_{j=1}^{N_i} \big) \Big\},
\end{equation}
where $f_i \big( w; \sets{ (X_{ij}, y_{ij}) }_{j=1}^{N_i} \big) = \frac{n}{N} \sum_{j=1}^{N_i} \ell(w; (X_{ij}, y_{ij}))$ and  $\ell(w; (X_j, y_j)) := \log(1 + \exp(\iprods{ X_j, w} )) - y_j \iprods{ X_j, w}$ is the standard logistic loss. 
The optimality condition of \eqref{eq:fed_logistic_reg_min} is a special case of  \eqref{eq:fed_logistic_reg_FNI} and can be explicitly written as
\begin{equation}\label{eq:fed_logistic_reg_FNI}
\begin{array}{lcl}
Gw^\star \equiv \frac{1}{n} \sum_{i=1}^n G_i w^\star = 0,
\end{array} 
\end{equation}
where $G_i w := \nabla_w f_i\big( w,  \sets{ (X_{ij}, y_{ij}) }_{j=1}^{N_i} \big) = \frac{n}{N} \sum_{j=1}^{N_i} \nabla_w \ell(w; (X_{ij}, y_{ij}))$.

In this test, we implement  our Algorithm \ref{alg:A1} (\texttt{FedOG}) and Algorithm~\ref{alg:A2} (\texttt{AcFedDR}), and compare them with \texttt{FedAvg}.
Since it requires significant effort to estimate the model constants, we instead manually tune three algorithms to find the best possible parameters that work well for each algorithm.
In particular, for \texttt{FedAvg}, we choose the fraction of active users in each communication round to be $C = 0.2$ and the number of local training epochs to be  $E = 5$ as suggested in \cite{mcmahan2017communication}.

Similar to Subsection \ref{subsec:logistic_reg_with_ambiguous_feature}, we test these algorithms on two real datasets downloaded from \texttt{LIBSVM}  \cite{CC01a}: \texttt{w7a} (300 features and 24692 samples) and \texttt{a9a} (123 features and 32561 samples). 
For  preprocessing, we normalize the feature vector $X_i$ to obtain unit norm samples and add a column of all ones to handle the bias term. 
In the context of federated learning, we assume that our model has 30 users for the \texttt{w7a} dataset and 20 users for the \texttt{a9a} dataset.
Since we only test the performance of these algorithms in the context of optimization, we only report the relative operator norm $\frac{\norms{Gw^k}}{\norms{Gw^0}}$ against the number of communication rounds, and do not report the training and test errors as in machine learning.
The results of our test are plotted in Figure \ref{fig:exam6} for both datasets.

\begin{figure}[ht!]
\vspace{-2.0ex}
\centering
\includegraphics[width=1\textwidth]{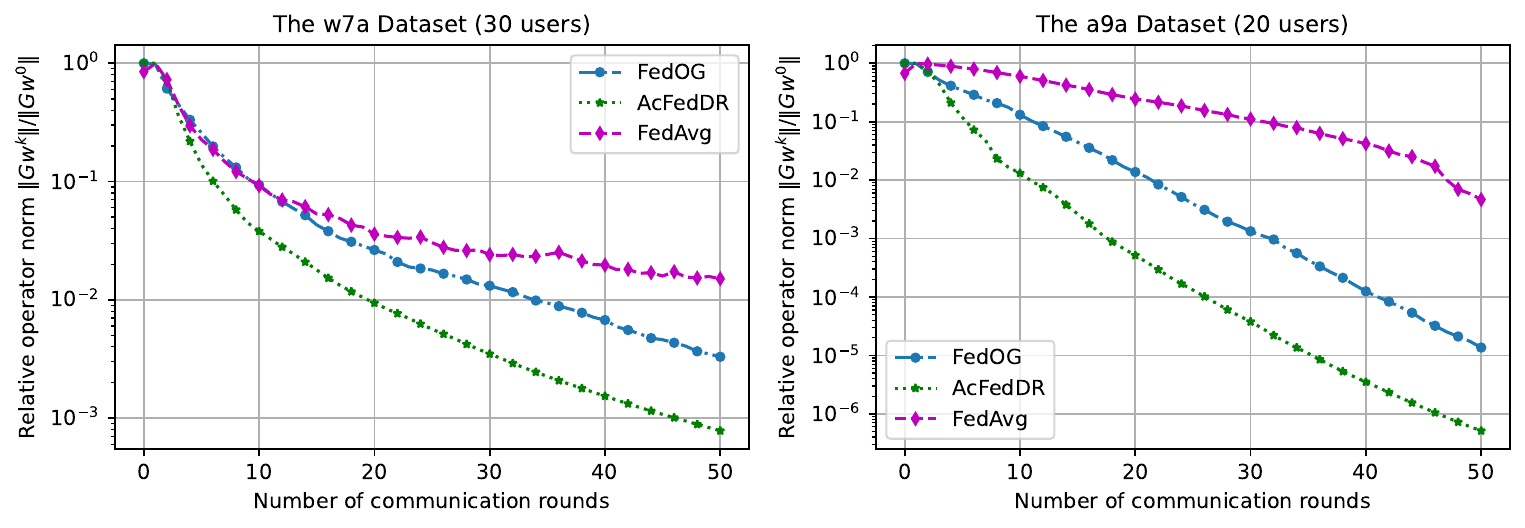}
\vspace{-2.5ex}
\caption{The performance of 3 algorithms: Algorithm \ref{alg:A1}, Algorithm \ref{alg:A2}, and \texttt{FedAvg} for solving \eqref{eq:fed_logistic_reg_FNI} with the two datasets: \texttt{w7a}  and \texttt{a9a} after $50$ communication rounds. }
\label{fig:exam6}
\vspace{-2ex}
\end{figure}

From Figure \ref{fig:exam6}, we observe that on the \texttt{w7a} dataset, both \texttt{FedOG} (Algorithm \ref{alg:A1}) and \texttt{AcFedDR} (Algorithm \ref{alg:A2}) significantly outperform \texttt{FedAvg}, and  \texttt{AcFedDR} achieves the best performance, reducing the overall relative operator norm to below $10^{-3}$ after $50$ communication rounds. 
\texttt{FedAvg} exhibits a reasonable performance but it is significantly slower than  \texttt{FedOG} and \texttt{AcFedOG}, probably due to the heterogeneity of the data.
Note that our methods are designed to avoid such an issue.

On the \texttt{a9a} dataset, the behavior of these algorithms remains consistent with their performance on the \texttt{w7a} dataset. 
However, both \texttt{FedOG} and \texttt{AcFedDR} achieve significantly better accuracy, where the relative operator norm reduces to approximately $10^{-5}$ and $10^{-6}$, respectively.

\section*{Acknowledgements.}
This paper is based upon work partially supported by the National Science Foundation (NSF), grant no. NSF-RTG DMS-2134107, and the Office of Naval Research (ONR), grant No. N00014-20-1-2088 and N00014-23-1-2588.
The authors would like to thank Mr. Nghia Nguyen-Trung for his implementation and experiments on the last example in this paper.

\begin{APPENDICES}
\beforesec
\section{Technical Lemmas and Proofs.}\label{apdx:tech_results}
\aftersec
In this appendix, we provide some supporting lemmas and the full proof of our technical results in the main text.

\beforesubsec
\subsection{Technical lemmas.}\label{apdx:subsec:tech_results}
\aftersubsec
The following results will be used in Section~\ref{sec:FedNI_algorithms}.

\begin{lemma}\label{le:resolvent_oper}
Let $\Lc := \sets{\mbf{x} = [x_1, \cdots, x_n] \in \R^{np} : x_i = x_1,  \forall i = 2, \cdots, n}$ and $\mbf{T}_1\mbf{x} := [n Tx_1, 0, \cdots, 0]$ for a given maximally monotone operator $T$.
Then, for any $\beta > 0$ and $\mbf{u} := [u_1, \cdots, u_n]$, we have 
\myeqn{
J_{\beta (\mbf{T}_1 + \partial{\delta_{\Lc}})}\mbf{u} = [\hat{u}, \cdots, \hat{u}], \quad \text{where} \quad \hat{u} := J_{\beta T}\Big(\tfrac{1}{n}\sum_{i=1}^nu_i \Big).
} 
\end{lemma}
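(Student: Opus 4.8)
The plan is to compute the resolvent directly from its defining inclusion. Recall that $\mbf{x} = J_{\beta(\mbf{B}+\partial\delta_{\Lc})}\mbf{u}$ means precisely $\mbf{u} \in \mbf{x} + \beta\mbf{B}\mbf{x} + \beta\partial\delta_{\Lc}(\mbf{x})$. Since $\mbf{B}$ is maximally monotone (because $B$ is) and $\partial\delta_{\Lc}$ is maximally monotone, such an $\mbf{x}$ — if it exists — is unique: if $\mbf{x}$ and $\mbf{x}'$ both satisfy the inclusion, then monotonicity of $\mbf{B}+\partial\delta_{\Lc}$ gives $\iprods{(\mbf{u}-\mbf{x})-(\mbf{u}-\mbf{x}'),\,\mbf{x}-\mbf{x}'} \geq 0$, i.e. $\norms{\mbf{x}-\mbf{x}'}^2 \leq 0$. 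Hence it suffices to exhibit one solution and verify that it has the claimed form (this also shows the resolvent is defined everywhere).

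First I would exploit the structure of $\partial\delta_{\Lc}$. Since $\Lc$ is a linear subspace, $\partial\delta_{\Lc}(\mbf{x})$ equals the normal cone $N_{\Lc}(\mbf{x}) = \Lc^{\perp}$ when $\mbf{x}\in\Lc$ and is empty otherwise, and a direct computation gives $\Lc^{\perp} = \{\mbf{w} = [w_1,\dots,w_n]\in\R^{np} : \sum_{i=1}^n w_i = 0\}$ (because $\iprods{\mbf{w},[x,\dots,x]} = \iprods{\sum_i w_i, x}$ vanishes for all $x$ iff $\sum_i w_i = 0$). Therefore any solution must lie in $\Lc$, i.e. $\mbf{x} = [\hat u,\dots,\hat u]$ for some $\hat u \in \R^p$, and there must exist $\mbf{w}\in\Lc^{\perp}$ and $\mbf{b}\in\mbf{B}\mbf{x}$ with $\mbf{u} = \mbf{x} + \beta\mbf{b} + \beta\mbf{w}$. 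Using $\mbf{B}\mbf{x} = [nB\hat u, 0,\dots,0]$, the inclusion splits componentwise into $u_1 \in \hat u + \beta n B\hat u + \beta w_1$ and $u_i = \hat u + \beta w_i$ for $i = 2,\dots,n$.

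Then I would solve this system. The last $n-1$ equations give $w_i = \tfrac{1}{\beta}(u_i - \hat u)$ for $i\geq 2$, and the constraint $\sum_{i=1}^n w_i = 0$ forces $w_1 = -\tfrac{1}{\beta}\sum_{i=2}^n(u_i - \hat u)$. Substituting $\beta w_1 = -\sum_{i=2}^n u_i + (n-1)\hat u$ into the first inclusion and collecting the copies of $\hat u$ yields $\sum_{i=1}^n u_i \in n\hat u + \beta n B\hat u$, that is, $\tfrac{1}{n}\sum_{i=1}^n u_i \in \hat u + \beta B\hat u$, which (since $J_{\beta B} = (\Id+\beta B)^{-1}$ is single-valued) is equivalent to $\hat u = J_{\beta B}\big(\tfrac{1}{n}\sum_{i=1}^n u_i\big)$. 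Conversely, taking this $\hat u$ and defining $\mbf{w}$ by the formulas above produces $\mbf{w}\in\Lc^{\perp}$ and a valid decomposition $\mbf{u} = [\hat u,\dots,\hat u] + \beta\mbf{b} + \beta\mbf{w}$, so $[\hat u,\dots,\hat u]$ solves the inclusion; by the uniqueness noted above it is the resolvent.

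I do not anticipate a real obstacle here — the argument is a short manipulation of monotone-operator inclusions. The only points that require care are (i) correctly identifying $\Lc^{\perp}$ as the ``sum-to-zero'' subspace and invoking $\partial\delta_{\Lc} = N_{\Lc} = \Lc^{\perp}$ on $\Lc$, and (ii) the bookkeeping when eliminating $w_1$ through the orthogonality constraint and absorbing the $(n-1)$ extra copies of $\hat u$ into the final identity.
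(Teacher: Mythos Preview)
Your proposal is correct and follows essentially the same route as the paper's proof: both write out the resolvent inclusion componentwise, use $\Lc^{\perp} = \{\mbf{w}:\sum_i w_i = 0\}$ to eliminate the normal-cone variables, and reduce to $\frac{1}{n}\sum_i u_i \in \hat u + \beta B\hat u$. Your version is slightly more complete in that you explicitly argue uniqueness and verify the converse direction, whereas the paper simply solves the system forward.
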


\proof{\textbf{Proof.}}
Computing the resolvent $J_{\beta (\mbf{T}_1 + \partial{\delta_{\Lc}})}\mbf{u}$ of $\beta(\mbf{T}_1  + \partial{\delta_{\Lc}})$ at $\mbf{u} := [u_1, \cdots, u_n]$ is equivalent to solving the following system in $\tilde{u}_i$ and $\tilde{s}_i$ ($i \in [n]$):
\myeq{eq:resolvent_of_B}{
\arraycolsep=0.2em
\left\{\begin{array}{lcl}
0 \in n \beta  T\tilde{u}_1 + \beta \tilde{s}_1 + \tilde{u}_1 - u_1, \vspace{1ex}\\
0 = \beta \tilde{s}_i + \tilde{u}_i - u_i, \quad i = 2, \cdots, n,
\end{array}\right.
}
where $\tilde{\mbf{s}} := [\tilde{s}_1, \cdots, \tilde{s}_n] \in \partial{\delta}_{\Lc}(\tilde{\mbf{u}}) \equiv \Lc^{\perp} := \sets{ \tilde{\mbf{s}} := [\tilde{s}_1, \cdots, \tilde{s}_n] : \sum_{i=1}^n\tilde{s}_i = 0}$.
The last line of \eqref{eq:resolvent_of_B} leads to $\tilde{u}_i = u_i - \beta \tilde{s}_i$ for $i=2,\cdots, n$.
Therefore, since $\tilde{\mbf{u}} \in \Lc$ and $\sum_{i=1}^n \tilde{s}_i = 0$, we have  $\tilde{u}_i = \tilde{u}_1$ for $i=2,\cdots, n$ and
\myeqn{
(n-1)\tilde{u}_1 = \sum_{i=2}^nu_i - \beta\sum_{i=2}^n\tilde{s}_i = \sum_{i=2}^nu_i - \beta\sum_{i=1}^n\tilde{s}_i + \beta \tilde{s}_1 = \sum_{i=2}^nu_i + \beta \tilde{s}_1.
}
This equation implies that $\tilde{u}_1 + \beta \tilde{s}_1 = n\tilde{u}_1 - \sum_{i=2}^nu_i$.
Substituting this expression into the first line of \eqref{eq:resolvent_of_B}, we get $0 \in n \beta T\tilde{u}_1 + n\tilde{u}_1 - \sum_{i=1}^nu_i$, or equivalently, $0 \in \beta T\tilde{u}_1 + \tilde{u}_1 - \frac{1}{n}\sum_{i=1}^nu_i$.
Solving this inclusion, we obtain  $\tilde{u}_1 = J_{\beta T}\left(\frac{1}{n}\sum_{i=1}^nu_i\right)$.
Let us define $\hat{u} := \tilde{u}_1 = J_{\beta T}\left(\frac{1}{n}\sum_{i=1}^nu_i\right)$.
Then, since $\tilde{u}_i = \tilde{u}_1 = \hat{u}$ for $i=2,\cdots, n$, we have $J_{\beta (\mbf{T}_1 + \partial{\delta_{\Lc}})}\mbf{u} = [\hat{u}, \cdots, \hat{u}]$ as desired.
\Eproof
\endproof

The following lemma is modified  from \cite[Proposition 1]{ioan2023relaxed} to cover nonmonotone operators (i.e. \eqref{eq:FedNI_reform} possesses a weak Minty  solution).

\begin{lemma}\label{le:property_of_S_oper}
Let $T : \R^p \rightrightarrows 2^{\R^p}$ be maximally monotone and $G : \R^p\to\R^p$ be $L$-Lipschitz continuous, and $\iprods{u, x - x^{\star}} \geq -\rho\norms{u}^2$ for any $(x, u) \in \gra{G+T}$ and $x^{\star} \in \zer{G+T}$, where $L > 0$ and $\rho \geq 0$ are given. 
For some $\lambda > 0$, we consider the following Tseng's FBFS operator 
\myeq{eq:S_operator2_new}{
S_{\lambda}x := x - J_{\lambda T}(x - \lambda Gx) - \lambda\left( Gx - G \circ J_{\lambda T}(x - \lambda Gx) \right).
}
Then, $S_{\lambda}$ is $L_s$-Lipschitz continuous with $L_s := (1 + \lambda L)(2 + \lambda L)$.
Moreover, $x^{\star} \in \zer{G+T}$ iff $S_{\lambda}x^{\star} = 0$.
For $\hat{\rho} := \frac{1 - \lambda L}{(1 + \lambda L)^2} - \frac{\rho}{\lambda}$, the following property holds:
\myeq{eq:S_property}{
\iprods{ S_{\lambda}x, x - x^{\star}} \geq \hat{\rho} \norms{S_{\lambda}x}^2, \quad \forall x \in \dom{G+T}.
}
If $8L\rho \leq 1$, then for any $\lambda$ such that $0 <  \frac{1 - 2L\rho - \sqrt{1 - 8L\rho}}{2L(1 + L\rho)} \leq \lambda \leq \frac{1 - 2L\rho + \sqrt{1 - 8L\rho}}{2L(1 + L\rho)}$, we have $\hat{\rho} \geq 0$.
\end{lemma}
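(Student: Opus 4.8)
The plan is to carry the auxiliary point $y := J_{\lambda R}(x - \lambda Ax)$ through the whole argument, so that $S^{\lambda}x = (x - y) - \lambda(Ax - Ay)$, and to begin by extracting the inclusion that drives everything: by the definition of the resolvent, $y = J_{\lambda R}(x - \lambda Ax)$ is equivalent to $\tfrac{1}{\lambda}(x - y) - Ax \in Ry$, and adding $Ay$ to both sides yields $\tfrac{1}{\lambda}S^{\lambda}x \in (A + R)y$. Since $R$ is maximally monotone, $\dom{J_{\lambda R}} = \R^p$, so $S^{\lambda}$ is well-defined on all of $\R^p$.

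For Lipschitz continuity I would use that $J_{\lambda R}$ is (firmly) nonexpansive and $A$ is $L$-Lipschitz to get $\norms{y - y'} \le \norms{(x - \lambda Ax) - (x' - \lambda Ax')} \le (1 + \lambda L)\norms{x - x'}$ for the resolvent points $y, y'$ attached to $x, x'$; then writing $S^{\lambda}x - S^{\lambda}x' = \big[(x - x') - \lambda(Ax - Ax')\big] - \big[(y - y') - \lambda(Ay - Ay')\big]$ and bounding the first bracket by $(1+\lambda L)\norms{x-x'}$ and the second by $(1+\lambda L)\norms{y-y'} \le (1+\lambda L)^2\norms{x-x'}$ gives $L_s = (1+\lambda L)(2+\lambda L)$. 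For the zero characterization: if $x^{\star} \in \zer{A+R}$ then $-Ax^{\star} \in Rx^{\star}$, hence $J_{\lambda R}(x^{\star} - \lambda Ax^{\star}) = x^{\star}$ and $S^{\lambda}x^{\star} = 0$; conversely $S^{\lambda}x^{\star} = 0$ forces $\tfrac{1}{\lambda}S^{\lambda}x^{\star} = 0 \in (A+R)y$, so $y \in \zer{A+R}$, and $x^{\star} - y = \lambda(Ax^{\star} - Ay)$ together with $\lambda L < 1$ (valid on the range of $\lambda$ used below) gives $x^{\star} = y$.

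The core step is \eqref{eq:S_property}. I would split $\iprods{S^{\lambda}x, x - x^{\star}} = \iprods{S^{\lambda}x, y - x^{\star}} + \iprods{S^{\lambda}x, x - y}$. For the first term, apply the weak Minty hypothesis to the pair $\big(y,\, \tfrac{1}{\lambda}S^{\lambda}x\big) \in \gra{A + R}$ and multiply through by $\lambda > 0$, obtaining $\iprods{S^{\lambda}x, y - x^{\star}} \ge -\tfrac{\rho}{\lambda}\norms{S^{\lambda}x}^2$. For the second term, $\iprods{S^{\lambda}x, x - y} = \norms{x - y}^2 - \lambda\iprods{Ax - Ay, x - y} \ge (1 - \lambda L)\norms{x - y}^2$ by Cauchy--Schwarz, while $\norms{S^{\lambda}x} \le (1 + \lambda L)\norms{x - y}$, so $\iprods{S^{\lambda}x, x - y} \ge \tfrac{1 - \lambda L}{(1 + \lambda L)^2}\norms{S^{\lambda}x}^2$. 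Summing the two bounds produces exactly $\hat{\rho} = \tfrac{1-\lambda L}{(1+\lambda L)^2} - \tfrac{\rho}{\lambda}$. Finally, $\hat{\rho} \ge 0$ is equivalent, after multiplying by $\lambda(1+\lambda L)^2 > 0$, to $L(1 + \rho L)\lambda^2 - (1 - 2\rho L)\lambda + \rho \le 0$; this convex quadratic in $\lambda$ has discriminant $(1 - 2\rho L)^2 - 4\rho L(1 + \rho L) = 1 - 8L\rho$, so when $8L\rho \le 1$ it is nonpositive precisely for $\lambda$ between its two roots $\underline{\lambda}$ and $\bar{\lambda}$, and positivity of $\underline{\lambda}$ follows by squaring $1 - 2\rho L > \sqrt{1 - 8L\rho}$.

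I expect the main obstacle to be bookkeeping rather than depth: one must correctly identify that it is $\tfrac{1}{\lambda}S^{\lambda}x$ (not $S^{\lambda}x$) that belongs to $(A+R)$ evaluated at $y$ (not at $x$), so that the weak Minty inequality is invoked at the right point and with the right operator value, and keep the powers of $\lambda$ consistent when passing between $S^{\lambda}x$, $x - y$, and $Ax - Ay$. Everything else — the Lipschitz constant, the zero characterization, and the quadratic analysis of $\hat{\rho}$ — is routine once the inclusion $\tfrac{1}{\lambda}S^{\lambda}x \in (A+R)y$ is in hand.
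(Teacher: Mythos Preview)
Your proposal is correct and follows essentially the same route as the paper: introduce $y = J_{\lambda R}(x - \lambda Ax)$, identify $\tfrac{1}{\lambda}S^{\lambda}x \in (A+R)y$, split $\iprods{S^{\lambda}x, x - x^{\star}}$ through $y$, apply the weak-Minty hypothesis at $(y,\tfrac{1}{\lambda}S^{\lambda}x)$ for one piece and $L$-Lipschitz continuity together with $\norms{S^{\lambda}x}\le(1+\lambda L)\norms{x-y}$ for the other, then reduce $\hat{\rho}\ge 0$ to the quadratic $L(1+L\rho)\lambda^2-(1-2L\rho)\lambda+\rho\le 0$. The only cosmetic difference is that the paper cites \cite{ioan2023relaxed} for the Lipschitz constant $L_s$ and the zero-set characterization rather than writing them out; your direct arguments are fine (your converse direction for $\zer{S^{\lambda}}=\zer{A+R}$ uses $\lambda L<1$, which is exactly the regime the lemma is applied in).
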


\proof{\textbf{Proof.}}
The statement $x^{\star} \in \zer{G+T}$ iff $S_{\lambda}x^{\star} = 0$ and the $L_s$-Lipschitz continuity of $S_{\lambda}$ were proven in  \cite[Proposition 1]{ioan2023relaxed}.
For any $x\in\dom{G+T}$, if we denote $z := J_{\lambda T}(x - \lambda Gx)$, then by \eqref{eq:S_operator2_new}, we have $S_{\lambda}x = x - z - \lambda(Gx - Gz)$.
Moreover, $u := \frac{1}{\lambda}S_{\lambda}x = \frac{1}{\lambda}(x - z) - (Gx - Gz) \in (G + T)z$, which shows that $(z, u) \in \gra{G+T}$.
In addition, by \cite[Proposition 1]{ioan2023relaxed}, we have $\norms{S_{\lambda}x} \leq (1 + \lambda L)\norms{x - z}$.

Now, for any $x^{\star}\in\zer{G+T}$, since $u := \frac{1}{\lambda}S_{\lambda}x \in (G+T)z$, by our assumption $\iprods{u, z - x^{\star}} \geq -\rho\norms{u}^2$, we have $\iprods{S_{\lambda}x, z - x^{\star}} \geq -\frac{\rho}{\lambda}\norms{S_{\lambda}x}^2$.
This inequality leads to
\myeqn{
\arraycolsep=0.2em
\begin{array}{lcl}
\iprods{S_{\lambda} x, x - x^{\star}} &\geq & \iprods{x - z - \lambda(Gx - Gz), x - z} - \frac{\rho}{\lambda}\norms{S_{\lambda}x}^2 \vspace{1ex}\\
& \geq &(1 - \lambda L)\norms{x - z}^2 - \frac{\rho}{\lambda}\norms{S_{\lambda}x}^2 \vspace{1ex}\\
& \geq & \left( \frac{1 - \lambda L}{(1 + \lambda L)^2} - \frac{\rho}{\lambda} \right)\norms{S_{\lambda}x}^2,
\end{array} 
}
which  proves \eqref{eq:S_property}.
Finally, since $\hat{\rho} := \frac{1 - \lambda L}{(1 + \lambda L)^2} - \frac{\rho}{\lambda}$, if $L(1 + L\rho)\lambda^2 - (1 - 2L\rho)\lambda + \rho \leq 0$, then $\hat{\rho} \geq 0$.
The last condition holds if $0 <  \frac{1 - 2L\rho - \sqrt{1 - 8L\rho}}{2L(1 + L\rho)} \leq \lambda \leq \frac{1 - 2L\rho + \sqrt{1 - 8L\rho}}{2L(1 + L\rho)}$, provided that $8L\rho \leq 1$.
\Eproof
\endproof

\beforesubsec
\subsection{The proof of Proposition~\ref{pro:practical_variant}.}\label{apdx:pro:practical_variant}
\aftersubsec
From \ref{eq:ARCOG_scheme}, we have $x^{k+1} - x^k = \theta_k(x^k - x^{k-1}) - \frac{\eta_k }{\mbf{p}_{i_k}} d^k_{[i_k]}$, where $d^k := Gx^k - \gamma_kGx^{k-1}$.
Let us assume that $\theta_k = \frac{\tau_{k+1}}{\tau_{k}}$ for a given positive sequence $\sets{\tau_k}$.
This relation leads to $\tau_{k+1} = \tau_k\theta_k$.
Moreover, we can write \ref{eq:ARCOG_scheme} as 
\myeqn{
\tfrac{1}{\tau_{k+1}}(x^{k+1} - x^k) = \tfrac{1}{\tau_k}(x^k - x^{k-1}) - \tfrac{\eta_k }{\mbf{p}_{i_k}\tau_{k+1}} d^k_{[i_k]}.
}
Now, if we introduce $w^k := \frac{1}{\tau_k}(x^k - x^{k-1})$, then \ref{eq:ARCOG_scheme} can be rewritten as
\myeqn{
x^k := x^{k-1} + \tau_k w^k \quad\text{and} \quad w^{k+1} := w^k - \tfrac{\eta_k }{\mbf{p}_{i_k}\tau_{k+1}} d^k_{[i_k]}.
}
By induction, we have $x^k = x^0 + \sum_{i=1}^k \tau_i w^i$.
Let us express this representation as
\myeqn{
\arraycolsep=0.3em
\begin{array}{lcl}
x^k &= & x^0 - \tau_1(w^2 - w^1) - (\tau_1 + \tau_2)(w^3 - w^2) - (\tau_1 + \tau_2 + \tau_3)(w^4 - w^3) \vspace{1ex}\\
&& - {~} \cdots - (\tau_1 + \cdots + \tau_{k-1} )(w^k - w^{k-1}) +  (\tau_1 + \cdots + \tau_{k-1} + \tau_k)w^k. 
\end{array}
}
Therefore, if we define $c_k := \sum_{i=1}^k\tau_i$ with a convention that $c_0 := 0$, and $\Delta{w}^k := w^{k+1} - w^k$, then we can write $x^k$ as
\myeqn{
\arraycolsep=0.3em
\begin{array}{lcl}
x^k &= & x^0 - \sum_{i=1}^{k-1} c_i\Delta{w}^i + c_kw^k.
\end{array}
}
If we introduce $z^k := x^0 - \sum_{i=1}^{k-1}c_i\Delta{w}^i$, then $z^k = z^{k-1} - c_{k-1}\Delta{w}^{k-1}$ with $z^0 := x^0$, and hence, $x^k = z^k + c_kw^k$.
Therefore, we can summarize our derivation above as
\myeqn{
\arraycolsep=0.3em
\left\{\begin{array}{lcl}
x^k & := & z^k + c_kw^k, \vspace{1ex}\\
w^{k+1} & := &  w^k - \frac{ \eta_k }{\mbf{p}_{i_k}\tau_{k+1}} d^k_{i_k}, \vspace{1ex}\\
z^{k+1} &:= & z^k - c_k\Delta{w}^k = z^k + \frac{c_k \eta_k }{\mbf{p}_{i_k}\tau_{k+1}} d^k_{i_k}.
\end{array}\right.
}
Eliminating $x^k$ and $x^{k-1}$ from $d^k$ of the last scheme, we can write \ref{eq:ARCOG_scheme} equivalently to \eqref{eq:ARCOG_practice_variant}.
Here, we set $z^0 = z^{-1} := x^0$ and $w^0 = w^{-1} := 0$.
Moreover, the parameters $\tau_k$ and $c_k$ are respectively updated as in \eqref{eq:ARCOG_practice_variant_param}.
From the above derivation, we conclude that $x^k := z^k + c_kw^k$ is identical to the one generated by \ref{eq:ARCOG_scheme}.
\Eproof

\beforesubsec
\subsection{The proof of Lemma~\ref{le:RCOG_descent}.}\label{apdx:subsec:le:RCOG_descent}
\aftersubsec
For simplicity of notation, we will use the following conventions.
For given vectors $\eta, \gamma, \sigma \in\R^n_{++}$, operator $G$, and vectors $x, v \in \R^p$, we denote $\gamma\eta := (\gamma_i\eta_i)_{i=1}^n$, $\eta^2 := (\eta_i^2)_{i=1}^n$, (elementwise product), $\eta \circ Gx := \sum_{i=1}^n\eta_i [Gx]_i$, $\eta \circ \iprods{Gx, v} := \sum_{i=1}^n\eta_i\iprods{[Gx]_i, v_i}$, and $\eta \circ \norms{Gx}^2 := \sum_{i=1}^n\eta_i\norms{[Gx]_i}^2$.
Let us divide our proof into several steps as follows.

\vspace{1ex}
\noindent\textit{\underline{Step 1}: Expanding $\norms{x^{k+1}  - x^{\star} + \gamma \eta \circ Gx^k}^2$}. 
First, we expand  $\norms{x^{k+1} - x^{\star} + \gamma \eta \circ Gx^k }^2$ of $\Pc_{k+1}$ as
\myeq{eq:RBC_PEG_proof1}{
\arraycolsep=0.2em
\begin{array}{lcl}
\Tc_{k+1} & := &  \norms{x^{k+1}  - x^{\star} +   \gamma\eta \circ Gx^k }^2  =  \norms{x^{k+1} - x^{\star}}^2 + 2\gamma\eta \circ \iprods{ Gx^{k}, x^{k+1} -x^{\star}}  +  \gamma^2\eta^2 \circ \norms{Gx^k}^2.
\end{array}
}
Next, using $x^{k+1}$ from \ref{eq:naRCOG_scheme}, we can expand the first term  of \eqref{eq:RBC_PEG_proof1} as follows:
\myeq{eq:RBC_PEG_proof2}{
\arraycolsep=0.2em
\begin{array}{lcl}
\norms{x^{k+1} -x^{\star}}^2 & \overset{\tiny\ref{eq:naRCOG_scheme}}{=} & \big\Vert x^k - x^{\star} - \frac{\eta_{i_k}}{\mbf{p}_{i_k}} \big( G_{[i_k]}x^k - \gamma_{i_k}  G_{[i_k]}x^{k-1} \big) \big\Vert^2  \vspace{0.5 ex}\\
& = & \norms{x^k - x^{\star}}^2 +  \frac{ \eta^2_{i_k} }{\mbf{p}_{i_k}^2}\norms{ [Gx^k]_{i_k} - \gamma_{i_k}  [Gx^{k-1}]_{i_k} }^2 \vspace{0.5ex}\\
&& - {~} \frac{ 2 \eta_{i_k}   }{ \mbf{p}_{i_k}} \iprods{ [Gx^k]_{i_k}, x^k_{i_k} -x^{\star}_{i_k} }  +  \frac{2 \gamma_{i_k}\eta_{i_k} }{\mbf{p}_{i_k}} \iprods{ [Gx^{k-1}]_{i_k}, x^k_{i_k} - x^{\star}_{i_k} }.
\end{array}
}
Alternatively, applying again \ref{eq:naRCOG_scheme}, we can process the second term of \eqref{eq:RBC_PEG_proof1} as
\myeq{eq:RBC_PEG_proof3}{
\hspace{-3ex}
\arraycolsep=0.1em
\begin{array}{lcl}
\iprods{Gx^k, x^{k+1} -x^{\star}}  &= & \iprods{Gx^k, x^k - x^{\star} -  \frac{\eta_{i_k} }{\mbf{p}_{i_k}} \big( G_{[i_k]}x^k  - \gamma_{i_k}\  G_{[i_k]}x^{k-1} \big) }   \vspace{1 ex}\\
& = & \iprods{ Gx^{k}, x^k - x^{\star}}  - \frac{\eta_{i_k}  }{\mbf{p}_{i_k}} \norms{ [Gx^k]_{i_k}}^2  + \frac{\gamma_{i_k} \eta_{i_k}   }{\mbf{p}_{i_k}} \iprods{ [Gx^k]_{i_k}, [Gx^{k-1}]_{i_k} }.
\end{array}
\hspace{-3ex}
}
Substituting \eqref{eq:RBC_PEG_proof2} and \eqref{eq:RBC_PEG_proof3} into \eqref{eq:RBC_PEG_proof1}, we can show that
\myeq{eq:RBC_PEG_proof4}{
\hspace{-2ex}
\arraycolsep=0.2em
\begin{array}{lcl}
\Tc_{k+1} & := & \norms{x^{k+1} - x^{\star} + \gamma\eta \circ Gx^k }^2  =  \norms{x^k - x^{\star}}^2  + 2 \gamma\eta \circ \iprods{Gx^{k}, x^k - x^{\star}}   +  \gamma^2\eta^2 \circ \norms{Gx^k}^2  \vspace{1ex}\\
&& + {~}  \frac{\eta^2_{i_k} }{\mbf{p}_{i_k}^2}\norms{ [Gx^k]_{i_k} - \gamma_{i_k} [Gx^{k-1}]_{i_k} }^2 - \frac{ 2 \eta_{i_k}   }{ \mbf{p}_{i_k}} \iprods{ [Gx^k]_{i_k}, x^k_{i_k} -x^{\star}_{i_k} }  \vspace{1ex}\\
&& + {~}  \frac{2 \gamma_{i_k}\eta_{i_k} }{\mbf{p}_{i_k}} \iprods{ [Gx^{k-1}]_{i_k}, x^k_{i_k} - x^{\star}_{i_k} } - \frac{2\gamma_{i_k}\eta_{i_k}^2 }{\mbf{p}_{i_k}} \norms{ [Gx^k]_{i_k} }^2  +  \frac{2\gamma_{i_k}^2\eta_{i_k}^2 }{\mbf{p}_{i_k}} \iprods{[Gx^k]_{i_k}, [Gx^{k-1}]_{i_k} }.
\end{array}
\hspace{-2ex}
}

\noindent\textit{\underline{Step 2}: Applying the conditional expectation to $\Tc_{k+1}$}.
Using \eqref{eq:prob_cond1}, we have
\myeq{eq:RBC_PEG_proof5}{
\hspace{-3ex}
\arraycolsep=0.2em
\left\{\begin{array}{lcl}
\E_k\big[  \frac{ \eta_{i_k} }{ \mbf{p}_{i_k} } \iprods{ [Gx^k]_{i_k}, x^k_{i_k} - x^{\star}_{i_k} } \big] & = & \eta \circ \iprods{ Gx^k, x^k - x^{\star}}, \vspace{1ex}\\
\E_k\big[ \frac{ \gamma_{i_k}\eta_{i_k}   }{ \mbf{p}_{i_k}}  \iprods{ [Gx^{k-1}]_{i_k}, x^k_{i_k} - x^{\star}_{i_k} } \big] & = & \gamma\eta \circ \iprods{  Gx^{k-1}, x^k - x^{\star} }, \vspace{1ex}\\
\E_k\big[ \frac{ \gamma_{i_k}\eta_{i_k}^2  }{ \mbf{p}_{i_k} } \norms{ [Gx^k]_{i_k} }^2 \big] &= &  \gamma\eta^2 \circ \norms{Gx^k}^2, \vspace{1ex}\\
\E_k\big[ \frac{ \gamma_{i_k}^2 \eta_{i_k}^2 }{ \mbf{p}_{i_k} } \iprods{ [Gx^k]_{i_k}, [Gx^{k-1}]_{i_k} } \big] &= & \gamma^2\eta^2 \circ \iprods{ Gx^k, Gx^{k-1} }, \vspace{1ex}\\
\E_k\big[  \frac{ \eta_{i_k}^2 }{\mbf{p}_{i_k}^2 }\norms{ G_{[i_k]}x^k - \gamma_{i_k} G_{[i_k]}x^{k-1}}^2 \big] &= & \sum_{i=1}^n \frac{\eta_i^2 }{\mbf{p}_i}\norms{ [Gx^k]_i  - \gamma_i  [Gx^{k-1}]_i }^2.
\end{array}\right.
\hspace{-10ex}
}
Taking the conditional expectation $\E_k[\cdot ] = \E_{i_k}[ \cdot\mid \Fc_k ]$ of \eqref{eq:RBC_PEG_proof4} and then using \eqref{eq:RBC_PEG_proof5}, we can prove that
\myeq{eq:RBC_PEG_proof7}{
\hspace{-5ex}
\arraycolsep=0.2em
\begin{array}{lcl}
\E_k\big[ \Tc_{k+1} \big] 
& = & \norms{x^k - x^{\star}}^2  + 2\gamma \eta \circ \iprods{Gx^{k-1}, x^k - x^{\star}}  +  \gamma^2\eta^2 \circ \norms{Gx^k}^2  \vspace{1ex}\\
&& - {~}   2(1 - \gamma)\eta \circ \iprods{Gx^k, x^k - x^{\star}}  -   2\gamma\eta^2 \circ \norms{Gx^k}^2   +  2\gamma^2\eta^2 \circ \iprods{Gx^k, Gx^{k-1} }  \vspace{1ex} \\
&& + {~}  \sum_{i=1}^n \frac{\eta^2_i }{\mbf{p}_i}\norms{  [Gx^k]_i  - \gamma_i [Gx^{k-1}]_i }^2.
\end{array}
\hspace{-6ex}
}

\noindent\textit{\underline{Step 3}: Using Assumption~\ref{as:A1}.}
From \eqref{eq:CE_assumption_A1_b} of Assumption~\ref{as:A1}, we also have
\myeq{eq:RBC_PEG_proof6}{
\hspace{-3ex}
\arraycolsep=0.1em
\left\{\begin{array}{lcl}
-\iprods{Gx^k, x^k - x^{\star}} & \leq  &  \rho \circ \norms{Gx^k}^2, \vspace{1ex}\\
2 \iprods{Gx^k, Gx^{k-1}} &= &  \norms{Gx^k}^2  +  \norms{ Gx^{k-1}}^2   -  \norms{Gx^k - Gx^{k-1}}^2.
\end{array}\right.
\hspace{-3ex}
}
Substituting \eqref{eq:RBC_PEG_proof6} into \eqref{eq:RBC_PEG_proof7}, we can further derive
\myeqn{ 
\arraycolsep=0.2em
\begin{array}{lcl}
\E_k\big[ \Tc_{k+1} \big] 
& \leq & \norms{x^k - x^{\star} + \gamma\eta \circ Gx^{k-1} }^2   -  2\gamma(1 - \gamma)\eta^2 \circ \norms{Gx^k}^2  -  \gamma^2\eta^2 \circ \norms{Gx^k - Gx^{k-1}}^2 \vspace{1ex}\\ 
&& + {~}  \sum_{i=1}^n \frac{\eta_i^2  }{\mbf{p}_i}\norms{ [Gx^k]_i  - \gamma_i [Gx^{k-1}]_i }^2  +   2(1 - \gamma) \eta \rho \circ \norms{Gx^k}^2.
\end{array}
}
Applying Young's inequality, for all $i \in [n]$, we can show that
\myeq{eq:RBC_PEG_proof8}{
\hspace{-2ex}
\arraycolsep=0.2em
\begin{array}{lcl}
 \norms{  [Gx^k]_i  -  \gamma_i [Gx^{k-1}]_i }^2 &\leq &  2\gamma_i^2 \norms{ [Gx^k]_i - [Gx^{k-1}]_i}^2 +  2(1 - \gamma_i)^2 \norms{[Gx^k]_i}^2.
\end{array}
\hspace{-2ex}
}
Substituting \eqref{eq:RBC_PEG_proof8} into last inequality of $\E_k\big[ \Tc_{k+1} \big]$, we can upper bound it as
\myeq{eq:RBC_PEG_proof10}{
\hspace{-2ex}
\arraycolsep=0.2em
\begin{array}{lcl}
\E_k\big[ \Tc_{k+1} \big] &\leq &  \norms{x^k - x^{\star} +  \gamma\eta \circ Gx^{k-1} }^2 +  \sum_{i=1}^n \gamma_i^2\eta^2_i \big( \frac{2}{\mbf{p}_i} - 1 \big) \cdot \norms{ [Gx^k]_i - [Gx^{k-1}]_i}^2. \vspace{1ex}\\
&& - {~}    \sum_{i=1}^n 2 \eta_i(1 - \gamma_i) \big[ \gamma_i \eta_i  -   \rho_i  - \frac{\eta_i(1 -\gamma_i)}{\mbf{p}_i}  \big]  \cdot \norms{ [Gx^k]_i}^2.
\end{array}
\hspace{-4ex}
}

\noindent\textit{\underline{Step 4}: Bounding $\E_k\big[\norms{x^{k+1} - x^k}^2_{\sigma} \big]$.}
Using again  \ref{eq:naRCOG_scheme}, \eqref{eq:RBC_PEG_proof5}, and \eqref{eq:RBC_PEG_proof8}, we have
\myeq{eq:RBC_PEG_proof9}{
\hspace{-2ex}
\arraycolsep=0.2em
\begin{array}{lcl}
\E_k\big[\norms{x^{k+1} - x^k}^2_{\sigma} \big] & \overset{\tiny\ref{eq:naRCOG_scheme}}{=} &   \E_k\big[ \frac{\sigma_{i_k}\eta^2_{i_k} }{\mbf{p}_{i_k}^2}\norms{ [Gx^k]_{i_k} -  \gamma_{i_k} [Gx^{k-1}]_{i_k} }^2 \big]  
\overset{\tiny\eqref{eq:RBC_PEG_proof5}}{=}  \sum_{i=1}^n\frac{ \sigma_i \eta_i^2  }{\mbf{p}_i}\norms{ [Gx^k]_i  - \gamma_i [Gx^{k-1}]_i }^2 \vspace{0.5ex}\\
& \overset{\tiny\eqref{eq:RBC_PEG_proof8}}{\leq} &   \sum_{i=1}^n\frac{2  \sigma_i \gamma_i^2 \eta_i^2 }{\mbf{p}_i}\norms{ [Gx^k]_i - [Gx^{k-1}]_i}^2 +  \sum_{i=1}^n \frac{2 \sigma_i \eta_i^2(1 - \gamma_i)^2 }{\mbf{p}_i} \norms{[Gx^k]_i}^2.
\end{array}
\hspace{-3ex}
}

\noindent\textit{\underline{Step 5}: Deriving \eqref{eq:RCOG_descent}.}
First, adding \eqref{eq:RBC_PEG_proof9} to \eqref{eq:RBC_PEG_proof10} and then utilizing \eqref{eq:RBC_PEG_potential}, we get
\myeqn{
\arraycolsep=0.2em
\begin{array}{lcl}
\E_k\big[\Pc_{k+1}\big] & \overset{\tiny\eqref{eq:RBC_PEG_potential}}{:=} & \E_k\big[ \Tc_{k+1} \big] +  \E_k\big[\norms{x^{k+1} - x^k}^2_{ \sigma } \big]  \vspace{1ex}\\
&\leq &  \norms{x^k  - x^{\star} + \gamma\eta \circ Gx^{k-1} }^2  +  \sum_{i=1}^n \frac{\gamma_i^2 \eta_i^2 ( 2  - \mbf{p}_i  +  2\sigma_i ) }{\mbf{p}_i} \cdot \norms{ [Gx^k]_i - [Gx^{k-1}]_i}^2. \vspace{1ex}\\
&& - {~}    \sum_{i=1}^n 2 \eta_i(1 - \gamma_i) \big[ \gamma_i\eta_i   -   \rho_i  - \frac{\eta_i(1 - \gamma_i)(1 + \sigma_i)}{\mbf{p}_i} \big] \cdot \norms{ [Gx^k]_i}^2.
\end{array}
}
Next, leveraging \eqref{eq:CE_assumption_A1_a} and imposing the condition $\frac{\gamma_i^2 \eta_i^2  (2 - \mbf{p}_i  + 2\sigma_i )}{\mbf{p}_i} \leq \frac{\sigma_i}{L_i^2}$ for all $i\in [n]$, we can derive from the last estimate that
\myeqn{ 
\hspace{-3ex}
\arraycolsep=0.2em
\begin{array}{lcl}
\E_k\big[\Pc_{k+1}\big] & \leq &  \Pc_k = \norms{x^k +  \gamma\eta \circ Gx^{k-1} - x^{\star}}^2   + \norms{x^k - x^{k-1}}^2_{\sigma} \vspace{1ex}\\
&& - {~}   \sum_{i=1}^n 2 \eta_i(1 - \gamma_i)\big[ \gamma_i\eta_i  - \rho_i  -  \frac{\eta_i(1 - \gamma_i)(1 + \sigma_i) }{\mbf{p}_i} \big] \cdot \norms{ [Gx^k]_i}^2,
\end{array}
\hspace{-3ex}
}
which proves  \eqref{eq:RCOG_descent}.
Finally, notice that the condition $\frac{\gamma_i^2 \eta_i^2  (2  - \mbf{p}_i + 2 \sigma_i ) }{\mbf{p}_i} \leq \frac{\sigma_i}{L_i^2}$ above is equivalent to $L_i^2\gamma_i^2\eta_i^2 ( 2 - \mbf{p}_i + 2 \sigma_i )  \leq \mbf{p}_i \sigma_i$ for all $i\in [n]$ as stated in Lemma~\ref{le:RCOG_descent}.
\Eproof

\beforesubsec
\subsection{The proof of Lemma~\ref{le:ARCOG_key_est1}.}\label{apdx:subsec:le:ARCOG_key_est1}
\aftersubsec
Given $r > 0$ and $\mu > 0$, we introduce the following function:
\myeq{eq:Qck_func}{
\Qc_k := \norms{r(x^{k-1} - x^{\star}) + t_k(x^k - x^{k-1}) }^2 + \mu r \norms{x^{k-1} - x^{\star}}^2.
}
Now, we divide the proof of Lemma~\ref{le:ARCOG_key_est1} into several steps as follows.

\vspace{0.5ex}
\noindent\textit{\underline{Step 1}: Expanding $\Qc_k - \Qc_{k+1}$}.
If we denote $d^k := Gx^k  -  \gamma_kGx^{k-1}$, then we can rewrite \ref{eq:ARCOG_scheme} as $x^{k+1} - x^k = \theta_k(x^k - x^{k-1}) -  \frac{ \eta_k }{\mbf{p}_{i_k}}d_{[i_k]}^k$.
Using this representation, we can show that
\myeqn{ 
\arraycolsep=0.2em
\begin{array}{lcll}
 \norms{r(x^k - x^{\star}) + t_{k+1}(x^{k+1} - x^k) }^2 & = & \norms{r(x^k - x^{\star}) + t_{k+1}\theta_k(x^k - x^{k-1}) -    t_{k+1} \eta_k \mbf{p}_{i_k}^{-1}d_{[i_k]}^k }^2 \vspace{1ex}\\
&= & r^2 \norms{x^k - x^{\star}}^2 + t_{k+1}^2\theta_k^2\norms{x^k - x^{k-1}}^2 +  t_{k+1}^2  \eta_k^2  \norms{\mbf{p}_{i_k}^{-1}d^k_{[i_k]}}^2 \vspace{1ex}\\
&& + {~}   2r t_{k+1}\theta_k\iprods{x^k - x^{k-1}, x^k - x^{\star}}  - 2r  t_{k+1} \eta_k \iprods{\mbf{p}_{i_k}^{-1}d^k_{[i_k]}, x^k - x^{\star}}  \vspace{1ex}\\
&& - {~}  2 t_{k+1}^2 \theta_k \eta_k \iprods{\mbf{p}_{i_k}^{-1}d^k_{[i_k]}, x^k - x^{k-1}}.
\end{array}
}
Alternatively, we can expand
\myeqn{
\arraycolsep=0.1em
\begin{array}{lcll}
\norms{r(x^{k-1} - x^{\star}) + t_k(x^k - x^{k-1}) }^2 &= & \norms{r(x^k - x^{\star}) + (t_k - r)(x^k - x^{k-1})}^2 \vspace{1ex}\\
&= & r^2 \norms{x^k - x^{\star}}^2 + 2r(t_k - r)\iprods{x^k - x^{k-1}, x^k - x^{\star}} +  (t_k - r)^2\norms{x^k - x^{k-1}}^2.
\end{array}
}
In addition, for $r > 0$ and $\mu \geq 0$, we also have the following identity
\myeqn{  
\arraycolsep=0.3em
\begin{array}{lcl}
\mu r \norms{x^{k-1} - x^{\star}}^2 - \mu r \norms{x^k - x^{\star}}^2 &= & \mu r \norms{x^k - x^{k-1}}^2 - 2\mu r \iprods{x^k - x^{k-1}, x^k - x^{\star}}.
\end{array}
}
Combining the last three expressions, and using  $\Qc_k$ from \eqref{eq:Qck_func}, we can show that
\myeq{eq:le4.1_proof1}{
\arraycolsep=0.2em
\begin{array}{lcl}
\Qc_k - \Qc_{k+1} &= & \left[ (t_k - r)^2 - t_{k+1}^2 \theta_k^2  + \mu r \right]\norms{x^k - x^{k-1}}^2  -  t_{k+1}^2 \eta_k^2 \norms{\mbf{p}_{i_k}^{-1}d_{[i_k]}^k}^2 \vspace{1ex}\\
&& + {~}   2r t_{k+1} \eta_k  \iprods{ \mbf{p}_{i_k}^{-1}d^k_{[i_k]}, x^k - x^{\star}} +  2 t_{k+1}^2\theta_k \eta_k \iprods{ \mbf{p}_{i_k}^{-1} d^k_{[i_k]}, x^k - x^{k-1}} \vspace{1ex}\\
&& + {~} 2r \left( t_k - r - \mu -  t_{k+1} \theta_k  \right) \iprods{ x^k - x^{k-1}, x^k - x^{\star} }.
\end{array}
}

\noindent\textit{\underline{Step 2}: Applying the conditional expectation.}
Next, since $d^k_{[i_k]} = [0, \cdots, 0, d_{i_k}^k, 0, \cdots, 0]$ and $d^k = Gx^k  -  \gamma_kGx^{k-1}$, applying \eqref{eq:prob_cond1}, we can easily show that
\myeq{eq:le4.1_proof2}{
\arraycolsep=0.2em
\left\{\begin{array}{lclcl}
\E_k\big[ \norms{\mbf{p}_{i_k}^{-1}d_{[i_k]}^k}^2 \big]  & = &  \sum_{i=1}^n\mbf{p}_i^{-1}\norms{ d^k_i}^2 & = &  \sum_{i=1}^n\mbf{p}_i^{-1}\norms{ [Gx^k]_i - \gamma_k [Gx^{k-1}]_i }^2,   \vspace{1ex}\\
\E_k\big[ \iprods{\mbf{p}_{i_k}^{-1} d_{[i_k]}^k, x^k - x^{\star}} \big]  & = &   \sum_{i=1}^n\iprods{d^k_i , x_i^k - x_i^{\star}} & = &  \iprods{ Gx^k - \gamma_k Gx^{k-1}, x^k - x^{\star}}, \vspace{1ex}\\
\E_k\big[ \iprods{\mbf{p}_{i_k}^{-1} d_{[i_k]}^k, x^k - x^{k-1}} \big]  & = &  \sum_{i=1}^n \iprods{ d^k_i , x_i^k - x_i^{k-1}} & = &  \iprods{ Gx^k - \gamma_k Gx^{k-1} , x^k - x^{k-1}}.
\end{array}\right.
}
Taking the conditional expectation $\Exps{k}{\cdot}$ both sides of \eqref{eq:le4.1_proof1} and then using \eqref{eq:le4.1_proof2}, we get 
\myeqn{
\arraycolsep=0.2em
\begin{array}{lcl}
\Qc_k - \Exps{k}{\Qc_{k+1}} &= & \left[ (t_k - r)^2 - t_{k+1}^2  \theta_k^2 + \mu r \right]\norms{x^k - x^{k-1}}^2  - t_{k+1}^2  \eta_k^2 \sum_{i=1}^n \frac{1}{\mbf{p}_i}\norms{ [Gx^k]_i -   \gamma_k[Gx^{k-1}]_i}^2  \vspace{1ex}\\
&& + {~} 2r t_{k+1}\eta_k  \iprods{Gx^k, x^k - x^{\star}}  -  2r t_{k+1}\gamma_k\eta_k  \iprods{ Gx^{k-1}, x^{k-1} - x^{\star}}  \vspace{1ex}\\
&& + {~}  2 t_{k+1}^2\theta_k\eta_k   \iprods{ Gx^k  - Gx^{k-1}, x^k - x^{k-1}} \vspace{1ex}\\
&& + {~} 2 t_{k+1}\eta_k \left[ t_{k+1}\theta_k(1 - \gamma_k)  - r \gamma_k  \right]   \iprods{ Gx^{k-1}, x^k - x^{k-1}} \vspace{1ex}\\
&& +  {~}  2r \left( t_k - r - \mu - t_{k+1}\theta_k \right) \iprods{  x^k - x^{k-1}, x^k - x^{\star} }.
\end{array}
}

\noindent\textit{\underline{Step 3}:  Lower bounding $\Qc_k - \Exps{k}{\Qc_{k+1}}$ by using  \eqref{eq:CE_assumption_A2}.}
Utilizing \eqref{eq:CE_assumption_A2} in Assumption~\ref{as:A2} into the last expression, we arrive at
\myeqn{ 
\hspace{-1ex}
\arraycolsep=0.2em
\begin{array}{lcl}
\Qc_k - \Exps{k}{\Qc_{k+1}} & \geq & \big[ (t_k - r)^2 - t_{k+1}^2 \theta_k^2  + \mu r \big]\norms{x^k - x^{k-1}}^2  -  t_{k+1}^2 \eta_k^2 \sum_{i=1}^n\frac{1}{\mbf{p}_i}\norms{  [Gx^k]_i -   \gamma_k[Gx^{k-1}]_i}^2   \vspace{1ex}\\
&& + {~}    2r t_{k+1}\eta_k \iprods{Gx^k, x^k - x^{\star}} - 2r t_{k+1}\gamma_k\eta_k  \iprods{Gx^{k-1}, x^{k-1} - x^{\star}}  \vspace{1ex}\\
&& + {~} 2  t_{k+1}^2\theta_k\eta_k  \bar{\beta} \circ \norms{Gx^k  - Gx^{k-1}}^2 \vspace{1ex}\\
&& + {~}  2  t_{k+1}\eta_k \big[ t_{k+1}\theta_k(1 - \gamma_k)  -  r\gamma_k  \big] \iprods{Gx^{k-1}, x^k - x^{k-1}} \vspace{1ex}\\
&& + {~} 2 r \big( t_k - r - \mu - t_{k+1}\theta_k \big) \iprods{ x^k - x^{k-1}, x^k - x^{\star} }.
\end{array}
\hspace{-1ex}
}
Adding and subtracting $2  t_{k+1}^2\theta_k\eta_k \beta \circ \norms{Gx^k  - Gx^{k-1} }^2$,  we can expand the last estimate as
\myeq{eq:le4.1_proof5}{
\hspace{-1ex}
\arraycolsep=0.2em
\begin{array}{lcl}
\Qc_k - \Exps{k}{\Qc_{k+1}} & \geq & \big[ (t_k - r)^2 - t_{k+1}^2\theta_k^2 + \mu r  \big] \norms{x^k - x^{k-1}}^2  \vspace{1ex} \\
&& + {~} 2  t_{k+1}^2\theta_k\eta_k   (\bar{\beta} - \beta) \circ \norms{ Gx^k  -  Gx^{k-1} }^2 \vspace{1ex}\\
&& +  {~}  2r t_{k+1}\eta_k \big[ \iprods{Gx^k, x^k - x^{\star}}   -  \beta \circ \norms{ Gx^k }^2 \big] \vspace{1ex}\\
&& - {~}   2r t_{k+1}\gamma_k\eta_k \big[ \iprods{Gx^{k-1}, x^{k-1} - x^{\star}} -   \beta \circ \norms{ Gx^{k-1} }^2 \big] \vspace{1ex}\\
&& + {~}  t_{k+1}\eta_k \sum_{i=1}^n \frac{ [ 2\beta_i\mbf{p}_i (t_{k+1}\theta_k  + r )  -  t_{k+1}\eta_k ] }{\mbf{p}_i} \norms{[Gx^k]_i}^2 \vspace{1ex}\\
&& + {~}  t_{k+1}\eta_k \sum_{i=1}^n \frac{ [ 2\beta_i\mbf{p}_i (t_{k+1}\theta_k - r \gamma_k)   - t_{k+1} \eta_k \gamma_k^2 ] }{\mbf{p}_i} \norms{[Gx^{k-1}]_i}^2 \vspace{1ex}\\
&& - {~}   2 t_{k+1}^2\eta_k \sum_{i=1}^n \frac{ \left( 2 \beta_i\mbf{p}_i \theta_k -  \gamma_k\eta_k \right) }{\mbf{p}_i}   \iprods{[Gx^k]_i, [Gx^{k-1}]_i} \vspace{1ex}\\ 
&& + {~} 2  t_{k+1}\eta_k \big[ t_{k+1}\theta_k(1 - \gamma_k)  -  r\gamma_k  \big] \iprods{Gx^{k-1}, x^k - x^{k-1}} \vspace{1ex}\\
&& + {~} 2r \big( t_k - r - \mu -  t_{k+1} \theta_k \big) \iprods{ x^k - x^{k-1}, x^k - x^{\star} }.
\end{array}
\hspace{-2ex}
}

\noindent\textit{\underline{Step 4}:  Simplifying to obtain \eqref{eq:ARCOG_scheme_main_est1}.}
We first impose the following two conditions:
\myeq{eq:le4.1_para_conds_00}{
t_k - r - \mu - t_{k+1} \theta_k = 0 \quad \text{and} \quad  t_{k+1}\theta_k(1 - \gamma_k)  -  r \gamma_k   =  0.
}
These conditions lead to $\theta_k := \frac{t_k - r- \mu}{t_{k+1}}$ and $\gamma_k := \frac{t_{k+1}\theta_k}{t_{k+1}\theta_k + r} = \frac{t_k - r - \mu}{t_k - \mu}$, respectively as stated in \eqref{eq:ARCOG_scheme_para_cond2}.
Exploiting the condition \eqref{eq:le4.1_para_conds_00}, we can show that
\myeqn{
\hspace{-1ex}
\arraycolsep=0.2em
\left\{ \begin{array}{lclcl}
A_i^k & := &  \frac{ t_{k+1}\eta_k [ 2\beta_i\mbf{p}_i (t_{k+1}\theta_k  + r) -  t_{k+1}\eta_k ] }{\mbf{p}_i} 
& = & \frac{ t_{k+1}\eta_k [ 2\beta_i\mbf{p}_i (t_{k+1}\theta_k + r) - t_{k+1}\eta_k ] }{\mbf{p}_i}, \vspace{1ex}\\
B^k_i & := &  \frac{ t_{k+1}^2 \eta_k( 2 \beta_i\mbf{p}_i \theta_k -   \gamma_k \eta_k ) }{\mbf{p}_i} 
&= &  \frac{ t_{k+1}\eta_k [ 2\beta_i \mbf{p}_i ( t_{k+1}\theta_k  + r) -  t_{k+1}\eta_k ] }{ \mbf{p}_i  } \cdot \gamma_k, \vspace{1ex}\\
C^k_i &:= &   \frac{ t_{k+1}\eta_k [  2\beta_i\mbf{p}_i (t_{k+1}\theta_k  - r \gamma_k)  -  t_{k+1} \eta_k \gamma_k^2 ] }{\mbf{p}_i} 
&=& \frac{ t_{k+1} \eta_k [ 2\beta_i\mbf{p}_i (t_{k+1}\theta_k + r) -  t_{k+1}\eta_k ] }{ \mbf{p}_i } \cdot \gamma_k^2.
\end{array}\right.
\hspace{-4ex}
}
Next, we utilize these three coefficients $A_i^k$, $B_i^k$, and $C_i^k$ to prove that
\myeqn{
\arraycolsep=0.3em
\begin{array}{lcl}
\widehat{\Tc}_{[1]}^{[i]} &:= & \frac{ t_{k+1}\eta_k [  2  \beta_i\mbf{p}_i  (t_{k+1}\theta_k  + r) -   t_{k+1}\eta_k ] }{\mbf{p}_i}  \norms{[Gx^k]_i}^2  -   \frac{ 2  t_{k+1}^2\eta_k ( 2 \beta_i\mbf{p}_i \theta_k  -   \gamma_k\eta_k ) }{\mbf{p}_i}  \iprods{[Gx^k]_i, [Gx^{k-1}]_i } \vspace{1ex}\\ 
&& + {~} \frac{ t_{k+1} \eta_k [ 2  \beta_i\mbf{p}_i  (t_{k+1}\theta_k  - r\gamma_k)  - t_{k+1} \eta_k \gamma_k^2 ] }{\mbf{p}_i}   \norms{[Gx^{k-1}]_i}^2  \vspace{1ex}\\
&= &  \frac{  t_{k+1}\eta_k [ 2\beta_i \mbf{p}_i ( t_{k+1}\theta_k + r) -    t_{k+1}\eta_k ] }{ \mbf{p}_i} \norms{  [Gx^k]_i - \gamma_k [Gx^{k-1}]_i }^2.
\end{array}
}
Finally, using again \eqref{eq:le4.1_para_conds_00} and $\widehat{\Tc}_{[1]}^{[i]}$ above,  we can simplify \eqref{eq:le4.1_proof5} as
\myeqn{
\hspace{-1ex}
\arraycolsep=0.2em
\begin{array}{lcl}
\Qc_k - \Exps{k}{\Qc_{k+1}} & \geq & \mu( 2t_k - r - \mu) \norms{x^k - x^{k-1}}^2  +   2r  t_{k+1}\eta_k  \big[ \iprods{Gx^k, x^k - x^{\star}}   -  \beta \circ \norms{  Gx^k }^2 \big] \vspace{1ex} \\
&& - {~}  2r t_{k+1}\gamma_k\eta_k  \big[ \iprods{Gx^{k-1}, x^{k-1} - x^{\star}} -  \beta \circ \norms{ Gx^{k-1} }^2 \big] \vspace{1ex}\\
&& + {~} 2 t_{k+1}^2\theta_k\eta_k   (\bar{\beta}  - \beta ) \circ \norms{ Gx^k  -  Gx^{k-1} }^2 \vspace{1ex}\\
&& + {~} \sum_{i=1}^n \frac{ t_{k+1} \eta_k [  2 \beta_i \mbf{p}_i ( t_{k+1}\theta_k + r ) - t_{k+1}\eta_k  ] }{ \mbf{p}_i } \norms{ [Gx^k]_i - \gamma_k [Gx^{k-1}]_i }^2.
\end{array}
\hspace{-4ex}
}
Finally, rearranging the last expression and using  the definition of $\hat{\Pc}_k$ from \eqref{eq:ARCOG_scheme_Lyfunc}, we obtain \eqref{eq:ARCOG_scheme_main_est1}.
\Eproof

\textbf{\color{blue}We will need the following version of Opial's lemma, whose proof is similar to \cite[Proposition 4.1]{davis2022variance}. (This is a correction for the published version on MOR).}

{\color{blue}
\begin{lemma}\label{le:opial_lemma}
Suppose that $G : \R^p\to\R^p$ is continuous and $\zer{G}\neq\emptyset$.
Let $\sets{x^k}$ be a sequence of random vectors such that for all $x^{\star} \in \zer{G}$, the sequence $\sets{\norms{x^k - x^{\star}}^2}$ almost surely converges to a $[0, \infty)$-valued random variable. 
In addition, assume that $\sets{\norms{Gx^k}}$ also almost surely converges to zero.
Then, $\sets{x^k}$ almost surely converges to a $\zer{G}$-valued random variable.
\end{lemma}}

%

To prove the almost sure convergence in Theorem~\ref{th:ARCOG_convergence} we will use the following lemma.

\begin{lemma}\label{le:ARCOG_xk_convergence}
Under the same conditions as in Theorem~\ref{th:ARCOG_convergence}, almost surely, we have
\myeq{eq:ARCOG_almost_sure}{
\arraycolsep=0.2em
\begin{array}{ll}
& \sum_{k=0}^{\infty}(2k + r + 1) \norms{x^k - x^{k-1}}^2 < +\infty, \vspace{1.25ex}\\
&\sum_{k=0}^{\infty}(k+r)^2  \norms{ Gx^k - \gamma_k Gx^{k-1} }^2 < +\infty, \vspace{1.25ex}\\
& \sum_{k=0}^{+\infty}(k + r + 1) \norms{Gx^k}^2  < +\infty, \vspace{1.5ex} \\ 
& \lim_{k\to\infty} k^2\norms{x^{k+1} - x^k}^2 = 0, \quad \textrm{and} \quad \lim_{k\to\infty} k^2\norms{Gx^k}^2 = 0.
\end{array}
}
Moreover, $\lim_{k\to\infty} \norms{x^k - x^{\star}}^2$ exists almost surely for any $x^{\star} \in \zer{G}$.
\end{lemma}

\proof{\textbf{Proof.}}
We divide this proof into the following steps.

\vspace{0.5ex}
\noindent\textit{\underline{Step 1}: The first two summable bounds in \eqref{eq:ARCOG_almost_sure}.}
First, similar to the proof of \eqref{eq:ARCOG_scheme_proof2001}, under the conditions of $\beta_i$ and $\omega$, we have  $\underline{\Lambda}_0 := \min\big\{ \frac{ 2\beta_i\mbf{p}_i - \omega }{ \mbf{p}_i } : i \in [n]  \big\} > 0$.
Thus \eqref{eq:ARCOG_scheme_main_est1} reduces to
\myeqn{
\arraycolsep=0.2em
\begin{array}{lcl}
\E_k\big[ \hat{\Pc}_{k+1} \big] & \leq & \hat{\Pc}_k - (2k + r + 1) \norms{x^k - x^{k-1}}^2 -   \omega  \underline{\Lambda}_0 (k+r)^2  \norms{ Gx^k - \gamma_k Gx^{k-1} }^2.
\end{array}
}
Now, applying the Robbins and Siegmund Supermartingale Convergence Theorem in \cite[Theorem 1]{robbins1971convergence} we conclude that $\lim_{k\to\infty} \hat{\Pc}_k$ exists almost surely.
Moreover, almost surely, we also have 
\myeq{eq:ARCOG_almost_sure_proof1}{
\arraycolsep=0.2em
\begin{array}{ll}
\sum_{k=0}^{\infty}(2k + r + 1) \norms{x^k - x^{k-1}}^2 < +\infty \quad \textrm{and} \quad \sum_{k=0}^{\infty}(k+r)^2  \norms{ Gx^k - \gamma_k Gx^{k-1} }^2 < +\infty.
\end{array}
}
These prove the first two summable bounds in \eqref{eq:ARCOG_almost_sure}.

\vspace{0.5ex}
\noindent\textit{\underline{Step 2}: The third summable bound in \eqref{eq:ARCOG_almost_sure}.}
Next, similar to the proof of \eqref{eq:ARCOG_scheme_2023_proof1}, but without taking the full expectation, with $v^k := x^{k+1} - x^k$ and $\psi := \max_{i \in [n]}\big( \frac{1-\mbf{p}_i}{\mbf{p}_i} \big)$, we have
\myeqn{
\hspace{-3ex}
\arraycolsep=0.1em
\begin{array}{ll}
(k+r-1)(k+r+2) & \Exps{k}{\norms{v^k + \eta_k Gx^k}^2}  \leq   (k+r-2)(k + r + 1) \norms{v^{k-1} +  \eta_{k-1} Gx^{k-1}}^2  \vspace{1ex}\\
& - {~} (r-2)(k + r + 1) \norms{ v^{k-1} + \eta_{k-1} Gx^{k-1}}^2  \vspace{1ex}\\
& + {~}  \psi \omega^2 (k + r)^2 \cdot \norms{  Gx^k  -  \gamma_k Gx^{k-1}  }^2 +  \frac{ 4(k+r+1)}{r} \cdot \norms{x^k - x^{k-1}}^2.
\end{array}
\hspace{-6ex}
}
Since the last two terms are summable almost surely as proven in \eqref{eq:ARCOG_almost_sure_proof1}, applying  again \cite[Theorem 1]{robbins1971convergence}, we conclude that  $\lim_{k\to\infty}(k+r-2)(k + r + 1) \norms{v^{k-1} +  \eta_{k-1} Gx^{k-1}}^2$ exists almost surely.
Moreover, we also obtain $\sum_{k=0}^{\infty}(k + r + 1) \norms{ v^{k-1} + \eta_{k-1} Gx^{k-1}}^2 < +\infty$ almost surely.
Combining both statements, almost surely, we conclude that 
\myeq{eq:ARCOG_almost_sure_proof2}{
\arraycolsep=0.2em
\begin{array}{ll}
& \lim_{k\to\infty}(k+r-2)(k + r + 1) \norms{v^{k-1} +  \eta_{k-1} Gx^{k-1}}^2 = 0, \vspace{1.25ex}\\
& \sum_{k=0}^{\infty}(k + r + 1) \norms{ v^{k-1} + \eta_{k-1} Gx^{k-1}}^2 < +\infty.
\end{array}
}
Utilizing \eqref{eq:RCHP_scheme00_proof4}, the first summable bound in \eqref{eq:ARCOG_almost_sure_proof1}, and the second line of \eqref{eq:ARCOG_almost_sure_proof2}, we can easily show that $ \sum_{k=0}^{+\infty}(k + r + 1) \norms{Gx^k}^2 < +\infty$ almost surely, which proves the third line of \eqref{eq:ARCOG_almost_sure}.

\vspace{0.5ex}
\noindent\textit{\underline{Step 3}: The last two limits in \eqref{eq:ARCOG_almost_sure}.}
Now, similar to the proof of \eqref{eq:RCHP_scheme00_proof7_01} but without taking the full expectation, we get
\myeqn{
\hspace{-2ex}
\arraycolsep=0.2em
\begin{array}{lcl}
(k+r+2)^2\Exps{k}{\norms{x^{k+1} - x^k}^2} & \leq & (k+r+1)^2  \norms{x^k - x^{k-1}}^2  - (r+2)(k + r)  \norms{x^k - x^{k-1}}^2 \vspace{1ex}\\
&& + {~}  \frac{\omega^2(k+r)^2}{\mbf{p}_{\min}} \cdot \norms{ Gx^k - \gamma_k Gx^{k-1} }^2 +  \omega^2r k \cdot \norms{Gx^k}^2.
\end{array}
\hspace{-4ex}
}
Since the last two terms are summable almost surely due to \eqref{eq:ARCOG_almost_sure_proof1} and \eqref{eq:ARCOG_almost_sure_proof2}, applying one more time \cite[Theorem 1]{robbins1971convergence}, we obtain that $\lim_{k\to\infty}  (k+r+1)^2  \norms{x^k - x^{k-1}}^2$ exists almost surely and $\sum_{k=0}^{\infty} (k + r)  \norms{x^k - x^{k-1}}^2 < +\infty$ almost surely.
These two statements imply the first limit in the last line of \eqref{eq:ARCOG_almost_sure}, that is
\myeq{eq:ARCOG_almost_sure_proof3}{
\arraycolsep=0.2em
\begin{array}{ll}
& \lim_{k\to\infty}  (k+r+1)^2  \norms{x^k - x^{k-1}}^2 = 0 \quad \textrm{almost surely}.
\end{array}
}
Combining the limit in the first line of \eqref{eq:ARCOG_almost_sure_proof2}, the limit in \eqref{eq:ARCOG_almost_sure_proof3}, and \eqref{eq:RCHP_scheme00_proof4}, we can prove that $\lim_{k\to\infty} k^2\norms{Gx^k}^2 = 0$ almost surely, which proves the second limit in the last line of \eqref{eq:ARCOG_almost_sure}.

\vspace{0.5ex}
\noindent\textit{\underline{Step 4}: The existence of $\lim_{k\to\infty} \norms{x^k - x^{\star}}^2$.}
Furthermore, for $\mu = 1$, it is obvious to see that $r \norms{x^k - x^{\star}}^2 \leq \hat{\Pc}_k$ for all $k\geq 0$.
Combining this relation and the existence of $\lim_{k\to\infty}\hat{\Pc}_k$ almost surely, there exists $M > 0$ such that almost surely
\myeq{eq:lmA3_proof1}{
\norms{x^k - x^{\star}}^2  \leq  M, \quad \textrm{for all} \ k\geq 0.
}
Since $\frac{\omega r}{r+2} \leq \eta_k \leq \omega$ for all $k\geq 0$ due to \eqref{eq:ARCOG_scheme_update_pars} and $t_k := k + r + 1$, by the Cauchy-Schwarz inequality, \eqref{eq:lmA3_proof1}, and the last limit of \eqref{eq:ARCOG_almost_sure}, we can show that almost surely
\myeqn{
\arraycolsep=0.2em
\begin{array}{lcl}
\vert t_k\eta_{k-1}\iprods{Gx^{k-1}, x^{k-1} - x^{\star}}  \vert^2 & \leq & \vert  t_k\iprods{Gx^{k-1}, x^{k-1} - x^{\star}}  \vert^2 \vspace{1ex}\\
& \leq &   \norms{x^{k-1} - x^{\star}}^2 \cdot t_k^2\norms{Gx^{k-1}}^2 \vspace{0.5ex}\\
& \overset{\tiny\eqref{eq:lmA3_proof1}}{\leq} & M  t_k^2\norms{Gx^{k-1}}^2   \overset{\tiny \eqref{eq:ARCOG_almost_sure} }{ \to } 0 \quad \textrm{as} \ k \to \infty.
\end{array}
}
Thus we get $\lim_{k\to \infty} \vert t_k\eta_{k-1}\iprods{Gx^{k-1}, x^{k-1} - x^{\star} }\vert = 0$ almost surely.
Moreover, from the third line of \eqref{eq:ARCOG_almost_sure}, we have $\lim_{k\to \infty} t_k\eta_{k-1} \norms{Gx^{k-1}}^2  = 0$ almost surely.
Combining the last two limits yields 
\myeq{eq:lmA3_proof2}{
\lim_{k\to\infty} t_k\eta_{k-1} \big[ \iprods{Gx^{k-1}, x^{k-1} - x^{\star}} -  \beta \circ \norms{Gx^{k-1}}^2 \big] = 0 \quad\textrm{almost surely}.
}
Similarly, by the Cauchy-Schwarz inequality, \eqref{eq:lmA3_proof1}, and the last line of \eqref{eq:ARCOG_almost_sure}, we have, almost surely 
\myeqn{
\arraycolsep=0.2em
\begin{array}{lcl}
\vert  t_k\iprods{x^{k-1} - x^{\star}, x^k - x^{k-1}}  \vert^2 & \leq &   \norms{x^{k-1} - x^{\star}}^2 \cdot t_k^2\norms{x^k - x^{k-1}}^2 \vspace{1ex}\\
& \overset{\tiny\eqref{eq:lmA3_proof1}}{\leq} & M  t_k^2\norms{x^k - x^{k-1}}^2  \overset{\tiny \eqref{eq:ARCOG_almost_sure} }{ \to } 0 \quad \textrm{as} \ k \to \infty.
\end{array}
}
Hence, we conclude that
\myeq{eq:lmA3_proof3}{
\lim_{k\to\infty}\vert t_k \iprods{x^{k-1} - x^{\star}, x^k - x^{k-1}}  \vert = 0 \quad \textrm{almost surely}.
}
Finally, from \eqref{eq:ARCOG_scheme_Lyfunc}, with $\mu = 1$, we can rewrite $\hat{\Pc}_k$ as
\myeqn{
\arraycolsep=0.2em
\begin{array}{lcll}
 \hat{\Pc}_k  & = & 2 r t_k\eta_{k-1} \big[  \iprods{Gx^{k-1}, x^{k-1} - x^{\star}} -  \beta \circ \norms{Gx^{k-1}}^2  \big] + 2r t_k  \iprods{x^{k-1} - x^{\star}, x^k - x^{k-1}}     \vspace{1ex} \\
&& + {~} t_k^2 \norms{x^k - x^{k-1} }^2 +  r(r+1) \norms{x^{k-1} - x^{\star}}^2. 
\end{array}
}
Since, almost surely, the limit on the left-hand side of this expression exists, the limit of the first two terms on the right-hand side is zero due to \eqref{eq:lmA3_proof2} and \eqref{eq:lmA3_proof3}, respectively.
The limit of the third term $t_k^2 \norms{x^k - x^{k-1}} ^2$ on the right-hand side is zero almost surely due to the last line of \eqref{eq:ARCOG_almost_sure}.
Therefore, we conclude that $\lim_{k\to\infty} \norms{x^{k-1} - x^{\star}}^2 $ exists almost surely, which proves our claim.
\Eproof
\end{APPENDICES}

\bibliographystyle{informs2014} 

\end{document}